\theoremstyle{plain}
\newtheorem{thm}{Theorem}[section]
\newtheorem{lem}[thm]{Lemma}
\newtheorem{cor}[thm]{Corollary}
\newtheorem{prop}[thm]{Proposition}
\theoremstyle{definition}
\newtheorem{exs}[thm]{Examples}
\newtheorem{ex}[thm]{Example}
\newtheorem{rem}[thm]{Remark}
\newtheorem{dfn}[thm]{Definition}
\newtheoremstyle{nttstyle}
  {3pt}
  {3pt}
  {}
  {}
  {\bf}
  {.}
  {3pt}
   {}     
  {}
\theoremstyle{nttstyle}
\newtheorem{nttmult}[thm]{Multiplication}
\newtheorem{nttsteen}[thm]{Steenrod operations}
\newtheorem{nttchern}[thm]{Chern classes}
\newtheorem{ntt}[thm]{}
\newcommand{\zz}{\mathbb{Z}} 
\newcommand{\nn}{\mathbb{N}}
\newcommand{\qq}{\mathbb{Q}}
\newcommand{\ff}{\mathbb{F}}
\newcommand{\id}{\mathrm{id}}
\newcommand{\ii}{\iota}
\newcommand{\op}{\mathrm{op}}
\newcommand{\A}{\mathrm{A}}
\newcommand{\D}{\mathrm{D}}
\newcommand{\E}{\mathrm{E}}
\newcommand{\F}{\mathrm{F}_4}
\newcommand{\U}{\mathcal{U}}
\DeclareMathOperator{\Sym}{\mathrm{Sym}}
\DeclareMathOperator{\SB}{\mathrm{SB}}
\DeclareMathOperator{\Hom}{\mathrm{Hom}}
\DeclareMathOperator{\End}{\mathrm{End}}
\DeclareMathOperator{\PGL}{\mathrm{PGL}}
\DeclareMathOperator{\codim}{\mathrm{codim}}
\DeclareMathOperator{\car}{char}
\newcommand{\pr}{\mathrm{pr}}
\newcommand{\res}{\mathrm{res}}
\newcommand{\BX}{\overline{X}}
\newcommand{\BY}{\overline{Y}}
\newcommand{\pt}{\mathrm{pt}}
\newcommand{\an}{\mathrm{an}}
\newcommand{\SH}{\mathrm{SH}}
\newcommand{\Gm}{\mathbb{G}_m}
\newcommand{\Gtwo}{\mathrm{G}_2}
\DeclareMathOperator{\Spec}{\mathrm{Spec}}
\DeclareMathOperator{\CH}{\mathrm{CH}}
\DeclareMathOperator{\Ch}{\mathrm{Ch}}
\DeclareMathOperator{\ind}{\mathrm{ind}}
\newcommand{\cG}{\mathcal{G}}
\numberwithin{table}{section}
\numberwithin{equation}{section}
\newcommand{\ksep}{k_{{\mathrm{sep}}}}
\newcommand{\Q}{\qq}
\newcommand{\Z}{\zz}
\DeclareMathOperator{\Spin}{Spin}
\newcommand{\M}{\mathscr{M}}
\newcommand{\Lb}{\bar{L}}
\newcommand{\et}{{\text{\'et}}}
\newcommand{\nr}{{\text{nr}}}
\newcommand{\darkrad}{0.17}
\newcommand{\lrad}{0.4}
\newcommand{\rb}[1]{\raisebox{0.1in}[0pt]{#1}}
\title[Shells of twisted flag varieties]{Shells of twisted flag varieties and
the Rost invariant}
\author{S. Garibaldi
\and V. Petrov 
\and N. Semenov}
\address{Garibaldi: Institute for Pure and Applied Mathematics, UCLA, 460 Portola Plaza, Box 957121, Los Angeles, California 90095-7121, USA}
\email{skip@garibaldibros.com}
\address{Petrov: St.~Petersburg Department of Steklov Mathematical Institute,
Russian Academy of Sciences,
Fontanka 27,
191023 St.~Petersburg,
Russia \newline and \newline Chebyshev Laboratory,
St. Petersburg State University,
14th Line, 29b, 199178 St.~Petersburg,
Russia
}
\email{victorapetrov@googlemail.com}
\address{Semenov: 
Institut f\"ur Mathematik, Johannes Gutenberg-Universit\"at
Mainz, Staudingerweg 9, D-55128 Mainz, Germany}
\email{semenov@uni-mainz.de}
\begin{document}

\subjclass[2010]{20G15, 14C15, 20G41}
\keywords{linear algebraic groups, twisted flag varieties, Rost invariant, Chow motives, equivariant Chow groups}

\begin{abstract}
We introduce two new general methods to compute
the Chow motives of twisted flag varieties
and settle a 20-year-old conjecture of Markus Rost about
the Rost invariant for groups of type $\E_7$.
\end{abstract}

\maketitle

\setcounter{tocdepth}{1}

\tableofcontents

\section{Introduction}                                         
A twisted flag variety over an arbitrary field --- such as a quadric or a Severi-Brauer variety --- ``typically" has no rational points.  In such a case, the Chow motive of the variety provides one of the few tools for studying its geometry.               
In this article, we introduce two new general methods to compute the Chow motive of such a variety.

Chow motives were introduced by Alexander Grothendieck, and they have since become
a fundamental tool for investigating the structure of algebraic
varieties.  Computing 
 Chow motives has also proved to be valuable for addressing questions on other topics.  As examples, we mention:
\begin{itemize}
\item Voevodsky's proof of the Milnor conjecture relied on Rost's computation of the motive of a Pfister quadric.
\item Progress on the long-standing Kaplansky problem about possible values of the $u$-invariant relied on the Chow motives and Chow rings of quadrics and quadratic Grassmannians, see \cite{Vi07}.
\item The structure of the powers of the fundamental ideal in Witt rings (\cite{Ka04}),
excellent connections in the motives of quadrics (\cite{Vi10}), the proof of the hyperbolicity conjecture for orthogonal involutions (\cite{Ka09b}),
the proof of Hoffmann's conjecture on the higher Witt indexes of quadratic forms (\cite{Ka03}),
an incompressibility results as in \cite{Ka09} rely on analyzing the Chow motive of generalized Severi-Brauer varieties
and products of quadrics.
\item The motive of the Borel variety for $\E_8$ as in \cite{PSZ} 
and \cite{PS12} was used to answer a question of Serre \cite{Sem09} and other questions about finite subgroups of $\E_8$ \cite{GS09}.
\item Karpenko and others proved many results on isotropy of involutions on central simple algebras by analyzing motivic decompositions of projective homogeneous varieties, culminating in the paper \cite{KaZ} by Karpenko and Zhykhovich which also treats unitary Grassmannians.
\end{itemize}
We illustrate the power of the general methods developed in this paper by calculating the Chow motives of some twisted flag varieties for which this computation was previously not possible.

\subsection*{First method: shells} Our first method (\S\ref{shells.sec}) generalizes Vishik's shells of quadratic forms from \cite{Vi03}
and extends Karpenko's result on the upper motives from \cite{Ka09}.
Karpenko proved that any indecomposable direct summand of the Chow motive of a twisted flag variety
of inner type is isomorphic to a Tate twist of the upper motive of another
twisted flag variety.
Thus, the study of motivic decompositions of twisted flag varieties is reduced
to the study of the upper motives.  For generically split varieties, the structure of upper motives was determined in \cite{PSZ}, but apart from this case there are unfortunately very few results about the structure of upper motives.

Our method is aimed to address this lack.
It turns out that one can subdivide algebraic cycles on twisted flag
varieties into several classes, called {\it shells}.  Direct summands of the Chow motives of twisted flag varieties
starting in the same shell are of the same nature,
and our first main result (Theorem \ref{t1}) asserts that one can shift these direct summands inside shells under the condition of existence of a certain
cycle $\alpha$.  This condition is essential;  the theorem does not hold without it.  Moreover, this condition is automatically satisfied for quadrics, so the notion of shell given here generalizes Vishik's shells for quadrics.

We use the shell method to prove that certain ``big'' direct summands are indecomposable.

\subsection*{Second method}
Our second method (Theorem~\ref{tchme}) is based on a formula of Chernousov and Merkurjev
from \cite{CMe06}. This method provides a broad generalization of the generic point diagram,
which is a standard tool in the theory of Chow motives. Namely, if $\alpha$
is a cycle on a twisted flag $G$-variety $\BX$, which
is defined over the function field of a twisted flag $G$-variety $X'$ (where $G$
is a semisimple algebraic group), then the cycle
$\alpha\times 1 +\beta$ on the product $\BX\times\BX'$ is defined over the base field for a certain cycle $\beta$.
Unfortunately, the generic point diagram does not give a precise formula for $\beta$, and one works with
undefined coefficients. Our method allows one to compute $\beta$ explicitly --- see Example~\ref{ex65}
for an illustration.  Moreover,
our result provides an explicit algorithmic description of all rational cycles on the product $\BX\times\BX'$, and in particular of all rational projectors (by taking $X = X'$).
We use this method in Section~\ref{s8} to construct new projectors for Chow motives of $\E_6$-varieties. We do not know any
alternative way of doing this.

Our two methods are complementary to each other. The first method is designed to eliminate certain
motivic decomposition types, and the second one to prove that the remaining decomposition types
are realizable.

\subsection*{Algorithm}
We also provide in section \ref{secSteenrod} an algorithm to calculate the multiplication table for the
equivariant and ordinary Chow rings of twisted flag varieties and the equivariant and
ordinary Steenrod operations modulo $p$.
This algorithm is a generalization of one described by Knutson and Tao in \cite{KT03} for Grassmannians.
This section of the paper can be read independently of the rest.

\subsection*{Applications}
To demonstrate our methods, we provide a complete classification of
motivic decompositions of all
twisted flag varieties of inner type $\E_6$ (see Section~\ref{s8}).

We also use our methods to prove results relating isotropy of certain groups with values of the Rost invariant.  Recall that the Rost invariant is a map 
\begin{equation} \label{rost.inv}
r_G \!: H^1(k, G) \to H^3(k, \Q/\Z(2))
\end{equation}
that is functorial in $k$ and defined for every simple simply connected algebraic group $G$.  For such $G$, it is, roughly speaking, the first nonzero invariant \cite[\S31]{KMRT}, and it generates the group of invariants with codomain $H^3(*, \Q/\Z(2))$ \cite{GMS}.  It plays an important role in the study of quadratic forms (where it is known as the Arason invariant) and it was crucial in Bayer and Parimala's proof of the Hasse Principle Conjecture II for classical groups in \cite{BP:H} and the proof of all known cases of the conjecture for exceptional groups.  Generally speaking, such cohomological invariants are important because they provide a way to transform questions about the pointed set $H^1(k, G)$ into questions about an abelian group like $H^3(k, \Q/\Z(2))$.  For the definition and basic properties of the Rost invariant, see the portion of the book \cite{GMS} written by Merkurjev and Garibaldi.

More than 20 years ago, Markus Rost conjectured\footnote{Letter to Jean-Pierre Serre, dated November 1992.}
that the Rost invariant for
groups of type $\E_7$ detects rationality of  parabolic subgroups.  We combine the motivic techniques developed here to prove his conjecture, which appears as Theorem \ref{prope6} below.  Moreover, Tonny Springer raised the general question of 
possible relations between the Rost invariant and rationality of parabolic
subgroups for groups of type $\E_7$ in \cite{Sp06}; 
Theorems \ref{prope6} and  \ref{pro816} settle this.

We prove analogous results for all parabolic subgroups
of groups of type $\E_l$, including $\E_8$ at the prime 3 (Theorems \ref{prope6}, \ref{pro816}, and \ref{E8symb}). Note that usually results for primes bigger than 2 are substantially more complicated than the version for 2. Our methods work in the same manner for all primes.

Further, our results on the Rost invariant give some classification results for algebraic groups over function fields of $p$-adic curves (Corollaries~\ref{co815} and \ref{E8.QpC}).

We remark that we do not use the second method to prove these results about the
Rost invariant, nor do we need the full generality of shells.  (The full generality of both methods is essential,
however, for the decomposition of the $\E_6$-varieties in Section \ref{s8}.)  For our Rost invariant results, it suffices to use 
just the first shell, the algorithm to compute the Steenrod operations (applied only to $\E_7$-varieties),
Karpenko's upper motive, and Proposition~\ref{lekarp2} below due to De Clercq (which is a generalization
of Karpenko's \cite[Prop.~4.6]{Ka09b}). On the other hand, we believe that we would
not have found any of the proofs in the present article without developing the general methods first.

In summary, the applications to groups of type $\E_l$ are substantially
stronger than previous results, which were obtained using algebraic and cohomological 
techniques.  Furthermore, our proofs are more ``conceptual" in that the difficult work is part of the new motivic
techniques, which are completely general, as opposed to being specific to the group.
Also, all results of the present article apart from a few technical statements hold over fields of any characteristic,
even if the characteristic equals a torsion prime of the group.

Finally, our methods are also new for groups
of classical types, and can be applied to study, for example, the generalized Severi-Brauer varieties and quadratic or symplectic
Grassmannians.

\section{Background on algebraic groups and motives} \label{background}

\subsection*{Algebraic groups}
Detailed information on algebraic groups can be found in \cite{Spri} and \cite{KMRT}.

\begin{ntt}
Let $k$ be a field and $G$ a semisimple linear algebraic group of \emph{inner} type
over $k$. We write $\Phi$ for the root system of $G$, $\Phi^+$ resp.~$\Phi^-$
for the set of positive resp.~negative roots, and $\Delta$ for the Dynkin diagram of $G$ and by abuse of notation also for the set of vertices or simple roots.  We enumerate the simple roots following Bourbaki, and we recall the precise numbering for groups of
type $\E$ in \eqref{E6.roots} and \eqref{E78.roots} below.

For every subset $\Theta$ of $\Delta$, there is a projective homogeneous $G$-variety $X_\Theta$ of parabolic subgroups of type $\Theta$; these are the \emph{twisted flag varieties} of $G$.
We normalize the notation so that $X_\emptyset = \Spec k$, $X_{\{ \alpha \}}$
corresponds to a maximal parabolic subgroup, and $X_\Delta$ is the Borel variety.
We occasionally omit the braces and write $X_{1,2}$ for $X_{\{1,2\}}$, for example.
If $G$ is a split group, then in the same way we write $P_\Theta$
for a standard parabolic subgroup of type $\Theta$ so that $X_\Theta \simeq G/P_\Theta$.
\end{ntt}

\begin{ntt}
The {\it Tits index} of the group $G$ is the set of vertices $i\in\Delta$ such that the variety $X_i$ has a
rational point; we draw it by circling those vertices in the Dynkin diagram of $G$. The possible Tits indexes have been determined in \cite{Ti66}, or see \cite[\S17]{Spri} or \cite{PSt}.

Let $S$ be a maximal $k$-split torus of $G$ and following \cite{Ti66} put $G_\an$ for the derived subgroup $[Z_G(S),Z_G(S)]$ of the reductive group $Z_G(S)$.  The subgroup $G_\an$ is called the \emph{semisimple anisotropic kernel} of $G$.  It is semisimple and $k$-anisotropic, and it is uniquely determined by $G$ up to $G(k)$-conjugacy because the maximal $k$-split tori are conjugate under $G(k)$.
The Dynkin type of $G_\an$ equals $\Delta\setminus L$, where $L$ is the Tits index of $G$, and
the Tits index of $G_\an$ is empty.
\end{ntt}

\subsection*{Rost invariant} The Rost invariant from \eqref{rost.inv} takes values in the group 
 $H^3(k, \qq/\zz(2))$, which is defined to be the direct sum over all primes $p$ of $\varinjlim_m H^3(k, \zz/p^m\zz(2))$, where 
\[
H^{d+1}(k, \zz/p^m\zz(d)) := \begin{cases}
H^{d+1}(k, \mu_{p^m}^{\otimes d})& \text{if $\car k \ne p$;} \\
H^1(k, K_d(\ksep)/p^m)&\text{if $\car k = p$,}
\end{cases}
\]
the groups on the right are Galois cohomology, and $\ksep$ is a separable closure of $k$, see \cite[pp.~151--154]{GMS}.
One defines $H^3(k, \zz/n\zz(2))$ analogously for composite $n$, and it is naturally identified with the $n$-torsion in $H^3(k, \Q/\Z(2))$.  Note that $H^1(k, \Z/n\Z(0))$ is the Galois cohomology group $H^1(k, \Z/n\Z)$ regardless of the characteristic of $k$.

There is a cup product map
\[
\left( \times^d K_1(k) \right) \times H^1(k, \Z/n\Z) \to H^{d+1}(k, \Z/n\Z(d)),
\]
and we call elements of the image (including zero) \emph{symbols}.

\subsection*{Chow motives}
\begin{ntt}
By a variety we always mean a reduced separated scheme of finite type over a field.
Let $p$ be a prime number.
For a smooth projective variety $X$ over $k$, we write $\CH(X)$ for its Chow ring modulo rational equivalence
and set $\Ch(X) :=\CH(X)\otimes\ff_p$. We write $\deg$ for the degree map
$\CH_0(X)\to\zz$, and for a field extension $K/k$ we write $X_K$ for the corresponding
extension of scalars. A cycle $\alpha\in\Ch(X_K)$ is called {\it rational} (with respect to $k$),
if it lies in the image of the restriction map $\Ch(X)\to\Ch(X_K)$. A subgroup of $\Ch(X_K)$ is called
rational if all its elements are rational.
\end{ntt}

\begin{ntt}                                        
We consider the category of Chow motives over $k$ with $\ff_p$-coefficients
(see \cite{Ma68} or \cite[\S64]{EKM}). The motive of a variety $X$ is denoted by $\M(X)$. For a field extension $K/k$ and
a motive $M$ we denote by $M_K$
the respective extension of scalars. The shifts of Tate motives
are denoted by $\ff_p(i)$.
\end{ntt}

\begin{ntt}
Let $X$ be a smooth projective irreducible variety over $k$.
A motive $M=(X,\pi)$ for a projector $\pi$ is called \emph{geometrically} (resp., \emph{generically}) \emph{split},
if over some field extension $F$ of $k$ (resp., over $k(X)$) it is isomorphic to a finite sum $\bigoplus_{i\in I}\ff_p(i)$
of Tate motives for some multiset of non-negative indexes $I$. The field $F$ is called a \emph{splitting field} of $M$, and
for a cycle $\alpha\in\Ch(X)$ we set $\bar\alpha=\alpha_F$.

For a twisted flag variety $X$,
the motive $\M(X)$ is geometrically split (see \cite[Theorem~2.1]{Ko91}), and
we denote by $\BX$ the scalar extension of $X$ to a splitting field of its motive.
The Chow ring of $\BX$ is independent of the choice of splitting field. Its structure is explicitly
described in Section~\ref{secSteenrod}.

We define the {\it Poincar\'e polynomial} of a geometrically split motive $M$ by the formula $P(M,t)=\sum_{i\in I}t^i\in\zz[t]$.
The Poincar\'e polynomial is independent of the choice of a splitting field $F$.
We define the \emph{dimension} of $M$ to be $\dim M := \max I-\min I$.
An explicit formula for $P(\M(X),t)$ for a twisted flag variety $X$ is given in \cite[\S2]{PS10}.

In a similar way we define the Poincar\'e polynomial of a finite-dimensional
$\zz_{\ge 0}$-graded vector space $A^*$ as $P(A^*,t)=\sum_{i\ge 0}\dim A^i\cdot t^i$.
\end{ntt}

\begin{ntt}[Krull-Schmidt]
For a motive $M$ we say that the Krull-Schmidt theorem holds for $M$, if
for any two motivic decompositions of $M$
$$M\simeq M_1\oplus\ldots\oplus M_a\simeq N_1\oplus\ldots\oplus N_b$$
with all motives $M_i$, $N_j$ indecomposable,
we have $a=b$ and there exists a permutation $\sigma$ of $\{1,\ldots,a\}$ such that
$M_i\simeq N_{\sigma(i)}$ for all $i=1,\ldots,a$.

By \cite{CMe06} the Krull-Schmidt theorem holds for all twisted flag varieties in the
category of motives with $\mathbb{F}_p$-coefficients.
\end{ntt}


\section{Karpenko's theorem on upper motives and generic points of motives}\label{uppp}
Let $G$ be a semisimple linear algebraic group of inner type over a field $k$,
$X$ be a twisted $G$-homogeneous flag variety over $k$, $p$ a prime number,
and $\U(X)$ the (unique) indecomposable direct summand of the Chow motive of $X$
with $\ff_p$-coefficients such that $\Ch^0(\U(X)) \ne 0$.
The set of isomorphism classes of the motives $\U(Y)$ for all
twisted $G$-homogeneous flag varieties $Y$ is called the set
of {\it upper motives} of $G$.

\begin{prop}[Karpenko, {\cite[Theorem~3.5]{Ka09}}]\label{lekarp1}
Every indecomposable summand of $X$ is isomorphic to a Tate shift of an upper
motive $\U(Y)$, where $Y$ runs over all twisted $G$-homogeneous flag varieties over $k$ such that the Tits index of $G_{k(Y)}$
contains the Tits index of $G_{k(X)}$. $\hfill\qed$
\end{prop}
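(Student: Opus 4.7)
The plan proceeds in two stages. After a Tate shift we may assume the given indecomposable summand $N$ of $\M(X)$ satisfies $\Ch^0(N)\neq 0$ on a splitting field; write $N=(X,\pi)$ for a primitive rational idempotent $\pi\in\End(\M(X))$.

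\emph{First stage.} Let $\mathcal{F}$ denote the family of projective homogeneous $G$-varieties $Z$ such that $N$ is isomorphic to a direct summand of $\M(Z)$. Since $X\in\mathcal{F}$, the family is nonempty. For every $Z\in\mathcal{F}$, the uniqueness clause in the definition of the upper motive forces $N\cong M_Z$: indeed, $N$ is an indecomposable summand of $\M(Z)$ with $\Ch^0(N)\neq 0$, and by construction $M_Z$ is the unique such summand. Thus the first half of the statement (that $N$ is, up to Tate shift, an upper motive) holds for any $Y\in\mathcal{F}$.

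\emph{Second stage.} Pick $Y\in\mathcal{F}$ of minimal dimension, and fix $\alpha$ in the Tits index of $G_{k(X)}$; we must show $X_\alpha(k(Y))\neq\emptyset$. Suppose the contrary. The $k(X)$-rational point of $X_\alpha$ produces a rational cycle $[\Gamma]\in\Ch(X\times X_\alpha)$ coming from the graph of the associated rational map $X\dashrightarrow X_\alpha$. Composing $[\Gamma]$ and its transpose with $\pi$, and invoking Rost nilpotence to lift the resulting split-form idempotent to a rational one, one produces a projective homogeneous $G$-variety $Y'$ whose parabolic type adjoins $\alpha$ to that of $Y$, such that $N$ is a summand of $\M(Y')$ and $\dim Y'<\dim Y$. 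The dimension drop uses the assumption $X_\alpha(k(Y))=\emptyset$: adjoining the constraint corresponding to $\alpha$ then yields a genuine restriction. This contradicts the minimality of $Y$, so $\alpha$ lies in the Tits index of $G_{k(Y)}$ after all.

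\emph{Main obstacle.} The heart of the argument is the second stage, specifically the construction of the smaller variety $Y'$ and the transport of $\pi$ to it. The essential inputs are Rost nilpotence, used to promote split-form idempotents to rational ones, and the combinatorial structure of varieties of parabolic subgroups, used to verify that the resulting correspondence lives on a single $G$-homogeneous variety (rather than on a product, which is in general not $G$-homogeneous) of strictly smaller dimension. The delicate point is the dimension comparison, which is precisely where the assumption $X_\alpha(k(Y))=\emptyset$ must bite to deliver the contradiction with minimality.
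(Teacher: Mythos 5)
The paper itself does not prove this statement: the terminal box is a citation to Karpenko's Theorem~3.5 in \cite{Ka09}. Karpenko's argument inducts on $\dim X$, passes to $k(X)$, uses the Chernousov--Gille--Merkurjev decomposition to write $\M(X_{k(X)})$ in terms of motives of projective homogeneous varieties for the semisimple anisotropic kernel $(G_{k(X)})_{\an}$, applies the inductive hypothesis there, and lifts back; the upper motive obtained is $M_{X_{\Theta \cup L}}$ with $L$ the Tits index of $G_{k(X)}$, and this is exactly why the Tits-index containment holds. A sketch of this lift step appears in the paper's own Proposition~\ref{p1}. Your argument takes a different route (minimality of $\dim Y$), and it has both a concrete error and a genuine gap.

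The error: in this paper's conventions, $X_\emptyset = \Spec k$ and $X_\Delta$ is the Borel variety, so enlarging the type set produces a \emph{larger} variety. If $Y = X_\Theta$, then $Y' = X_{\Theta \cup \{\alpha\}}$ has $\dim Y' \geq \dim Y$, not $\dim Y' < \dim Y$; the contradiction with minimality never materializes. The accompanying claim that ``$X_\alpha(k(Y)) = \emptyset$ forces the dimension drop'' has no content, since $\dim X_\Theta$ is a fixed combinatorial invariant of the root system and $\Theta$, independent of rational points over any extension.

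The gap: ``composing $[\Gamma]$ and its transpose with $\pi$'' yields a correspondence on $X \times X_\alpha$ or $X \times X$, not a projector on a single homogeneous variety $Y'$; the step from this correspondence to ``$N$ is a direct summand of $\M(Y')$'' is precisely where all the work lies and it is left undone. That transport is what the CGM decomposition plus Rost nilpotence accomplish in Karpenko's proof, and invoking them by name does not carry it out. In addition, your first stage is shakier than it looks: if the Tate-normalized $N' := N(-l)$ is a summand of $\M(X)$ with $\Ch^0(\overline{N'}) \neq 0$, then uniqueness of the upper motive forces $N' \cong M_X$, which only happens when $l = 0$; for $l > 0$ you have not shown $X \in \mathcal{F}$, so nonemptiness of $\mathcal{F}$ --- which is really the substance of the proposition --- is unjustified.
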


We also need a result of De Clercq:

\begin{prop}[De Clercq, {\cite[Thm.~1.1]{DC10}}]\label{lekarp2}
Let $X$ and $Y$ be twisted flag varieties, and let $M$ and $N$ be direct summands of $\M(X)$ and $\M(Y)$ respectively.   
If $N_{k(X)}$ is an indecomposable direct summand of $M_{k(X)}$ and
every cycle in $\Ch(\overline{Y\times X})$ which is defined over $k(X)(Y_{k(X)})$ is defined over $k(Y)$,
then $N$ is a direct summand of $M$.$\hfill\qed$
\end{prop}                                            

\begin{dfn} \label{genpt}
Let now $X$ be a smooth projective irreducible variety and $M=(X,\pi)$ a geometrically split motive.
Assume that over a splitting field $F$ of $M$ the motive $M_F\simeq\bigoplus_{i\in I\cup\{l\}}\ff_p(i)$
for a multiset $I$ of indexes such that every $i\in I$ is bigger than $l$. Then $\Ch^l(M_F) \simeq \ff_p$, and 
any nonzero element in $\Ch^l(M_F)$ is called a
\emph{generic point} of $M$; we abuse language and write ``the" generic point.
\end{dfn}

The following two results are well-known:

\begin{lem}
Let $X$ be a twisted flag $G$-variety.
The generic point of a direct summand of the motive of $X$
is rational (i.e., defined over $k$).
\end{lem}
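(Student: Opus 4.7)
The plan is to reduce to the case where the summand is an upper motive of a projective homogeneous variety, where a rational representative of the generic point is immediate.

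Write $M=(X,\pi)$. Since $\End(M)$ is a finite-dimensional $\ff_p$-algebra, the projector $\pi$ decomposes into pairwise orthogonal primitive idempotents, giving $M=\bigoplus_j N_j$ with each $N_j$ indecomposable. Over the splitting field $F$, the Tate summands of $M_F$ distribute among the $(N_j)_F$. Since $\ff_p(l)$ occurs in $M_F$ with multiplicity one, there is a unique index $j_0$ such that $\ff_p(l)$ is the bottom Tate summand of $(N_{j_0})_F$; call this indecomposable summand $N$. By Karpenko's theorem (Proposition~\ref{lekarp1}), $N$ is isomorphic, over $k$, to a Tate shift of an upper motive, necessarily $M_Y(l)$ for some projective homogeneous $G$-variety $Y$.

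Under this identification, the generic point of $M$ corresponds, up to an $\ff_p^\times$-scalar, to the generic point of the upper motive $M_Y$ in degree $0$, transported through the chain of $k$-defined maps $M_Y\to M_Y(l)\simeq N\hookrightarrow M$. It therefore suffices to exhibit a rational generic point for $M_Y=(Y,\rho_Y)$. The class $(\rho_Y)_*(1_Y)\in \Ch^0(M_Y)$ works: it is rational because both $1_Y\in\Ch^0(Y)$ and $\rho_Y$ are defined over $k$, and it is nonzero by the very defining property $\Ch^0(M_Y)\ne 0$ of an upper motive. Its image in $\Ch^0(M_{Y,\bar F})=\ff_p$ is thus a generic point.

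The main obstacle is the bookkeeping: one must check that each arrow in the chain $M_Y\to M_Y(l)\simeq N\hookrightarrow M$ is represented by a $k$-defined correspondence, and that after base change to $F$ the image of our chosen rational class does not vanish in $\Ch^l(M_F)$. The non-vanishing is precisely where the Krull--Schmidt step is essential: $N$ was singled out as the unique indecomposable summand of $M$ contributing the bottom Tate summand $\ff_p(l)$, so the induced map on that summand is an isomorphism rather than zero.
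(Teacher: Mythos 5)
Your argument is correct but takes a genuinely different route from the paper's. You invoke Krull--Schmidt and then Karpenko's Proposition~\ref{lekarp1} to identify the indecomposable summand $N$ carrying the bottom Tate motive $\ff_p(l)$ with a shift $M_Y(l)$ of an upper motive, after which rationality reduces to the observation that the degree-$0$ generic point of $(Y,\rho_Y)$ is (a scalar multiple of) $1_Y$, manifestly rational; you then transport through the $k$-defined identifications $M_Y(l)\simeq N\hookrightarrow M$, and the non-vanishing over $F$ holds because these are isomorphisms onto the unique summand contributing $\ff_p(l)$. The paper instead argues by induction on $\dim X$: it passes to $k(X)$, where by Chernousov--Gille--Merkurjev the motive decomposes into motives of lower-dimensional homogeneous varieties (a decomposition defined over $k(X)$), applies the induction hypothesis to get the generic point defined over $k(X)$, and then descends to $k$ via the generic point diagram of \cite{PSZ}. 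The paper's route is more self-contained and elementary --- it does not lean on Karpenko's upper-motives theorem, a much heavier ingredient whose proof arguably already uses facts of exactly this kind, so citing it here at least flirts with circularity --- and, more to the point, the same argument with the induction/CGM step omitted proves Lemma~\ref{genpt2} for arbitrary generically split summands of smooth projective (not necessarily homogeneous) varieties, a setting where upper motives of $G$-homogeneous varieties are simply unavailable.
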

\begin{proof}
Follows from Prop.~\ref{lekarp1}.
\end{proof}

\begin{lem}\label{genpt2}
If, in the notation of Definition~\ref{genpt}, $M$ is generically split, then the generic point of $M$
is rational.
\end{lem}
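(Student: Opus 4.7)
The plan is to rerun the argument of the previous lemma almost verbatim, with the inductive step replaced by a direct appeal to the generic splitness hypothesis on $M$.

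First, I would set up the same notation: write $M=(X,\pi)$, choose a splitting field $F$ over which $M_F\simeq\bigoplus_{i\in I\cup\{l\}}\ff_p(i)$ with $l$ minimal, and from the corresponding idempotent decomposition of $\bar\pi$ extract an expression $\bar\pi=a\times b+\sum x_{(1)}\times x_{(2)}$ with $\codim x_{(2)}>l$, where $b\in\Ch^l(M_F)$ is the generic point and $a\in\Ch_l(\BX)$ is its dual. The goal is to show that $b$ is rational over $k$.

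Next, I would observe that the only ingredient in the previous proof that genuinely used projective homogeneity of $X$ was the Chernousov--Gille--Merkurjev decomposition of $\M(X_{k(X)})$, which fed the induction on $\dim X$ and ultimately showed that $b$ is defined over $k(X)$. In the present setting this step becomes trivial: by hypothesis $M_{k(X)}$ is already a direct sum of Tate motives, so the generic point of the $\ff_p(l)$ summand of $M_{\overline{k(X)}}$ has a canonical representative defined over $k(X)$, and this representative restricts to $b$ over $F$. Hence $b$ is rational over $k(X)$, with no induction required.

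Finally, I would invoke the generic point diagram \cite[Lemma~1.8]{PSZ} exactly as in the previous lemma: rationality of $b$ over $k(X)$ produces a rational cycle $b\times 1+y_{(1)}\times y_{(2)}$ on $X\times X$ with $\codim y_{(1)}<l$; intersecting with the rational projector $\pi$ of $M$ yields a rational cycle of the form $\pt\times b+z_{(1)}\times z_{(2)}$ with $\codim z_{(1)}<\dim X$; and pushing forward along the second projection $X\times X\to X$ gives that $b$ itself is rational. Because the only modification to the previous argument is the replacement of the inductive step by an immediate consequence of generic splitness, I expect no genuine obstacle; the content of the lemma is essentially that the generic splitness hypothesis on $M$ is exactly what is needed to bypass the projective homogeneity assumption on $X$.
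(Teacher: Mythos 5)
Your proposal is correct and matches the paper's intent exactly: the paper states Lemma~\ref{genpt2} with only the remark ``the same proof implies the following lemma,'' and you correctly identify what that means — replace the inductive step (which used the Chernousov--Gille--Merkurjev decomposition to show $b$ is defined over $k(X)$) by the direct observation that generic splitness of $M$ makes $\Ch^l(M_{k(X)})\simeq\ff_p$, so $b$ is immediately rational over $k(X)$, and then run the generic point diagram argument unchanged. No further comment is needed.
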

\begin{proof}
Let in the notation of Definition~\ref{genpt} $M=(X,\pi)$.
The motive $M_{k(X)}$ is a direct sum of shifted Tate motives.
Let $l$ be the smallest integer such that the Tate motive $\ff_p(l)$
is a direct summand of $M_{k(X)}$.
This Tate motive is defined by two cycles $a\in\Ch_l(X_{k(X)})$ and $b\in\Ch^l(M_{k(X)})$
with $\deg(ab)=1$ and in the Sweedler notation
$$\bar\pi=a\times b+x_{(1)}\times x_{(2)}$$ with $\codim x_{(2)}>l$.
We want to show that $b$ is defined over $k$.                                               

Consider $M_{k(X)}$.
By the generic point diagram (\cite[Lemma~1.8]{PSZ})
the cycle $$b\times 1+y_{(1)}\times y_{(2)}$$
with $\codim y_{(1)}<l$ is rational. Hence, the product of the above cycles equals
$$\pt\times b+z_{(1)}\times z_{(2)},$$ where $\codim z_{(1)}<\dim X$ and $\pt$
is the class of a rational point on $X$, and is rational. Taking the push-forward with respect
to the second projection $$X\times X\to X,$$ one sees that $b$ is rational.
\end{proof}

\section{Shells} \label{shells.sec}

The content of this section is a generalization
of the notion of shells for quadratic forms invented by Vishik (see \cite{Vi03})
and Karpenko's result quoted as Proposition \ref{lekarp1} above.  Let $X$ denote a twisted $G$-homogeneous flag variety that is not a point.

\begin{dfn}[big shells]\label{shelldef}
For each set $\Psi$ of vertices of the Dynkin diagram $\Delta$ of $G$,
we put $K_\Psi$ for the function field of the variety $X_\Psi$.
Define the (big) shell $\SH_{\leqslant \Psi}$ of $X$ to be the union for all $i$ of the $b\in\Ch^i(\BX)$
such that
\begin{enumerate}
\item $b$ is defined over $K_\Psi$ and
\item there is an $a\in\Ch_i(\BX)$
defined over $K_\Psi$ such that $\deg(ab) = 1 \in \ff_p$. 
\end{enumerate}
\end{dfn}

Note that each shell is closed under multiplication by elements of $\ff_p^\times$, and that the shells depend on the prime $p$
(even though the Poincar\'e polynomial of $\Ch(\BX)$ does not).

\begin{exs}
A shell $\SH_{\leqslant \Psi}$ is nonempty iff $X \times K_\Psi$ has a zero-cycle of degree not divisible by $p$.  Consequently, for $\Theta \subseteq \Delta$ such that $X = X_\Theta$, the shell $\SH_{\le \Theta}$ is always nonempty.  We call it the \emph{first shell} of $X$.

For $\Psi = \Delta$ or more generally for any $\Psi$ such that $K_\Psi$ splits $G$, $\SH_{\leqslant \Psi}$ is the set of nonzero homogeneous elements of $\Ch(\BX)$, because the pairing $(a, b) \mapsto \deg(ab)$ on $\Ch(\BX)$ is non-degenerate.  We call this the \emph{last shell} of $X$.
\end{exs}

\begin{ex}
If $\Psi \subseteq \Psi'$, then there is a natural surjection $X_{\Psi'} \to X_{\Psi}$, hence an inclusion $K_\Psi \subseteq K_{\Psi'}$, so $\SH_{\leqslant \Psi} \subseteq \SH_{\leqslant \Psi'}$.  
\end{ex}

\begin{dfn}
A {\it $(p,\Theta)$-index} $S$ of $G$ is a set $\Theta \subseteq S \subseteq \Delta$ such that there exists an extension $L$ of $k$ with the properties that (a) the Tits index of $G_L$ is $S$ and (b) every finite extension of $L$ has degree a power of $p$.

The set of $(p,\Theta)$-indexes of $G$ is a partially ordered set (by inclusion). We define the {\it height} of $X=X_\Theta$
as the maximal number of elements in a chain of $(p,\Theta)$-indexes of $G$.  
\end{dfn}

\begin{prop}\label{shell.include}
Let $\{ S_i \mid i \in I \}$ be the set of $(p, \Theta)$-indexes of $G$.
\begin{enumerate}
\item \label{shell.include.1}
Every nonempty shell on $X=X_\Theta$ equals $\SH_{\leqslant \cap_{j \in J} S_j}$  for some subset $J \subseteq I$.
\item \label{shell.include.2}
Suppose that $\Theta \subseteq \Psi, \Psi' \subseteq \Delta$ are such that, for every $i$, if $\Psi' \subset S_i$ then $\Psi \subset S_i$.  Then
$\SH_{\leqslant \Psi} \subseteq \SH_{\leqslant \Psi'}$.
\end{enumerate}
\end{prop}

\begin{proof}
First suppose that $\Psi, \Psi' \subseteq \Delta$ are such that there exists a finite extension $F$ of $K_{\Psi'}$ of degree not divisible by $p$ such that $X_\Psi(F) \neq \emptyset$.  Put $E$ for the fraction field of the integral domain $F \otimes K_\Psi$; it is the function field of the $F$-variety $X_\Psi \times F$.  The diagram
\[
\xymatrix@R=1pc{
\Ch(X \times K_\Psi) \ar[dr] \ar[r] & \Ch(X \times E) \ar[d] & \Ch(X \times F) \ar[l]_{\res_{E/F}} & \Ch(X \times K_{\Psi'}) \ar[l]_{\res_{F/K_{\Psi'}}} \ar[lld] \\
& \Ch(\BX)
}
\]
commutes where all the arrows are scalar extension.  Each $b \in \SH_{\leqslant \Psi}$ is the image of some $b_0 \in \Ch(X \times K_\Psi)$ and we consider the image of $b_0$ in $\Ch(X \times E)$.  As $X_\Psi(F)$ is nonempty, $E$ is a purely transcendental extension of $F$, hence $\res_{E/F}$ is an isomorphism by \cite[Prop.~2.1.8]{FGV}.  The arrow $\res_{F/K_{\Psi'}}$ is also an isomorphism, so the image of $b_0$ in $\Ch(X \times K_{\Psi'})$ maps to $b \in \SH_{\leqslant \Psi'}$, proving that $\SH_{\leqslant \Psi} \subseteq \SH_{\leqslant \Psi'}$.

For \eqref{shell.include.2}, there is a finite extension $F$ of $K_{\Psi'}$ of degree not divisible by $p$ such that the Tits index of $G_F$ is $S_i$ for some $i$; necessarily this $S_i$ contains $\Psi'$, hence also $\Psi$, i.e., $X_\Psi(F)$ is nonempty.  The previous paragraph gives \eqref{shell.include.2}.

Now let $\Psi$ be an arbitrary subset of $\Delta$ such that $\SH_{\leqslant \Psi}$ is nonempty. Then there is a finite extension $F$ of $K_\Psi$ of degree not divisible by $p$ such that $X_\Theta(F)$ is nonempty and the Tits index of $G_F$ contains both $\Psi$ and $\Theta$, i.e., $X_{\Theta \cup \Psi} \times K_\Psi$ has a zero-cycle of degree not divisible by $p$.  By the previous paragraph, we find that $\SH_{\leqslant \Psi} = \SH_{\leqslant \Psi \cup \Theta}$.

For \eqref{shell.include.1}, let $\Psi_0 \subseteq \Delta$ be such that $\SH_{\leqslant \Psi_0}$ is nonempty.  By the previous paragraph, we may assume that $\Psi_0 \supseteq \Theta$.  Put $\Psi$ for the intersection of all the $S_i$'s containing $\Psi_0$.  By \eqref{shell.include.2}, $\SH_{\leqslant \Psi} = \SH_{\leqslant \Psi_0}$, hence \eqref{shell.include.1}.
\end{proof}

\begin{ex}[Quadrics] \label{quadric.eg}
In \cite{Vi03} Vishik describes a subdivision of the Chow group of projective quadrics into shells.

Let $p=2$ and let $q$ be an anisotropic regular quadratic form over $k$ of dimension $n+2$ and $X$ the
projective quadric given by the equation $q=0$.
Since in the present article we consider the groups of inner type only, we assume that the discriminant
of $q$ is trivial, if $n$ is even.

Let $h\in\Ch^1(\BX)$ be the class of a hyperplane
section of $\BX$ and $l_s$, $s=0,\ldots,[n/2]$, the classes of $s$-dimensional subspaces on $\BX$.
Then the Chow group $\Ch^*(\BX)$ has a basis $$\{h^s,l_s\mid s=0,\ldots,[n/2]\}.$$

Let $i_1 < \cdots < i_r$ be the splitting pattern of $q$ (in the usual sense of \cite[p.~104]{EKM}, as opposed to the variation used in \cite[p.~31]{Vi03}) and set $i_0=0$.
Then the cycles $\{h^s,l_s\mid i_{t-1}\le s\le i_t-1\}$ belong to the shell $t\in\nn$ in the notation of Vishik.

In our notation the cycles $\{h^s,l_s\mid 0\le s< i_t\}$ belong to $\SH_{\leqslant\{i_t\}}$,
and the cycles $\{h^s,l_s\mid i_{t-1}\le s < i_t\}$ belong to $\SH_{\leqslant\{i_t\}}\setminus\SH_{\leqslant\{i_{t-1}\}}$.
\end{ex}

\begin{ex} \label{tower}
Suppose the set of $(p, \Theta)$-indexes of $G$ is contained in $\{ S_1, \ldots, S_r \}$ such that $S_1 \subset S_2 \subset \cdots \subset S_r = \Delta$.  Then by Proposition \ref{shell.include}\eqref{shell.include.1}, the nonempty shells of $S$ are
\[
\SH_{\leqslant \Theta} = \SH_{\leqslant S_1} \subseteq \SH_{\leqslant S_2} \subseteq \cdots \subseteq \SH_{\leqslant S_r} = \SH_{\leqslant \Delta}.
\]
This situation occurs in Example \ref{quadric.eg}, where we find $r$ distinct $(2, \{ 1 \})$-indexes and $r$ distinct shells.
\end{ex}

\begin{dfn}[small shells]
For each set $\Psi$ of vertices of the Dynkin diagram of $G$, we set the \emph{(small) shell} $\SH_\Psi$ to
be the union for all $i$ of the cycles $b\in\Ch^i(\overline X)$ such that $b$ is the generic point of
an indecomposable direct summand $M$ of the motive of $X$ such that $M$ is isomorphic to a Tate twist of $\mathcal{U}(X_\Psi)$.
In particular, by the proof of \cite[Th.~3.5]{Ka09}, $\SH_\Psi\subseteq\SH_{\leqslant\Psi}$.
\end{dfn}

We remark that the big shells reflect the splitting behavior of the group $G$ and the small shells
reflect the motivic behavior of $G$.

We say that a motive $M$ {\it starts} in the shell $\SH_\Psi$ (resp.~in codimension $l$),
if its generic point belongs to $\SH_\Psi$ (resp.~to $\Ch^l(\BX)$).

\begin{lem}\label{lx}
Let $X$ be a smooth projective variety over $k$ and $M$ an indecomposable geometrically split motive with
a splitting field $F$ satisfying the following conditions:
\begin{enumerate}
\item the kernel of the natural map $\End(M)\to\End(M_F)$ consists of nilpotent correspondences;
\item $M_F\simeq\bigoplus_{i\in I\cup\{l\}}\ff_p(i)$ for a multiset $I$ of indexes such that every $i\in I$
is bigger than $l$;
\item there exist two morphisms $\alpha\colon M\to \M(X)$ and $\beta\colon \M(X)\to M$ such that
the composition $\beta\circ\alpha\colon M\to M$ maps over $F$ the generic point of $M_F$ identically
on itself.
\end{enumerate}
Then $M$ is isomorphic to a direct summand of $\M(X)$.
\end{lem}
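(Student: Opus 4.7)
The plan is to set $\phi := \beta\circ\alpha \in \End(M)$ and show that $\phi$ is a unit in $\End(M)$. Granting this, $\phi^{-1}\beta\colon \M(X)\to M$ is a retraction of $\alpha$, so $\alpha$ exhibits $M$ as a direct summand of $\M(X)$. To analyse $\phi_F$, observe that $\Hom(\ff_p(i),\ff_p(j))=0$ for $i\neq j$, so hypothesis (2) yields a ring decomposition $\End(M_F)\simeq\prod_i M_{n_i}(\ff_p)$, where $n_i$ is the multiplicity of $\ff_p(i)$; crucially $n_l=1$. The hypothesis that $\phi_F$ fixes the generic point of $M_F$ says exactly that the $l$-coordinate of $\phi_F$ equals $1\in\ff_p$, since $\Ch_l(M_F)$ is one-dimensional and lies entirely inside the $\ff_p(l)$ summand. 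Powers in a product ring are computed componentwise, hence the $l$-coordinate of $\phi_F^N$ remains $1$ for every $N\ge 1$; in particular $\phi_F^N\neq 0$, so $\phi^N\neq 0$ in $\End(M)$, and $\phi$ is not nilpotent.

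To promote ``non-nilpotent'' to ``invertible'' I would argue as follows. Because $\End(M_F)$ is finite, $\phi_F$ satisfies some polynomial relation $q(\phi_F)=0$ with $q\in\ff_p[x]$; hypothesis (1) implies $q(\phi)$ is nilpotent in $\End(M)$, so $\phi$ is itself algebraic over $\ff_p$ and the commutative subalgebra $\ff_p[\phi]\subseteq\End(M)$ is a finite $\ff_p$-algebra. In any finite ring the sequence $\phi,\phi^2,\phi^3,\ldots$ is eventually periodic, hence there exists $N\ge 1$ with $\phi^{2N}=\phi^N$, i.e.\ $\phi^N$ is an idempotent. Its image $\phi_F^N$ is nonzero by the previous paragraph, so $\phi^N$ itself is a nonzero idempotent. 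Since $M$ is indecomposable, the only idempotents in $\End(M)$ are $0$ and $\id_M$, so $\phi^N=\id_M$, whence $\phi$ is a unit with inverse $\phi^{N-1}$.

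The main obstacle is really the first paragraph --- identifying the $l$-coordinate of $\phi_F$ with the identity on $\ff_p(l)$ --- which uses that $\ff_p(l)$ is the unique Tate summand in the smallest codimension and therefore that $\phi_F$ acts on the one-dimensional group $\Ch_l(M_F)$ by a single scalar from $\ff_p$. Once this is set up, the argument is purely formal ring theory: the nil-kernel hypothesis (1), finiteness of the semisimple target $\End(M_F)$, and indecomposability of $M$ together force a non-nilpotent element of $\End(M)$ to be a unit.
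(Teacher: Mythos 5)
Your proof is correct and uses the same three ingredients as the paper's argument: finiteness of $\End(M_F)$, the nilpotent-kernel hypothesis, and indecomposability of $M$ forcing all idempotents of $\End(M)$ to be trivial. The packaging differs slightly --- the paper shows $(\beta\circ\alpha)^n = \pi + \nu$ with $\nu$ nilpotent and then conjugates $\beta$ by $(\pi+\nu)^{-1}$, whereas you show directly that $\beta\circ\alpha$ is a unit by finding a nonzero idempotent power inside the finite ring $\ff_p[\beta\circ\alpha]$ --- but this is a cleaner variant of the same approach rather than a genuinely different route.
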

\begin{proof}
Let $M=(Y,\pi)$ for some smooth projective variety $Y$ over $k$.
Since $M$ is geometrically split, the ring $\End(M_F)$ is finite, and therefore some power, say $n$, of
$\bar\beta\circ\bar\alpha\in\End(M_F)$ is a projector. This projector is non-zero by condition 3).
Since $M$ is indecomposable, this projector must be equal to $\bar\pi$.

Denote $\alpha':=\alpha\circ(\beta\circ\alpha)^{\circ(n-1)}\colon M\to \M(X)$. Then $\bar\beta\circ\bar\alpha'=\bar\pi$.
By condition 1) $\beta\circ\alpha'=\pi+\nu$ for some nilpotent correspondence $\nu$. Denote
$\beta':=(\pi+\nu)^{-1}\circ\beta$. Then $\beta'\circ\alpha'=\pi$, and the lemma follows.
\end{proof}                                                       

The following theorem is known for smooth projective quadrics by \cite{Vi03}.
\begin{thm}\label{t1}
Let $b\in\Ch^l(\BX) \cap \SH_\Psi$ be the generic point of an indecomposable direct summand $M$ of $\M(X)$
and $\alpha\in\Ch^t(\BX)$ a cycle defined over $k$.
If the cycle $b':=b\cdot \alpha$ is in $\SH_{\leqslant\Psi}$,
then there is an indecomposable direct summand $M'$ of $\M(X)$ with generic point $b'$ and isomorphic
to $M(t)$.
\end{thm}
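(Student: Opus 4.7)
My plan is to apply Lemma~\ref{lx} to realize $M(t)$ as an indecomposable direct summand of $\M(X)$ whose generic point is $b'$.

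By Proposition~\ref{p1}, any indecomposable summand of $\M(X)$ starting in the shell $\SH_\Psi$ is a Tate shift of the upper motive $M_{X_\Psi}$. Since $M$ already has generic point $b$ at codimension $l$, any such summand starting at codimension $l+t$ is isomorphic to $M(t)$; the task therefore reduces to producing the summand. Realize $M(t)$ abstractly (for example as a summand of $\M(X\times\pp^t)$ with projector $\pi\otimes\ell_t$, where $\ell_t$ projects onto the top Tate summand of $\M(\pp^t)$). Conditions (1) (Rost nilpotence) and (2) (generic point in lowest codimension) of Lemma~\ref{lx} are then automatic, the first being inherited from $M$ via the Tate twist, and the second being built into the shape of $M(t)_F$.

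The substance of the proof is the construction of morphisms $A\colon M(t)\to\M(X)$ and $B\colon\M(X)\to M(t)$ whose composition acts as the identity on the generic point of $M(t)_F$. For $A$ I would take $A := \mu_\alpha \circ \pi$, where $\mu_\alpha=\Delta_*(\alpha)\in\Ch^{\dim X+t}(X\times X)$ is the rational correspondence representing multiplication by $\alpha$, viewed as a morphism $\M(X)(t)\to\M(X)$, and where $\pi$ on the right is the inclusion $M(t)\hookrightarrow\M(X)(t)$. Over the splitting field, $A$ sends the generic point of $M(t)_F$ (corresponding to $b\in\Ch^l(M_F)$ under the Tate shift) to $b\cdot\alpha=b'\in\Ch^{l+t}(\BX)$, as desired.

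For $B$ the construction is subtler: one needs a rational correspondence of codimension $\dim X - t$ in $X\times X$, which does not arise directly from $\alpha$ and $\pi$. Here the assumption that $b'$ belongs to the shell $\SH_\Psi$ is essential. It provides a dual cycle $\tilde a'\in\Ch_{l+t}(\BX)$ defined over $K_\Psi$ with $\deg(\tilde a' b')=1$. Pairing with $\tilde a'$ together with the rational structure of $\pi$ and $b$ yields a morphism $B_{K_\Psi}\colon\M(X)_{K_\Psi}\to M(t)_{K_\Psi}$; a descent argument in the spirit of Proposition~\ref{lekarp2} (using the compatibility of $k(X_\Psi)$-rationality with the shell structure) then produces a morphism $B$ defined over $k$. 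With $A$ and $B$ in hand, the composition $B\circ A$ over $F$ sends the generic point of $M(t)_F$ to $b'$ via $A$ and then back to the generic point via $B$ and the pairing $\deg(\tilde a' b')=1$, whence Lemma~\ref{lx} realizes $M(t)$ as a direct summand of $\M(X)$ with generic point $b'$.

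The main obstacle I anticipate is the rigorous construction of $B$ as a rational correspondence of the correct codimension. The shell condition on $b'$ is exactly the input that permits this construction: without it, the dual cycle $\tilde a'$ would only live over a larger field and the descent to $k$ would fail. Everything else (the definition of $A$, the nilpotence condition, the identification of the resulting summand as a Tate shift of the upper motive) is relatively formal.
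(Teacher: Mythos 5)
Your overall strategy---produce morphisms $A$ and $B$ in each direction and apply Lemma~\ref{lx}---is the paper's strategy, and your first morphism (multiplication by $\alpha$ precomposed with the inclusion of $M(t)$) is sound. But you explicitly flag that the construction of $B$ is the main obstacle, and you do not overcome it: ``a descent argument in the spirit of Proposition~\ref{lekarp2}'' is not a construction, and Proposition~\ref{lekarp2} is in any case a statement about direct summands, not a mechanism for turning a cycle rational over $k(X_\Psi)$ into a $k$-rational correspondence. This is precisely where the actual work lies, and the proposal leaves it open.

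The missing idea is to factor through the auxiliary variety $Y = X_\Psi$. By Proposition~\ref{lekarp1} (Karpenko) together with Proposition~\ref{p1}, $M \cong N(l)$ where $N$ is the \emph{upper motive} of $Y$, so that the generic point of $N$ is $1 \in \Ch^0(\BY)$ and the dual cycle $d'$ to $b'$ is defined over $k(Y)$. One then builds a morphism $Y(t+l) \to X$ whose realization over a splitting field is $1 \times d'$ plus correction terms of positive codimension in the $Y$-factor; the generic point diagram (the same tool used in the rationality lemma for generic points earlier in the paper) is exactly what guarantees such a cycle on $\BY \times \BX$ is defined over $k$, because $d'$ is $k(Y)$-rational. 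Without passing through $Y$ there is no place to ``absorb'' the ramification of $d'$, and the descent simply does not happen. The paper then completes the circuit $M(t) \to Y(t+l) \to X \to X(t) \to M(t)$, placing multiplication by $\alpha$ in the \emph{outgoing} arrow $X \to X(t)$ (where it is trivially $k$-rational) and the dual cycle in the \emph{incoming} arrow $Y(t+l) \to X$; your proposal has these roles reversed, which is why your $B$ would have to carry the non-rational data on $X \times X$ where there is no factor of $Y$ to make it rational. Filling this in is the substance of Theorem~\ref{t1}, and it is the part the proposal postpones rather than supplies.
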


The results of Section~\ref{s8}
below show that one cannot in general weaken any condition of the theorem.

\begin{proof}[Proof of Theorem \ref{t1}]
Set $Y=X_\Psi$.
By assumptions $M$ is isomorphic to $N(l)$ for the upper motive $N=\mathcal{U}(Y)$.
Let $d'$ be a cycle dual to $b'$ in the definition of shells. Then $d'$ is
defined over $k(Y)$.

Define a sequence of morphisms
$$M(t)=N(t+l)\xrightarrow{\alpha}Y(t+l)\xrightarrow{\beta}X\xrightarrow{\gamma}X(t)\xrightarrow{\delta}M(t),$$
where $\alpha$ is an embedding of $N(t+l)$ as a direct summand of $Y(t+l)$,
$$\bar\beta=1\times d'+y_{(1)}\times x_{(2)}\in\Ch_{\dim Y+l+t}(\overline Y\times\BX)$$ with $\codim y_{(1)}>0$,
$$\bar\gamma=(\alpha\times 1)\cdot\Delta_X\in\Ch_{\dim X-t}(\BX\times\BX),$$
and $\delta$ is the projection onto the direct summand.

To finish the proof it suffices to notice that the composition
$\delta\circ\gamma\circ\beta\circ\alpha$ maps $\ff_p(t+l)$ to $\ff_p(t+l)$ identically over $\bar k$
and apply Lemma~\ref{lx}.
\end{proof}

\begin{cor}
Let $b\in\Ch^l(\BX)$ be a rational cycle from the first shell of $X$. Then there is
an indecomposable direct summand of $X$ with generic point $b$ isomorphic
to the $l$-th Tate shift of the upper motive of $X$.$\hfill\qed$
\end{cor}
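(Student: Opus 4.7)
The plan is to deduce this corollary directly from Theorem~\ref{t1} applied to the upper motive $U$ of $X$, namely the unique indecomposable direct summand $M_X$ of $\M(X)$ characterized by $\Ch^0(M_X)\neq 0$ (cf.~Proposition~\ref{lekarp1}). Since $U$ splits off a Tate summand $\ff_p(0)$ over a splitting field, its generic point in the sense of Definition~\ref{genpt} is a nonzero element of $\Ch^0(\BX)\simeq\ff_p$, which after rescaling we identify with the class $1\in\Ch^0(\BX)$.

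Before invoking Theorem~\ref{t1} I would check that $1$ lies in the first shell of $X$. This is automatic: over $k(X)$ the variety $X$ acquires a rational point, whose zero-cycle has degree $1$ and serves as a dual to $1$. By hypothesis $b$ is also in the first shell, so $1$ and $b$ belong to the same shell.

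I would then apply Theorem~\ref{t1} with $M=U$ (whose generic point $1$ sits in codimension $0$), with the rational cycle $\alpha:=b\in\Ch^l(\BX)$, and with $t=l$. The product is $b':=1\cdot b=b$, which lies in the same shell as $1$ by the previous paragraph. The theorem then produces an indecomposable direct summand $M'$ of $\M(X)$ with generic point $b$ and isomorphic to $M(l)=U(l)$, which is exactly the conclusion.

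There is no substantive obstacle to overcome: the corollary is essentially a packaging of Theorem~\ref{t1} in the special case where $M$ is taken to be the upper motive. The only verification required is the shell membership of the generic point of $U$, and this follows at once from the existence of a rational point on $X_{k(X)}$.
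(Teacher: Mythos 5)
Your proof is correct and is essentially the argument the paper has in mind (the corollary is stated with $\qed$ immediately after, meaning it is a direct application of Theorem~\ref{t1}): take $M = M_X$ the upper motive with generic point $1 \in \Ch^0(\BX)$, take $\alpha = b$, $t = l$, and observe $b' = 1 \cdot b = b$ lies in the same shell as $1$. The one point worth making explicit, which you do address implicitly by appealing to ``the same shell,'' is the paper's remark that the first shell coincides with a single $\SH_\Psi$, so that ``$1$ and $b$ are both in the first shell'' genuinely yields ``$1$ and $b$ are in the same shell'' as Theorem~\ref{t1} requires.
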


\section{Multiplication and Steenrod operations}\label{secSteenrod}
\begin{ntt}
Let $G_0$ be a split adjoint semisimple group.  We fix a parabolic subgroup $P$ containing a Borel subgroup $B$ containing a maximal split torus $T$ of $G_0$.  Occasionally we need to perform explicit calculation
in $\CH^*(G_0/P)$ or in $\Ch^*(G_0/P)$ considered as a ring or (in the latter case)
as a module over the Steenrod algebra.  In this section, we provide algorithms for doing so based on passing to the $T$-equivariant cohomology described in \cite{Bri97}, as was done for Grassmannians in \cite{KT03}.
There is a diverse literature studying the equivariant Chow rings, of which we indicate as examples \cite{FK96},
\cite{GKM}, \cite{HHH}, \cite{Ty}, \cite{GHZ}, and the references therein.

It is well-known (see e.g. \cite{Ko91}) that $\CH^*(G_0/P)$ has an additive basis consisting of
the classes of \emph{Schubert subvarieties} $X_w=[\overline{BwP/P}]$, where $w\in W/W_P$, $W$ stands for the Weyl group
of $G_0$ and $W_P$ stands for the Weyl group of (a Levi subgroup of) $P$.
We identify the cosets in $W/W_P$ with their minimal representatives.
The dimension of $X_w$ is $l(w)$, the minimal length of $w$ in the simple reflections.

Sometimes it is more convenient to enumerate the generators as
$$Z_w=X_{w_0ww_0^P},$$ where $w_0$ is the longest element of $W$ and $w_0^P$
is the longest element of $W_P$. Then the codimension of $Z_w$ is $l(w)$, in
particular, we have
$$
\pt=X_1=Z_{w_0w_0^P}.
$$
Note that
$Z_w$ is the \emph{Poincar\'e dual} to $X_w$ in the sense that
$$
X_u\cdot Z_w=\delta_{u,w} \,\pt.
$$
If $Q\subseteq P$ is another parabolic subgroup, the pull-back map
$$
\CH^*(G_0/P)\to\CH^*(G_0/Q)
$$
is injective and sends $Z_w$ in $\CH^*(G_0/P)$ to $Z_w$ in $\CH^*(G_0/Q)$. Sometimes we write
$Z_{[i_1,\ldots,i_l]}$ for $Z_w$ with $w=s_{i_1}\cdots s_{i_l}$ a reduced decomposition.
\end{ntt}

\begin{rem}[Comparison with other algorithms] \label{Steenrod.rem}
There are many recipes in the literature for computing the multiplication
table in the basis $Z_w$, a.k.a.~the generalized Littlewood-Richardson coefficients,
see e.g.~\cite{De74}.
But as far as the authors know, only the one in \cite{DuZ07} and \cite{Duan} can be adapted to computing also the Steenrod operations.  
It is based on the consideration
of the Bott-Samelson resolution of $G_0/B$.
This resolution is a toric variety, and
the structure of its Chow ring and the structure of its Steenrod algebra
are both well-known, and one finds explicit combinatorial formulas.  The algorithm presented below is in terms of equivariant cohomology, so is more general than the Duan-Zhao algorithm.  Also, our practical experience in performing the calculations used below in Lemma \ref{le7} suggests that our algorithm can be substantially faster.
\end{rem}

\begin{ntt}
Let $\widehat T$ be the group of characters of $T$, that is the root lattice of $G_0$, with basis consisting of fundamental roots $\alpha_1,\ldots,\alpha_n$.  (In particular, $n$ is the rank of $G$.) The ring $\CH^*_T(\pt)$ coincides with the symmetric algebra
$S(\widehat T)\simeq\zz[\alpha_1,\ldots,\alpha_n]$ of $\widehat T$.

Observe that the pullback of the structural map gives $\CH^*_T(G_0/P)$ the structure of a $\CH^*_T(\pt)$-module.
We have
$$
\widehat T\subset\CH^*_T(\pt)\to\CH^*_T(G_0/P)\twoheadrightarrow\CH^*(G_0/P),
$$
the last map is surjective with kernel generated by the image of $\widehat T$, see \cite[Section~2.3]{Bri97}. 
\end{ntt}

\begin{ntt}
There are $T$-equivariant analogs of Schubert classes $Z^T_w$ whose images
in $\CH^*(G_0/P)$ are $Z_w$.
The $T$-fixed points of $G_0/P$ are parametrized by $W/W_P$, see \cite[Section~2]{GHZ}.
Let $\ii_w$ be the pull-back map
$$
\CH^*_T(G_0/P)\to\CH^*_T(\pt)
$$
induced by the inclusion of the fixed point corresponding to $w\in W/W_P$.
Then the direct sum map
$$
\CH^*_T(G_0/P)
\xhookrightarrow{\oplus_{w\in W/W_P}\ii_w}\displaystyle\bigoplus_{w\in W/W_P}\CH^*_T(\pt)
$$
is injective, see \cite[Theorem~2.1]{GHZ}.
\end{ntt}

\begin{lem}[{cf.~\cite[\S2]{KT03}}]\label{tao}
Fix $w\in W/W_P$.
\begin{enumerate}
\item\label{tao:1} $\ii_u(Z^T_w)=0$ for all $u\not\ge w$ in the strong Bruhat order.
\item\label{tao:2} $\ii_w(Z^T_w)=\displaystyle\prod_{\alpha\in\Phi^+,\,w^{-1}(\alpha)\in\Phi^-}\alpha$.
\item\label{tao:3} For any $x\in\CH^*_T(G_0/P)$ with $\ii_u(x)=0$ for all $u\not\ge w$,
the polynomial $\ii_w(x)$ is divisible by $\ii_w(Z^T_w)$ in $\CH_T^*(\pt)$.
\end{enumerate}
\end{lem}

Actually $\ii_v(Z_w^T)$ can be computed via the \emph{generalized Billey formula} (\cite[Theorem~7.1]{Ty}).
Namely, if $v=s_{i_1}\ldots s_{i_l}$ is a reduced decomposition, we have
\begin{equation}\label{eq:Billey}
\ii_v(Z_w^T)=\sum_{s_{i_{j_1}}\ldots s_{i_{j_k}}\in R(w)}r(j_1)\ldots r(j_k),
\end{equation}
where $r(j)=s_{i_1}\ldots s_{i_{j-1}}(\alpha_{i_j})$ and $R(w)$ is the set of all reduced
decompositions of $w$. Properties \eqref{tao:1} and \eqref{tao:2} immediately follow, and
\eqref{tao:3} follows from \eqref{tao:2} and \cite[(2.12)]{GHZ}.

Now we describe an algorithm to compute the ring structure and the action
of the Steenrod algebra on $\Ch^*(G_0/P)$.

\subsection*{Elimination procedure}

The core of the algorithm is the following \emph{elimination procedure} which takes as input $x\in\CH^m_T(G_0/P)$ and returns  $a_w\in\CH^*_T(\pt)$ for $w \in W/W_P$ such that $x=\sum_{w} a_wZ_w^T$.

\begin{ntt}\label{elim}
Assume we are given $\ii_w(x)$ for all $w\in W/W_P$ with $l(w)\le m$.

Extend the Bruhat order to a linear order on $W/W_P$. We remark that the elimination procedure (but not the
final result) formally depends on the extension of the Bruhat order.
Let $u\in W/W_P$ be the minimal element such that $\ii_u(x)\ne 0$.
If such $u$ does not exist, then $x=0$.

Then by Lemma~\ref{tao}\eqref{tao:3} $\ii_u(x)$ is divisible by $\ii_u(Z_u^T)$. In particular,
since $\deg(\ii_u(x))=m$ and $\deg(\ii_u(Z_u^T))=l(u)$ by Lemma~\ref{tao}\eqref{tao:2}, we have
$l(u)\le m$.

Assume $l(u)<m$. Using the explicit formula of Lemma~\ref{tao}\eqref{tao:2} we compute the quotient polynomial
$b_u:=\frac{\ii_u(x)}{\ii_u(Z_u^T)}$ and set $x':=x-b_u\cdot Z_u^T$. Now we apply the same
procedure to $x'$ instead of $x$. Observe that by construction $\ii_u(x')=0$
and $\ii_w(x')=0$ for $w<u$ by Lemma~\ref{tao}\eqref{tao:1}.
Therefore eventually we will arrive to the situation when either $x=0$ or
$l(u)=m$.

Consider now all $v\in W/W_P$ such that $l(v)=m$. Since these $v$'s have the same
length, they are incomparable in the Bruhat order. Therefore the same consideration
shows that $b_v:=\frac{\ii_v(x)}{\ii_v(Z_v^T)}$ are integers. Set
$y:=x-\sum_{l(v)=m}b_vZ_v^T$.

We claim that $y=0$. Indeed, assume $y\ne 0$. Let $u$ be the minimal element
such that $\ii_u(y)\ne 0$. Then again $\ii_u(y)$ is divisible by $\ii_u(Z_u^T)$.
But $\deg(\ii_u(y))=m$ and $\deg(\ii_u(Z_u^T))=l(u)>m$. This finishes our
elimination procedure.
\end{ntt}

\subsection*{Multiplication, Steenrod operations}

\begin{nttmult}
Let $u,v\in W/W_P$. Using the elimination procedure, we compute the expansion
\[
Z_u^T\cdot Z_v^T=\sum_{w\in W/W_P}a_wZ_w^T \quad \text{with $a_w\in\CH^*_T(\pt)$.}
\]
Therefore $$Z_u\cdot Z_v=\sum_{w\in W/W_P}\bar a_wZ_w  \quad \text{in $\CH^*(G_0/P)$},$$
where $\bar a_w$ is the image of $a_w$ under the homomorphism
$$\CH^*_T(\pt)\to\CH^*(\pt)=\zz$$
that sends a polynomial to its constant term.
\end{nttmult}

\begin{nttsteen}
Let $p$ be a prime number and assume $\car k\ne p$. For $x\in\Ch^*(G_0/P)$
let $$S^\bullet(x)=\sum_{j\ge 0}S^j(x)t^j\in\Ch^*(G_0/P)[t]$$
denote the total Steenrod operation (see e.g. \cite{Br03}).

Recall that $\CH^*_T(\pt)$ is the polynomial ring $\zz[\alpha_1,\ldots,\alpha_n]$
in simple roots.
Set $\Ch^*_T(\pt):=\CH^*_T(\pt)/p$.

The total Steenrod operation on
$\Ch^*_T(\pt)$ is given by
\[
\begin{array}{rcl}
\ff_p[\alpha_1,\ldots,\alpha_n]&\xrightarrow{S^\bullet}&\ff_p[\alpha_1,\ldots,\alpha_n][t]\\
\alpha_j&\mapsto& \alpha_j+t\alpha_j^p.
\end{array}
\]

Let $u\in W/W_P$, $j\ge 0$. Using the elimination procedure, we find the expansion
\[
S^j(Z_u^T)=\sum_{v\in W/W_P}a_vZ_v^T \quad \text{with
$a_v\in\Ch^*_T(\pt)$.}
\]
Then $$S^j(Z_u)=\sum_{v\in W/W_P}\bar a_v Z_v.$$
\end{nttsteen}

\begin{nttchern}
There is an effective procedure to compute the $T$-equivariant Chern classes
of a $G_0$-equivariant vector bundle $V$ on $G_0/P$.
Indeed, the $r$-th Chern class $c_r^T(V)$ of $V$ is the image of
the Chern class $c_r^{G_0}(V)$ under the map
$$
\CH^*_{G_0}(G_0/P)\to\CH^*_T(G_0/P),
$$
so it lies inside $\CH^*_T(G_0/P)^W$ (the Weyl group action as in \cite[Section~4]{Ty}), and its value is determined by one entry, namely,
$$
\ii_w(c_r^T(V))=w(\ii_{[]}(c_r^T(V))).
$$
The entry at $[]$ can be computed through the map
$$
\CH^*_{G_0}(G_0/P)\to\CH^*_{P}(\pt)\to\CH^*_T(\pt),
$$
and it coincides with the $r$-th elementary symmetric function in roots of $V$.
Then one applies the elimination procedure. We now illustrate this.
\end{nttchern}

\begin{ex}\label{exg2}
Let $G_0$ be the split group of type $\mathrm{G}_2$ and $P$ its parabolic subgroup of type $2$.
There are exactly two $5$-dimensional twisted $G_0$-homogeneous flag varieties: a projective quadric,
which is the variety of parabolic subgroups of type $1$, and $G_0/P$ (which is not a quadric).
We compute some products in $\CH^*(G_0/P)$.

The representatives of minimal length in $W/W_P$ in the
(decreasing) Bruhat order are:
$$Z_{[2,1,2,1,2]},Z_{[1,2,1,2]},Z_{[2,1,2]},Z_{[1,2]},Z_{[2]},Z_{[]}.$$
                                                                                     
Put $\ii :=\oplus_{w\in W/W_P}\ii_w$.

Let us compute, say, $\ii_{[2,1,2,1,2]}(Z_{[2]}^T)$. By \eqref{eq:Billey} we have
$$
\ii_{[2,1,2,1,2]}(Z_{[2]}^T)=r(1)+r(3)+r(5)=\alpha_2+s_1s_2(\alpha_1)+s_2s_1s_2s_1(\alpha_2)
=6\alpha_1+4\alpha_2
$$
Computing the other entries in the same way we get
\begin{equation}\label{eq1}
\ii(Z_{[2]}^T)=(6\alpha_1+4\alpha_2,\quad 6\alpha_1+3\alpha_2,\quad 3\alpha_1+3\alpha_2,
\quad 3\alpha_1+\alpha_2,\quad \alpha_2,\quad 0).
\end{equation}

Let us compute $(Z_{[2]}^T)^2$.
Squaring~\eqref{eq1} we obtain:
$$
\ii((Z_{[2]}^T)^2)=((6\alpha_1+4\alpha_2)^2,\quad (6\alpha_1+3\alpha_2)^2,\quad (3\alpha_1+3\alpha_2)^2,
\quad (3\alpha_1+\alpha_2)^2,\quad \alpha_2^2,\quad 0).
$$
Applying the elimination procedure we get
$$
\ii_{[1,2]}((Z_{[2]}^T)^2-\alpha_2 Z_{[2]}^T)=3\alpha_1(\alpha_1+\alpha_2),
$$
but
$$
\ii_{[1,2]}(Z_{[1,2]}^T)=\alpha_1(\alpha_1+\alpha_2),
$$
so
$$
(Z_{[2]}^T)^2=\alpha_2 Z_{[2]}^T+3Z_{[1,2]}^T
$$
and, in particular, $Z_{[2]}^2=3Z_{[1,2]}$.

Continuing this way we can recover the whole multiplication table in $\CH^*_T(G_0/P)$ and $\CH^*(G_0/P)$.

Let us compute $S^1(Z_{[1,2,1,2]}^T)$ for $p=2$ now. The generalized Billey formula~\eqref{eq:Billey}
gives
\begin{align*}
\ii(Z_{[1,2,1,2]}^T)=(&(\alpha_1+\alpha_2)(3\alpha_1+2\alpha_2)(2\alpha_1+\alpha_2)(3\alpha_1+\alpha_2),\\
&\alpha_1(3\alpha_1+2\alpha_2)(2\alpha_1+\alpha_2)(3\alpha_1+\alpha_2),\quad 0,\quad 0,\quad 0,\quad 0)\\
=(&(\alpha_1^2+\alpha_2^2)\alpha_1\alpha_2,\quad \alpha_1^2\alpha_2(\alpha_1+\alpha_2),\quad 0,\quad 0,\quad 0,\quad 0).
\end{align*}

Substituting $\alpha_1\mapsto\alpha_1+t\alpha_1^2$, $\alpha_2\mapsto\alpha_2+t\alpha_2^2$, taking modulo $2$ and taking the coefficient
at $t$ we get:
$$S^1(\ii(Z_{[1,2,1,2]}^T))=((\alpha_1^2+\alpha_2^2)(\alpha_1^2\alpha_2+\alpha_1\alpha_2^2),\quad \alpha_1^3\alpha_2(\alpha_1+\alpha_2),\quad 0,\quad 0,\quad 0,\quad 0)$$
and the elimination procedure gives $$S^1(Z_{[1,2,1,2]}^T)=\alpha_1 Z_{[1,2,1,2]}^T+Z_{[2,1,2,1,2]}^T.$$
In particular, $S^1(Z_{[1,2,1,2]})=Z_{[2,1,2,1,2]}=\pt$.

Now we compute the second Chern class $c^T_2$ of the tangent bundle of $G_0/P$.
The roots of this bundle are:
$$\alpha_2,\quad \alpha_1+\alpha_2,\quad 3\alpha_1+2\alpha_2,\quad 2\alpha_1+\alpha_2,\quad 3\alpha_1+\alpha_2.$$
The total Chern class equals
$$(1+t\alpha_2)(1+t(\alpha_1+\alpha_2))(1+t(3\alpha_1+2\alpha_2))(1+t(2\alpha_1+\alpha_2))(1+t(3\alpha_1+\alpha_2)).
$$
The coefficient at $t^2$ is $29\alpha_1^2+14\alpha_2^2+42\alpha_1\alpha_2$.
Now
\begin{align*}
&s_2(29\alpha_1^2+14\alpha_2^2+42\alpha_1\alpha_2)=29\alpha_1^2+\alpha_2^2+16\alpha_1\alpha_2,\\
&s_1(29\alpha_1^2+\alpha_2^2+16\alpha_1\alpha_2)=-10\alpha_1^2+\alpha_2^2-10\alpha_1\alpha_2,\\
&s_2(-10\alpha_1^2+\alpha_2^2-10\alpha_1\alpha_2)=-10\alpha_1^2+\alpha_2^2-10\alpha_1\alpha_2,\\
&s_1(-10\alpha_1^2+\alpha_2^2-10\alpha_1\alpha_2)=29\alpha_1^2+\alpha_2^2+16\alpha_1\alpha_2,\\
&s_2(29\alpha_1^2+\alpha_2^2+16\alpha_1\alpha_2)=29\alpha_1^2+14\alpha_2^2+42\alpha_1\alpha_2,
\end{align*}
so we have
\begin{align*}
\ii(c_2^T)=(&29\alpha_1^2+14\alpha_2^2+42\alpha_1\alpha_2,\quad 29\alpha_1^2+\alpha_2^2+16\alpha_1\alpha_2,\quad
-10\alpha_1^2+\alpha_2^2-10\alpha_1\alpha_2,\\
&-10\alpha_1^2+\alpha_2^2-10\alpha_1\alpha_2,\quad
29\alpha_1^2+\alpha_2^2+16\alpha_1\alpha_2,\quad 29\alpha_1^2+14\alpha_2^2+42\alpha_1\alpha_2).
\end{align*}

By the elimination procedure we obtain:
$$c^T_2=(29\alpha_1^2+14\alpha_2^2+42\alpha_1\alpha_2)Z_{[]}^T-13(2\alpha_1+\alpha_2)Z_{[2]}^T+13Z_{[1,2]}^T.$$
In particular, the ordinary second Chern class of the bundle equals $13Z_{[1,2]}$.
\end{ex}

\section{Chernousov-Merkurjev formula}
Recall that $G$ denotes a semisimple algebraic group of inner type. Let
$X$ and $X'$ be twisted $G$-homogeneous flag varieties. We present $G$ as a twisted form of a
split group $G_0$. Then $X$ and $X'$ are twisted forms of $G_0/P$ and $G_0/P'$
resp. for some standard parabolic subgroups $P$, $P'$ of $G_0$. We say that $X$ and
$X'$ are homogeneous varieties of type $P$ and $P'$ respectively.

In \cite[Proposition~13]{CMe06} Chernousov and Merkurjev construct a filtration
by closed subvarieties on $X\times X'$ such that the successive differences
are affine fibrations over $Y_w$ of rank $l(W_PwW_{P'})$, where $w$ runs over
the representatives of $W_P\backslash W/W_{P'}$, $W$, $W_P$, $W_{P'}$ are the Weyl groups of $G_0$,
$P$, $P'$ resp., $l(W_PwW_{P'})$ is the length of the minimal representative of the
double coset $W_PwW_{P'}$, and $Y_w$ is a twisted form of $G_0/Q_w$ with
$Q_w=R_uP\cdot(P\cap wP'w^{-1})$, where $R_uP$ stands for the unipotent radical
of $P$. Note that by \cite[Lemma~7]{CMe06} $Q_w$ is a standard parabolic
subgroup of $G_0$ and is contained in $P$.

By \cite[Theorem~4.4]{NeZ} we get:
\begin{prop}\label{lchme}
In the above notation
$$
\CH^*(X\times X')\simeq\bigoplus_{w\in W_P\backslash W/W_{P'}}\CH^{*-\dim X-\dim X'+\dim Y_w+l(W_PwW_{P'})}(Y_w).
$$
\end{prop}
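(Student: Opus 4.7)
My plan is to deduce this Chow-group statement directly as a formal consequence of the motivic decomposition of $X \times X'$ cited in the paragraph immediately preceding the proposition. Essentially no new work is required beyond applying the Chow-group realization functor to that motivic isomorphism and tracking the effect of the Tate shifts on the grading.

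First, I would invoke \cite[Theorem~6.3]{CMe06}, which gives the motivic isomorphism
$$\M(X \times X') \simeq \bigoplus_{w \in W_P \backslash W / W_{P'}} \M(Y_w)(l(w))$$
in the category of Chow motives over $k$. This is exactly the statement displayed just above the proposition, so I would simply refer to it.

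Next, I would apply the Chow-group functor $\CH^*(-)$ to both sides. This functor is additive on the category of Chow motives, so it commutes with the finite direct sum on the right. Moreover, by the very definition of the Tate twist in the category of Chow motives, one has $\CH^i(\M(Y)(l)) = \CH^{i-l}(Y)$ for every smooth projective $Y$ and every integer $l \ge 0$. Combining these two compatibilities yields
$$\CH^*(X \times X') = \CH^*(\M(X \times X')) \simeq \bigoplus_{w \in W_P \backslash W / W_{P'}} \CH^{*-l(w)}(Y_w),$$
which is the desired formula.

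The hard part of this proposition is not really here: all the substance has already been absorbed into the motivic decomposition of \cite{CMe06}, which identifies the summands $Y_w$ as twisted forms of $G_0/Q_w$ with $Q_w = R_uP \cdot (P \cap wP'w^{-1})$ and asserts that $Q_w$ is a standard parabolic subgroup of $G_0$ by \cite[Lemma~3.4]{CMe06}. Once that input is taken for granted, the only thing to check is the bookkeeping of the Tate shift, which is immediate from the definition of $\M(Y)(l)$.
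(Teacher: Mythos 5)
Your proposal is correct and follows exactly the paper's (implicit) argument: the paper introduces the proposition with "In particular, at the level of Chow groups we have," signaling that it is just the Chow-group realization of the Chernousov--Merkurjev motivic decomposition, with the Tate twists producing the grading shifts. Nothing further is required.
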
               

\begin{rem}
Theorem~4.4 in \cite{NeZ} is proved for any oriented cohomology theory
using resolution of singularities. So, it is assumed there that $\car k=0$.
For Chow groups this is not necessary, cf.~\cite[Theorem~66.2]{EKM}.
\end{rem}

\begin{ex}\label{62}
If $G$ is a special orthogonal group, and $X=X'=X_1$ is a projective quadric
of dimension at least $5$, then
$$\CH^*(X\times X)\simeq\CH^*(X)\oplus\CH^{*-1}(X_{1,2})\oplus\CH^{*-\dim X}(X).$$
\end{ex}

We now develop an important tool to produce rational projectors.

Let $w\in W_P\backslash W/W_{P'}$ and $f\colon\BY_w\to\BX$ be the natural map induced by the inclusion $Q_w\subset P$.

For a Schubert cycle $\beta=[\overline{BuQ_w/Q_w}]\in\CH^*(\BY_w)$,
$u\in W$, define an element $\beta_\star(1)\in\CH^*(\overline{X'})$ as
$$
\beta_\star(1)=\begin{cases}
[\overline{BuwP'/P'}], & \text{if }l(uwW_{P'})=l(uW_{Q_w})+l(W_PwW_{P'});\\
0, & \text{otherwise,}
\end{cases}
$$
and for an arbitrary $\beta\in\CH^*(\BY_w)$ define $\beta_\star(1)$ by linearity.

Fix a rational cycle $\alpha\in\CH^*(\BY_w)$ and
for a cycle $x\in\CH^*(\BX)$ define $$\alpha_\star(x)=(\alpha\cdot f^*(x))_\star(1)\in\CH^*(\overline{X'}).$$ 

\begin{thm}\label{tchme}
In the preceding notation, for $\alpha\in\CH^*(\BY_w)$
$$\alpha_\star\colon\CH(\BX)\to\CH(\overline{X'})$$ is the realization of a rational
cycle on $\BX\times\overline{X'}$. Moreover, the realization of any rational cycle on
$\BX\times\overline{X'}$ can be constructed in this way.

In particular, if $P=P'$, then
for $\alpha\in\Ch^{\dim Y_w-\dim X+l(W_PwW_P)}(\BY_w)$
some power of $\alpha_\star$ is the realization
of a rational projector on $\BX$, and
the realization of any rational projector on $\BX$ can be constructed in this way.
\end{thm}
\begin{proof}
Let $X$ and $X'$ be homogeneous $G$-varieties of type $P$ and $P'$. Consider the following diagram
$$
\xymatrix{
G_0/(P\cap wP'w^{-1})\ar@{^{(}->}@/^1pc/[rr]^-{j}\ar@{^{(}->}[r]\ar[d]^-{\pi'}&\Gamma_w\ar[r]_-{i}\ar[ld]^-{\pi}&G_0/P\times G_0/P'\ar[r]^-{\pr_2}\ar[ld]^-{\pr_1}&G_0/P'\\
G_0/Q_w\ar[r]^-{f}&G_0/P
}
$$
where the maps $\pi'$ and $f$ are induced by inclusions $P\cap wP'w^{-1}\subset Q_w\subset P$,
$G_0/(P\cap wP'w^{-1})$ is considered as a subvariety of $G_0/P\times G_0/P'$
under the map
$$j\colon g(P\cap wP'w^{-1})\mapsto (gP,gwP'),\quad g\in G_0,$$
$\Gamma_w$ is the closure in $G_0/P\times G_0/P'\times G_0/Q_w$ of the image
of the graph of $\pi'$ under the map $j\times\id$,
and $i$ and $\pi$ are induced by the projections.

The proof in \cite{NeZ} shows that the image of an element $\alpha\in\CH^*(G_0/Q_w)$
under the isomorphism of Proposition~\ref{lchme} equals $i_*\pi^*(\alpha)$.
Further, we identify the image of $\alpha$ with its realization, i.e., with the homomorphism
\begin{align*}
\alpha_\star\colon\CH^*(G_0/P)&\to\CH^*(G_0/P')\\
x&\mapsto(\pr_2)_*(i_*\pi^*(\alpha)\cdot\pr_1^*(x)).
\end{align*}
The above diagram and the projection formula show that
\begin{align}\label{ff1}
\nonumber
\alpha_\star(x) &=(\pr_2)_*(i_*\pi^*(\alpha)\cdot\pr^*_1(x))=(\pr_2)_*(i_*(\pi^*(\alpha)\cdot i^*\pr_1^*(x)))\\
&=(\pr_2)_*(i_*\pi^*(\alpha\cdot f^*(x)))=(\alpha\cdot f^*(x))_\star(1).
\end{align}
In particular, to compute $\alpha_\star(x)$, we just need to know the image of $\beta_\star(1)$ for each element
$\beta\in\CH^*(G_0/Q_w)$. One sees directly that for a Schubert cycle $\beta=[\overline{BuQ_w/Q_w}]$ 
                
\begin{equation}\label{ff2}
\beta_\star(1)=\begin{cases}
[\overline{BuwP'/P'}], & \text{if }l(uwW_{P'})=l(uW_{Q_w})+l(W_PwW_{P'});\\
0, & \text{otherwise.}
\end{cases}
\end{equation}

To finish the proof of the theorem it remains to set $P'=P$ and note that in $\End(\M(\BX))$ some power of
any element is a projector.
\end{proof}

\begin{ex}\label{ex65}
Let $A$ be a central simple algebra of degree $n+1$, $G=\PGL_1(A)$,
$P=P_1$, and $P'=P_n$. Then $G_0=\PGL_{n+1}$, $X$ is the Severi-Brauer variety $\SB(A)$, $X'=\SB(A^{\op})$,
$W=\Sym(\{1,\ldots,n+1\})$ is the symmetric group on letters $1,\ldots,n+1$,
$W_P=\Sym(\{2,\ldots,n+1\})$, and $W_{P'}=\Sym(\{1,\ldots,n\})$.

The minimal representatives of the cosets $W/W_{P'}$ are
$$\{1,\,s_n,\,s_{n-1}s_n,\,\ldots,\,s_1\cdots s_{n-1}s_n\},$$ where $s_i=(i,i+1)$ is a
simple transposition, and the minimal representatives of the cosets
$W/W_P$ are $$\{1,\,s_1,\,s_2s_1,\,\ldots,\,s_n\cdots s_2s_1\}.$$
The minimal representatives of the double cosets
$W_P\backslash W/W_{P'}$ are $\{1,s_1\cdots s_{n-1}s_n\}$. Take $w=1$. Then
$Q_w=P_{1,n}$.

By Proposition~\ref{lchme} we have
$$\CH^*(\SB(A)\times\SB(A^\op))\simeq\CH^{*-1}(\SB_{1,n}(A))\oplus\CH^*(\SB(A)),$$
where $\SB_{1,n}(A)$ is the variety of parabolic subgroups of type $P_{1,n}$
(known also as the incidence variety).

Take (a rational) $\alpha=1\in\CH^0(G_0/P_{1,n})$. Let $h_1$ and $h_n$ be
the Schubert cycles in $\CH^1(G_0/P)$ and $\CH^1(G_0/P')$. All Schubert cycles
in $\CH^*(G_0/P)$ equal $1,h_1,\ldots,h_1^n$. We compute $\alpha_\star(h_1^i)$ now.

We have $\alpha_\star(h_1^i)=(f^*(h_1^i))_\star(1).$ The cycle $h_1^i$ equals
$[\overline{Bv_iP/P}]$ with $v_i=s_{n-i}\cdots s_1$ and
$f^*(h_1^i)=[\overline{Bv_iv_0P_{1,n}/P_{1,n}}]$ with $v_0=s_2\cdots s_n$.
By formula~\eqref{ff2} $$(f^*(h_1^i))_\star(1)=\begin{cases}
h_n, &\text{if } i=n;\\
1, &\text{if } i=n-1;\\
0, &\text{otherwise}.
\end{cases}$$
Thus $\alpha$ as an element in $\CH^*(G_0/P\times G_0/P')$ equals
$h_1\times 1+1\times h_n$. So, the latter cycle is rational.
\end{ex}

\section{Weak special correspondences}
\begin{dfn}
Let $p$ be a prime number, and $X$ be a smooth projective irreducible variety over $k$
of dimension $b(p-1)$ for some $b$.
A cycle $\rho\in\Ch^b(X\times X)$ is called a \emph{weak special correspondence}, if
$\rho_{k(X)}=H\times 1-1\times H$ for some $H\in\Ch^b(X_{k(X)})$,
$\bar\pi:=c\cdot\rho^{p-1}_{k(X)}$ is a projector for some $c\in\ff_p^\times$, and
$$(X_{k(X)},\bar\pi)\simeq\bigoplus_{i=0}^{p-1}\ff_p(bi).$$
\end{dfn}

\begin{lem}[Rost, {\cite[Section~9]{Ro07}}]\label{le62}
Assume that $X$ possesses a weak special correspondence, has no zero-cycles
of degree coprime to $p$, and $\car k=0$. Then $\dim X=p^n-1$ for some $n$.
\end{lem}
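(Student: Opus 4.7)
The plan is to follow Rost's characteristic-number approach. First, I would work directly with the rational cycle $\rho \in \Ch^b(X \times X)$: over $k(X)$, the decomposition $\rho_{k(X)} = H \times 1 - 1 \times H$ identifies $H \in \Ch^b(X_{k(X)})$ as the generic point of the cellular structure on $(X_{k(X)}, \bar\pi) \simeq \bigoplus_{i=0}^{p-1} \ff_p(bi)$. The hypothesis that $X$ has no zero-cycle of degree coprime to $p$ says that all these Tate summands lie in strictly positive codimension, equivalently that every rational top-codimensional class on $X$ has degree divisible by $p$.

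Next, I would invoke Voevodsky's mod-$p$ reduced power operations on Chow groups, available because $\car k = 0$ and known to preserve rationality. Applying the total operation $S^\bullet$ to $\rho$ and restricting to $X \times X_{k(X)}$ produces rational classes $S^i(H) \in \Ch^{b+i(p-1)}(X_{k(X)})$, and the motivic decomposition forces each such class to be congruent (modulo cycles supported in codimensions outside $\{b, 2b, \ldots, (p-1)b\}$) to an integer polynomial in $H$ of degree at most $p-1$. The Wu formula relates $S^\bullet(H)$ to the total Chern class $c^\bullet(T_X)$ of the tangent bundle of $X$, thereby converting these identities into statements about characteristic numbers of $T_X$.

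The main obstacle, and the arithmetic core of the argument (this is the content of \cite[\S9]{Ro07}), is to combine the resulting identities into a single congruence for the top characteristic number $s_N(X) := \deg s_N(T_X)$, where $N = b(p-1) = \dim X$ and $s_N$ is the $N$-th Newton power sum of the Chern roots. Milnor's classical congruence asserts that $s_N(X)$ is always divisible by $p$, and that its reduction modulo $p^2$ is a meaningful mod-$p$ invariant exactly when $N+1$ is a power of $p$; for any other value of $N$, a multinomial-coefficient analysis indexed by the base-$p$ digits of $N+1$ produces an honest rational zero-cycle of degree coprime to $p$ on $X$. The weak special correspondence, via the nontriviality of $\bar\pi = c\rho^{p-1}_{k(X)}$ as a projector with the prescribed Poincar\'e polynomial, forces $s_N(X) \not\equiv 0 \pmod{p^2}$, while the no-zero-cycle hypothesis rules out the residual contribution in the case that $N+1$ is not a power of $p$. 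The only way both constraints can hold simultaneously is $N+1 = p^n$, hence $\dim X = p^n - 1$ and $b = 1 + p + \cdots + p^{n-1}$, as claimed.
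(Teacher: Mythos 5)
The paper does not actually prove this lemma: it is stated with an attribution to Rost and a citation to \cite[\S9]{Ro07}, and no argument is reproduced. So there is no in-paper proof against which to match your sketch; the most I can do is assess the outline on its own terms.

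Your outline does point in the right direction --- Rost's argument is a characteristic-number calculation combining Steenrod-type operations, the degree formula, and base-$p$ arithmetic --- but several of the load-bearing statements are wrong as written. You assert ``Milnor's classical congruence asserts that $s_N(X)$ is always divisible by $p$.'' That is false as a universal statement: $s_N(X)\equiv 0\pmod p$ for \emph{every} smooth projective $X$ of dimension $N$ precisely when $N+1$ is a power of $p$, and for other $N$ one already has $s_N(\pp^N)=N+1$ coprime to $p$ whenever $p\nmid N+1$. Consequently the logical direction must be reversed from what you wrote: one first derives both $p\mid s_N(X)$ and $p^2\nmid s_N(X)$ from the weak special correspondence together with the no-zero-cycle hypothesis (this is where the degree formula and the projector $\bar\pi=c\,\rho^{p-1}_{k(X)}$ enter), and only then does a combinatorial computation --- Kummer's theorem on carries in the base-$p$ addition underlying the relevant multinomial coefficient --- force $b=1+p+\cdots+p^{n-1}$, i.e.\ $N+1=p^n$. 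Two further imprecisions: Steenrod operations on Chow groups exist for any $k$ with $\car k\neq p$, so ``available because $\car k=0$'' misidentifies why characteristic zero is assumed (it is needed for the degree-formula machinery, e.g.\ resolution of singularities, not for the existence of the operations); and the phrase ``rules out the residual contribution'' is exactly where the substance of the proof lives, and the sketch gives no indication of how that step actually goes.
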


\begin{lem}
Assume that $p\in\{2,3\}$.
Let $X$ be a smooth projective irreducible variety over $k$ of dimension $b(p-1)$
with no zero-cycles of degree coprime to $p$, and $\pi$ a projector over $k$ such that
$(X_{k(X)},\pi_{k(X)})\simeq\bigoplus_{i=0}^{p-1}\ff_p(bi)$. Then $X$ possesses a weak special correspondence.
\end{lem}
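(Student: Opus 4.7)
The plan is to produce a rational cycle $\rho\in\Ch^b(X\times X)$ whose restriction to $X_{k(X)}\times X_{k(X)}$ has the form $H\times 1-1\times H$ for some $H\in\Ch^b(X_{k(X)})$ with $\deg(H^{p-1})\ne 0$ in $\ff_p$.  Granted such a pair $(\rho,H)$, the binomial expansion combined with the congruence $\binom{p-1}{i}\equiv(-1)^i\pmod p$ yields
\[
\rho_{k(X)}^{p-1}=\sum_{i=0}^{p-1}H^i\times H^{p-1-i},
\]
and scaling by $c:=\deg(H^{p-1})^{-1}$ produces a projector whose motive is $\bigoplus_{i=0}^{p-1}\ff_p(bi)$, which is exactly the weak-special-correspondence condition.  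Thus the proof reduces to constructing $\rho$ and $H$ with these properties.

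First I would unpack the given decomposition.  Fix an isomorphism $(X_{k(X)},\pi_{k(X)})\simeq\bigoplus_{i=0}^{p-1}\ff_p(bi)$, yielding cycles $\alpha_i\in\Ch_{bi}(X_{k(X)})$ and $\beta_i\in\Ch^{bi}(X_{k(X)})$ with $\pi_{k(X)}=\sum_i\alpha_i\times\beta_i$ and $\deg(\alpha_i\cdot\beta_j)=\delta_{ij}$.  Normalize so that $\beta_0=[X_{k(X)}]=\alpha_{p-1}$; in particular $\deg\alpha_0=\deg\beta_{p-1}=1$, and the natural candidates for $H$ inside $\Ch^b(X_{k(X)})$ are $\beta_1$ and $\alpha_{p-2}$, both of which sit in codimension $b$.

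For $p=2$ the construction is immediate: here $\dim X=b$, so both $\pi$ and $\pi^t$ already lie in $\Ch^b(X\times X)$, and I would take $\rho:=\pi^t\circ\pi$, which is automatically rational.  Applying the composition formula $(c\times d)\circ(a\times b)=\deg(b\cdot c)(a\times d)$ term by term gives $\rho_{k(X)}=\alpha_0\times 1+1\times\alpha_0$, and since $-1=1$ in $\ff_2$ this is $H\times 1-1\times H$ with $H:=\alpha_0$.  Then $\deg H=1$ and $\rho^{p-1}=\rho$, so $c=1$ works and the verification is routine.

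For $p\in\{3,5\}$ the composition $\pi^t\circ\pi$ lies in $\Ch^{\dim X}$ rather than $\Ch^b$, so I would need a more delicate construction: produce $\rho$ from a suitable linear combination of rational cycles obtained by combining $\pi$, its transpose, cycles supported on the diagonal, and Steenrod transforms of rational classes (using the algorithm of Section \ref{secSteenrod}), arranging coefficients so that the $k(X)$-restriction reduces to $H\times 1-1\times H$ for an $H\in\Ch^b(X_{k(X)})$ congruent to $\beta_1$ modulo $\ker\pi_{k(X)}$.  The restriction $p\leq 5$ is essential both for the combinatorial construction of the rational lift and for verifying that $H^{p-1}$ has nonzero degree, since the mod-$p$ binomial identities and the ring-theoretic control of $\beta_1^{p-1}$ modulo $\ker\pi_{k(X)}$ break down for $p\ge 7$.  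The hard part will be making this construction precise and simultaneously guaranteeing rationality of $\rho$ together with $\deg(H^{p-1})\ne 0$; the clean case $p=2$ serves as a template, but for $p=3,5$ genuine work with the Steenrod algebra and with the multiplicative structure of $\Ch^*(X_{k(X)})$ is required.
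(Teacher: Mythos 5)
Your overall framing is sound: reduce the lemma to producing a rational $\rho\in\Ch^b(X\times X)$ with $\rho_{k(X)}=H\times 1-1\times H$, $\deg(H^{p-1})\ne 0$, and then verify the weak-special-correspondence axioms via the binomial congruence. Your $p=2$ case is also essentially the paper's argument (both exploit $\pi^t\circ\pi$ and the resulting symmetry). However, for $p\in\{3,5\}$ you have identified the right target cycle ($H$ should be $\beta_1=h_1$ up to scalar) but left the construction of the rational lift $\rho$ entirely unresolved, and the tools you propose are not the right ones.

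The concrete gap is this: the paper does not construct $\rho$ from Steenrod transforms or diagonal classes. It invokes the \emph{generic point diagram} \cite[Lemma~1.8]{PSZ} applied to the inclusion $f\colon\Spec k(X)\to X$ to produce a rational cycle $\beta=h_1\times 1+\alpha\in\Ch^b(\BX\times\BX)$ with $(\id^*\times f)(\alpha)=0$, and then defines $\rho_1:=\bar\pi\circ\beta\circ\bar\pi$, which a direct computation shows equals $h_1\times 1 + a_1\cdot 1\times h_1$ for some $a_1\in\ff_p$; rationality of $\rho_1$ is automatic because $\pi$ and $\beta$ are rational. The coefficient $a_1$ is then shown to be nonzero by symmetrizing (replacing $\rho_1$ by $\rho_1^t$ if necessary) and forced to equal $-1$ because otherwise $\Delta^*(\rho_1^2)$ would be a rational zero-cycle of degree prime to $p$. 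For $p=5$ an additional auxiliary cycle $\rho_3\in\Ch^{3b}$ is constructed the same way, and the product $\rho_1\cdot\rho_3$ is used to show that $h_2$ is proportional to $h_1^2$, which is what makes $\deg(h_1^4)\ne 0$. None of this appears in your sketch. Also, your remark that ``the mod-$p$ binomial identities $\dots$ break down for $p\ge 7$'' is not the obstruction: $\binom{p-1}{i}\equiv(-1)^i\pmod p$ holds for every prime; the real obstruction at larger $p$ is the proliferation of classes $h_2,\dots,h_{p-2}$ that must be controlled by auxiliary cycles $\rho_3,\rho_5,\dots$, and the paper's argument only handles this through $p=5$.
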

\begin{proof}
Denote $\bar\pi=\pi_{k(X)}$ and $\BX=X_{k(X)}$. Since $(\BX,\bar\pi)\simeq\bigoplus_{i=0}^{p-1}\ff_p(bi)$,
the projector $\bar\pi$ equals $\sum_{i=0}^{p-1} h_i\times g_i$ for some $h_i\in\Ch^{bi}(\BX)$ and
$g_i\in\Ch_{bi}(\BX)$ with $\deg(h_ig_i)=1$ for all $i$.

Note first that $\pi^t\circ\pi$ contains at most $p$ summands and is non-zero, since
$$(g_{p-1}\times h_{p-1})\circ(h_0\times g_0)=d\cdot h_0\times h_{p-1}\ne 0,$$
where $d=\deg(g_0g_{p-1})\in\ff_p^\times$. Therefore,
since $X$ has no zero-cycles of degree coprime to $p$, we can assume that $g_i=h_{p-1-i}$
for all $i$. In particular, this proves our lemma for $p=2$.

Write $f\colon\Spec k(X)\to X$ for the generic point.
By the generic point diagram (see \cite[Lemma~1.8]{PSZ}) there is a cycle $\alpha\in\Ch^b(\BX\times\BX)$
such that $\beta:=h_1\times 1+\alpha$ is defined over $k$ and $(\id_X^*\times f)(\alpha)=0$.

Consider $\bar\pi\circ\beta\circ\bar\pi$. A direct computation shows that this cycle
equals $\rho_1:=h_1\times 1+a_11\times h_1$ for some $a_1\in\ff_p$. By symmetry we can assume that $a_1\ne 0$.
If $p=3$, then set $c=\deg(h_1^2)^{-1}\in\ff_p^\times$. 
The cycle $\rho_1^2=h_1^2\times 1-a_1 h_1\times h_1+1\times h_1^2$. Since $X$ has no zero-cycles
of degree coprime to $p$, we have $a_1=-1$. Moreover, $c\cdot\rho_1^2$ is a projector. Thus,
$\rho_1$ is a weak special correspondence on $X$.
\end{proof}

\begin{rem}
Using messier computations one can also prove the above Lemma for $p=5$.
\end{rem}

\begin{lem}
Let $X$ be a smooth projective irreducible variety over $k$ with $\car k=0$ and $M$ a direct summand of its motive.
Assume that $M$ is indecomposable and generically split
and $M_{k(X)}\simeq\bigoplus_{i\in I\cup\{0\}}\ff_p(i)$
for some multiset of positive indexes $I$.

Then there exists a smooth projective irreducible variety $Y$ over $k$
such that $M$ is isomorphic to an upper direct summand of $\M(Y)$ and $\dim M=\dim Y$.
\end{lem}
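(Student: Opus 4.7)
Let $d := \dim M$ and $n := \dim X$. If $n = d$, take $Y = X$; the presence of $\ff_p(0)$ in $M_{k(X)}$ makes $M$ automatically an upper direct summand. So assume $n > d$. I will extract from $M$ a $k$-rational dimension-$d$ cycle on $X$, resolve its support to a smooth projective variety $Y$ of dimension $d$, and then realize $M$ as an upper direct summand of $\M(Y)$ via Lemma~\ref{lx}.

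Consider the transposed summand $N := (X, \pi^t)$ of $\M(X)$, where $\pi$ is the projector defining $M$. Transposing the rank-one decomposition of $\pi_{k(X)}$ realizing $M_{k(X)} \simeq \bigoplus_{i \in I \cup \{0\}} \ff_p(i)$ gives $N_{k(X)} \simeq \bigoplus_{i \in I \cup \{0\}} \ff_p(n-i)$, so $N$ is also generically split. Its smallest Tate index is $n - d$, hence Lemma~\ref{genpt2} applied to $N$ yields a $k$-rational class $b \in \Ch^{n-d}(X)$ of dimension $d$ generating the $\ff_p(n-d)$-summand of $N_F$; over a splitting field $F$, $b$ is proportional to the class $f_d$ in any rank-one decomposition $\pi_F = \sum_i e_i \boxtimes f_i$. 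After multiplying by a unit in $\ff_p^\times$ if needed, pick an effective representative of $b$ and apply Hironaka's resolution (valid because $\car k = 0$) to obtain a smooth projective $Y$ of dimension $d$ with a morphism $f \colon Y \to X$ such that $f_*[Y]$ equals $b$ up to a nonzero scalar.

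Reducing to the case $M$ indecomposable via Krull--Schmidt (\cite[Corollary~9.7]{CMe06}), I apply Lemma~\ref{lx} with $Y$ in place of its ``$X$''. The correspondences $\alpha \colon M \to \M(Y)$ and $\beta \colon \M(Y) \to M$ are built from $f$ and $\pi$: take $\alpha$ to be the restriction to $M$ of the pullback correspondence ${}^t\Gamma_f \in \Ch^n(X \times Y)$, so that $\alpha([X]) = [Y]$; take $\beta$ to be $\pi \circ [z \times X] \in \Ch^d(Y \times X)$ for a suitable rational degree-$1$ zero-cycle $z$ on $Y$, so that $\beta([Y]) = \pi([X]) = [X]$ in $M$. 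Then over $F$, the composition $\beta \circ \alpha$ fixes the generic point $[X] \in M_F$, and Lemma~\ref{lx} yields $M$ as a direct summand of $\M(Y)$; the upper property is automatic from $\ff_p(0) \subseteq M_F$.

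The main obstacle is producing the rational degree-one zero-cycle $z$ on $Y$ needed to define $\beta$. Over $F$ the natural choice is $f^* e_d$, whose degree equals $\deg(e_d \cdot f_d) = 1$ via the dual pairing inside the decomposition of $\pi_F$, but $e_d$ is not a priori $k$-rational, so extracting a rational $z$ with degree coprime to $p$ requires a more indirect argument (for example via the rational cycle $(f \times f)^* \pi \in \Ch^n(Y \times Y)$ when $n \le 2d$, or by using the rational projector $\pi$ to produce auxiliary rational cycles on $Y$). The $\car k = 0$ hypothesis is essential for Hironaka's resolution and may reappear at this step.
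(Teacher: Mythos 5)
Your opening move is the same one the paper makes: pass to the transposed summand $N=(X,\pi^t)$, apply Lemma~\ref{genpt2} to it, and obtain a $k$-rational cycle of dimension $d=\dim M$ whose support produces the candidate for $Y$ (the paper phrases this as ``the generic point of $M$'' and calls an irreducible component of its support $Y''$). From there, however, the two arguments diverge, and your route has a gap that you yourself flag: to feed Lemma~\ref{lx} you need a $k$-rational zero-cycle $z$ on $Y$ of degree coprime to $p$, and the natural candidate $f^*(a)$ (where $a$ is the codimension-$d$ cycle with $\deg(ab)\equiv 1\pmod p$) is only defined over $k(X)$, not over $k$. The repairs you suggest — the cycle $(f\times f)^*\pi$ under the restriction $n\le 2d$, or unspecified ``auxiliary rational cycles'' — are not worked out and do not obviously close the gap. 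Two further soft spots: (i) a class in $\Ch^{n-d}(X)$ need not admit an effective representative, so ``resolve an effective representative of $b$'' is not available as stated; what one actually does is single out, using $\deg(ab)\equiv 1\pmod p$, an irreducible component of the support of $b$ on which the pairing is nonzero mod $p$; (ii) the reduction to $M$ indecomposable via Krull--Schmidt is not harmless, since after decomposing $M=\bigoplus M_j$ it is not automatic that the upper indecomposable summand $M_0$ has $\dim M_0=\dim M$.

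The paper sidesteps the rationality problem entirely rather than confronting it. It takes $Y'$ a closed irreducible subvariety of $X$ of \emph{minimal} dimension such that $Y'_{k(X)}$ has a zero-cycle of degree coprime to $p$, then invokes \cite[Lemma~7.1]{Sem09} (this is where $\car k=0$ and resolution of singularities actually enter) to produce a smooth projective irreducible $\widetilde{Y'}$ such that \emph{both} $\widetilde{Y'}_{k(X)}$ and $X_{k(\widetilde{Y'})}$ have zero-cycles of degree coprime to $p$. Notice the zero-cycle on $\widetilde{Y'}$ is only needed over $k(X)$, never over $k$; minimality is precisely what lets one push the zero-cycle off the exceptional locus of the resolution, because the exceptional locus has strictly smaller dimension and so, by minimality, carries only zero-cycles of degree divisible by $p$ over $k(X)$. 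Rost nilpotence for the generically split $M$ then gives $M$ as an upper summand of $\M(\widetilde{Y'})$, so $\dim M\le\dim Y'$; the dimension-$d$ rational cycle from the transpose trick forces $\dim Y'\le d=\dim M$ by minimality, and $Y=\widetilde{Y'}$ does the job. To complete your proof you would need to replace the direct construction of $\alpha,\beta$ with this minimality-plus-transfer argument or some equivalent; the direct approach via Lemma~\ref{lx} genuinely asks for more rationality than the setup provides.
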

\begin{proof}
Let $Y'$ be a closed irreducible subvariety of $X$ of minimal dimension with respect
to the property that $Y'_{k(X)}$ has a zero-cycle of degree coprime to $p$.

By \cite[Lemma~7.1]{Sem09} there exists a smooth projective irreducible variety
$\widetilde{Y'}$ birational to $Y'$ such that both $\widetilde{Y'}_{k(X)}$
and $X_{k(\widetilde{Y'})}$ have zero-cycles of degree coprime to $p$.
Since the upper motive $M$ of $X$ is generically split, Rost nilpotence
holds for its endomorphism ring, i.e., the kernel of the natural map
$\End(M)\to\End(\overline M)$ consists of nilpotent correspondences by \cite[Prop.~3.1]{ViZ}.
Therefore $M$ is also an upper direct summand of $\widetilde{Y'}$.
Hence, $\dim Y'=\dim \widetilde Y'\ge\dim M$.

Let now $Y''$ be the generic point of $M$ (see Lemma~\ref{genpt2}).
Obviously, $Y''_{k(X)}$ in not $0$ in $\Ch(X_{k(X)})$, and therefore without loss of generality
we can assume that $Y''$ is represented by a closed subvariety of $X$, which we denote
by the same letter.
By \cite[Remark~5.6]{KM06} the variety $Y''$ has the property that $Y''_{k(X)}$
has a zero-cycle of degree coprime to $p$. Since $\dim Y''=\dim M$, the dimension
of $Y''$ is minimal with respect to this property.

Therefore by \cite[Lemma~7.1]{Sem09} there exists a smooth projective irreducible variety $Y$ birational
to $Y''$ with required properties.
\end{proof}

The following statement for $p=2$ might be called a {\it binary motive theorem}.

\begin{cor}\label{cor1}
Assume that $p\in\{2,3\}$ and $\car k=0$.
Let $X$ be a smooth projective irreducible variety with no zero-cycles of degree coprime to $p$
and $M$ a direct summand of $\M(X)$. If
$M_{k(X)}\simeq\bigoplus_{i=0}^{p-1}\ff_p(bi)$ for some integer $b$, then $\dim M=p^n-1$ for some $n$.$\hfill\qed$
\end{cor}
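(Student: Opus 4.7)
The plan is to reduce to Rost's degree theorem (Lemma~\ref{le62}) by producing a variety $Y$ of dimension $\dim M$ which admits a weak special correspondence, then invoking that theorem on $Y$.

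If $b=0$ then $\dim M = 0 = p^0-1$ and the conclusion is immediate, so we may assume $b \geq 1$. In that case $M$ is a generically split, upper summand of $\M(X)$, and its $k(X)$-splitting $\bigoplus_{i=0}^{p-1}\ff_p(bi)$ has $\ff_p(0)$ as its zeroth piece and strictly positive indexes $I = \{b, 2b, \ldots, (p-1)b\}$ for the remaining summands. The third lemma of this section therefore produces a smooth projective variety $Y/k$ such that $M$ is an upper direct summand of $\M(Y)$ and $\dim Y = \dim M = b(p-1)$.

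Next I would apply the second lemma of this section to $Y$, with the projector $\pi$ on $Y$ that realizes $M$. The dimension hypothesis is immediate. The construction of $Y$ (via Lemma~7.1 of \cite{Sem09}) presents $Y$ as a smooth projective variety birational to a closed subvariety $Y'' \subset X$ and ensures that both $X_{k(Y)}$ and $Y_{k(X)}$ carry zero-cycles of degree coprime to $p$. From this I would deduce two things. First, $Y$ has no zero-cycle of degree coprime to $p$: any such cycle on $Y$ would push forward, via the proper morphism $Y \to Y'' \hookrightarrow X$, to a zero-cycle of the same degree on $X$, contradicting the hypothesis. Second, the zero-cycle of degree coprime to $p$ on $X_{k(Y)}$ produces a closed point of $X_{k(Y)}$ whose residue field $F$ is a finite extension of $k(Y)$ of degree coprime to $p$ over which $X$ acquires a rational point, and hence over which $M$ splits; combining this with Rost nilpotence for the generically split motive $M$ and the fact that $[F:k(Y)]$ is a unit mod $p$, a standard transfer argument forces $(Y_{k(Y)}, \pi_{k(Y)})$ to be a sum of Tate motives, whose shape $\bigoplus_{i=0}^{p-1}\ff_p(bi)$ is then fixed by the invariance of the Poincar\'e polynomial under base change.

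With these hypotheses verified, the second lemma of this section yields a weak special correspondence on $Y$, and Rost's Lemma~\ref{le62} then forces $\dim Y = p^n-1$ for some $n$. Since $\dim M = \dim Y$, this completes the proof. The main technical obstacle is the transfer step used to propagate the Tate splitting of $M$ from $k(X)$ to $k(Y)$; once it is in place, the rest is a direct chain of invocations of the lemmas of this section.
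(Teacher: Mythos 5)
Your proposal is correct and follows exactly the route the paper intends (the corollary is stated with no separate proof precisely because it is the chain you describe: the third lemma produces $Y$ of dimension $b(p-1)$ carrying $M$ as an upper summand, the second lemma then yields a weak special correspondence on $Y$, and Lemma~\ref{le62} gives $\dim Y = p^n-1$). The two intermediate checks you supply — that $Y$ has no prime-to-$p$ zero-cycles, via pushforward along the resolution map $Y\to Y''\hookrightarrow X$ furnished by \cite[Lemma~7.1]{Sem09}, and that $M_{k(Y)}$ is split, via a specialization at an $F$-point of $X$ (hidden in your ``hence $M$ splits'') followed by corestriction along $F/k(Y)$ — are precisely the routine verifications the authors leave implicit.
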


\begin{prop}\label{prpr}
Let $G$ be a split semisimple algebraic group of inner type over a field $k$ with $\car k=0$ and
$\xi\in H^1(k,G)$. Let $p\in\{2,3\}$. Consider a twisted $_\xi G$-homogeneous flag variety $X$
and write $$\M(X_{k(X)})\simeq\oplus_{i\in I}\ff_p(i)\bigoplus\oplus_{j\in J}N_j$$
with indecomposable direct summands $N_j$ of positive dimension.

Assume that the following conditions hold:
\begin{enumerate}
\item For all $j$ the motives $N_j$ are defined over $k$.
Moreover, there exist twisted flag varieties $Y_j$ over $k$ such that $N_j=\U(Y_j)$ and
every cycle in $\Ch(\overline{Y_j\times X})$ which is defined over $k(Y_j)(X_{k(Y_j)})$ is defined over $k(Y_j)$.
\item The variety $X$ has no zero-cycles of degree coprime to $p$.
\item \label{prpr.3} Let $Q(t)$ denote the Poincar\'e polynomial of the (graded by codimension)
subgroup of $\Ch^*(\BX)$ generated by the rational cycles of the first shell.
Assume $$\frac{\sum_{i\in I}t^i}{Q(t)}=\sum_{l=0}^{p-1}t^{bl}$$ for some $b$.
\end{enumerate}
Then $b=\frac{p^n-1}{p-1}$ for some integer $n$.

\end{prop}
\begin{proof}
Since by assumption the motives $N_j$ are defined over $k$, we use for simplicity the same notation $N_j$ over $k$ and over $k(X)$.

Since $N_j$ are defined over $k$, are indecomposable over $k(X)$ and have positive dimension,
we can apply Proposition~\ref{lekarp2}. So, 
\[
\M(X)\simeq U\oplus\bigoplus_{j\in J}N_j
\] over $k$, where $U$ is
a motive with Poincar\'e polynomial $\sum_{i\in I}t^i$, since by our assumptions
$U_{k(X)}\simeq\oplus_{i\in I}\ff_p(i)$.

It follows from Theorem~\ref{t1} that $U\simeq\oplus_{s\in S} M(s)$ for some motive
$M$ and $Q(t)=\sum_{s\in S}t^s$. In particular, by assumption \eqref{prpr.3}, $P(M,t)=\sum_{l=0}^{p-1}t^{bl}$.
The proposition follows now from Corollary~\ref{cor1}.
\end{proof}

\section{Applications to motives of twisted flag varieties: type $\E_6$}\label{s8}

The goal of this section is to provide a complete classification of all
possible motivic decompositions of twisted $G$-homogeneous flag varieties for $G$
a group of inner type $\E_6$.
Note that with $\ff_p$-coefficients and $p \ne 2,3$,
every twisted $G$-homogeneous flag variety is a direct sum of Tate motives, and
the case $p=2$ was settled in \cite[p.~1048]{PSZ}. Therefore we only consider $\ff_3$-coefficients here.
All decomposition types are collected in Table \ref{e6.dec}.  

Throughout we will refer to the \emph{Tits algebra $A$ of $G$}, by which we mean a Tits
algebra for the vertex $1$ in the sense of \cite[6.4.1]{Ti71}. (This is a special case of the more general theory of Tits algebras from
\cite{Ti71} or \cite[\S27]{KMRT}.) This algebra $A$ is a central simple algebra of degree $27$ and
is determined up to isomorphism or anti-isomorphism by $G$. By $D$ we denote the underlying central simple division
algebra.

\begin{table}[hbtp]
\begin{center}
\begin{tabular}{c|r|c}
$J_3(G)$ & $\Theta$ & $\M(X_\Theta)$ \\
\hline
$(2,1)$ & $2$ & $M_{2,1}\oplus M_{2,1}(1)$\\
 & $4$ & $M_{2,1}\oplus(\oplus_{j\in J^{2,1}} R_{2,1}(j))\oplus M_{2,1}(9)$ \\
 & $\{2,4\}$ & $\M(X_4)\oplus \M(X_4)(1)$\\
 & any other & $\bigoplus_{i\in I^{2,1}_\Theta}R_{2,1}(i)$\\
\hline
$(1,1)$ & $2$ & $M_{1,1}\oplus(\oplus_{i=4}^7R_{1,1}(i))\oplus M_{1,1}(1)$\\
 & $4$ & $M_{1,1}\oplus(\oplus_{j\in J^{1,1}} R_{1,1}(j))\oplus M_{1,1}(9)$\\
 & $\{2,4\}$ & $\M(X_4)\oplus \M(X_4)(1)$\\
 & any other & $\bigoplus_{i\in I^{1,1}_\Theta}R_{1,1}(i)$\\
\hline
$(0,1)$ & any & $\bigoplus_{i\in I^{0,1}_\Theta} R_{0,1}(i)$ \\
\hline
$(1,0)$ & $2$ & $\bigoplus_{i=0,1,10,11,20,21}\ff_3(i)\oplus\bigoplus_{j\in
J^{1,0}_2}\M(\SB(D))(j)$ \\
 & $4$ &  $\bigoplus_{i=0,1,9,10,10,11,19,20,20,21,29,30}\ff_3(i)\oplus\bigoplus_{j\in J^{1,0}_4}\M(\SB(D))(j)$\\
 & $\{2,4\}$ & $\M(X_4)\oplus \M(X_4)(1)$\\
 & any other & $\bigoplus_{i\in I^{1,0}_\Theta}\M(\SB(D))(i)$\\
\hline
$(0,0)$ & any & $\oplus_{i\in I^{0,0}_\Theta}\ff_3(i)$
\end{tabular}
\caption{Motivic decomposition of twisted flag varieties of $\E_6$ mod $3$} \label{e6.dec}
\end{center}
\end{table}

\begin{table}[hbtp]
\begin{center}
\begin{tabular}{c|l}
Motive & Poincar\'e polynomial \\
\hline
$M_{2,1}$ & $\frac{(t^4+1)(t^{12}-1)(t^6+t^3+1)}{t^2-1}$\\
$M_{1,1}$ & ${\scriptstyle t^{20}+t^{18}+t^{17}+t^{16}+t^{14}+t^{13}+
t^{12}+t^{11}+2t^{10}+t^9+t^8+t^7+t^6+t^4+t^3+t^2+1}$\\
$R_{j_1,j_2}$ & $\frac{(t^{3^{j_1}}-1)(t^{4\cdot 3^{j_2}}-1)}{(t-1)(t^4-1)}$
\end{tabular}
\caption{Poincar\'e polynomials of some motives from Table \ref{e6.dec}} \label{e6.poin}
\end{center}
\end{table}

\begin{table}[hbtp]
\begin{center}
\begin{tabular}{c|l}
Multiset of indexes & Polynomial \\
\hline
$I_\Theta^{j_1,j_2}$ &$\frac{P(X_\Theta,t)}{P(R_{j_1,j_2},t)}$\\
$J^{j_1,1}$ & $\frac{P(X_4,t)-P(M_{j_1,1},t)(1+t^9)}{P(R_{j_1,1},t)}$\\
$J^{1,0}_2$ & $\frac{P(X_2,t)-(1+t+t^{10}+t^{11}+t^{20}+t^{21})}{1+t+t^2}$\\
$J^{1,0}_4$ & $\frac{P(X_4,t)-(1+t+t^{10}+t^{11}+t^{20}+t^{21})(1+t^9)}{1+t+t^2}$
\end{tabular}
\caption{Multisets of indexes appearing in Table \ref{e6.dec}} \label{e6.multi}
\end{center}
\end{table}

\subsection*{Left column: the $J$-invariant}
Let $G_0$ be a split semisimple algebraic group over $k$ and $p$ be a prime.
Denote $\overline G=G_0\times_k\ksep$, where $\ksep$ is a separable closure
of $k$.
It is known that $$\Ch^*(\overline G)\simeq{\ff_p}[x_1,\ldots,x_r]/(x_1^{p^{k_1}},\ldots,x_r^{p^{k_r}})$$
with $\deg x_i=d_i$ for some integers $r$, $k_i$, and $d_i$. We order the generators
so that $d_1\le\ldots\le d_r$ and fix one such isomorphism between $\Ch^*(\overline G)$
and this polynomial ring.

Let now $\xi\in Z^1(k,G_0)$ be a cocycle and consider the composite map
$$\Ch({_\xi{(G_0/B)}})\xrightarrow{\res}\Ch({_\xi{(G_0/B)}}\times_k\ksep)
\xrightarrow{\simeq}\Ch(G_0/B\times_k\ksep)\to\Ch(\overline G),$$
where $B$ is a Borel subgroup of $G_0$ defined over $k$, the first map is
the restriction map, the second map is induced by the isomorphism
$${_\xi{(G_0/B)}}\times_k\ksep\simeq G_0/B\times_k\ksep$$ given by $\xi$,
and the third map is induced by the canonical quotient map.
According to \cite[Definition~4.6]{PSZ} one can associate an invariant
$$J_p(\xi)=(j_1,\ldots,j_r)\in\zz^r$$ which measures the ``size'' of the
image of this composite map.
It does not depend on the choice of a separable closure $\ksep$.

Formally speaking, $J_p(\xi)$ is an invariant of $\xi$, not of $_\xi(G_0)$.  But if $G_0$ is simple and not of type $\D$ or $p \ne 2$, then the
degrees $d_i$ are pairwise distinct, and
it is a well-defined invariant of the twisted form $G={_\xi(G_0)}$
and we denote this invariant by $J_p(G)$.  For the excluded case where $G_0$ has type $\D$ and $p = 2$, see \cite{QSZ}.

We remark that some constraints on the $J$-invariants are classified in \cite[Table~4.13]{PSZ}.
E.g., if $G_0$ (equivalently, $G$) is adjoint of type $\E_6$ and $p=3$, then $r=2$, $d_1=1$, $d_2=4$,
$k_1=2$, $k_2=1$, $j_1\in\{0,1,2\}$, and $j_2\in\{0,1\}$.  We prove below that there are actually further constraints on  the $J$-invariant, see e.g.\ Corollary \ref{cor11}.

\subsection*{Remaining columns}
For the second column, recall that the simple roots of $\E_6$ are numbered as in the diagram
\begin{equation} \label{E6.roots}
\begin{picture}(7,1.9)
    \multiput(1,0.7)(1,0){5}{\circle*{\darkrad}}
    \put(3,1.45){\circle*{\darkrad}}

    \put(1,.7){\line(1,0){4}}
    \put(3,1.45){\line(0,-1){0.75}}
    
    \put(1,0){\makebox(0,0.4)[b]{$1$}}
    \put(2,0){\makebox(0,0.4)[b]{$3$}}
    \put(3,0){\makebox(0,0.4)[b]{$4$}}
    \put(4,0){\makebox(0,0.4)[b]{$5$}}
    \put(5,0){\makebox(0,0.4)[b]{$6$}}
    \put(3.3,1.3){\makebox(0,0.4)[b]{$2$}}
\end{picture}
\end{equation}
The motives $M_{j_1,j_2}$ and $R_{j_1,j_2}$ 
listed in the third column are indecomposable, and the latter is the upper motive
of the variety of Borel subgroups. Their Poincar\'e polynomials are given in Table \ref{e6.poin}.
The multisets of indexes $I_\Theta^{j_1,j_2}$ and $J^{j_1,j_2}$ in Table \ref{e6.dec} are defined as follows:
an integer $i$ appears in the multiset $s$ times iff $s$ is the coefficient at $t^i$
of the respective polynomial given in Table \ref{e6.multi}.

\smallskip

Each row of Table \ref{e6.dec} occurs over a suitable field for a suitable group.  The rest of this section and the next section are devoted to the proof of these tables.

By \cite[Prop.~4.2]{PS10} the Tits algebra $A$ is split iff the first slot $j_1$ in $J_3(G)$ equals $0$.
If $j_1=0$, then every projective homogeneous $G$-variety is generically split over a field extension
of degree coprime to $3$ and this case
was settled in \cite{PSZ}. This immediately gives all rows of Table~\ref{e6.dec} with $j_1=0$.

\subsection*{Picard groups and Tits algebras}
In this article we use some relations between rationality of the Picard groups of twisted flag varieties and
their Tits algebras, see \cite{MT95}.
E.g., if all Tits algebras of a group $G$ of inner type are split
algebras, then the Picard groups of all twisted flag varieties for the group $G$ are rational.

In this section, $G$ has inner type $\E_6$, and it follows from \cite{MT95} that the Picard groups of varieties
$X_2$, $X_4$, and $X_{2,4}$ are always rational.

We start now with some general observations.

\begin{lem}
Let $\Delta$ be a Dynkin diagram (not necessarily of type $\E_6$) and $\Psi \subseteq\Theta\subseteq\Delta$
two subsets of its vertices.
Assume that $X_\Theta$ has a rational point over $k(X_\Psi)$, and
$$P(X_\Theta,t)/P(X_\Psi,t)=t+1.$$
Then $\M(X_\Theta)=\M(X_\Psi)\oplus \M(X_\Psi)(1)$.
\end{lem}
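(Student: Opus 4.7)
The plan is to show that $\pi\colon X_\Theta \to X_\Psi$ is a Zariski-locally trivial $\mathbb{P}^1$-bundle and then invoke the projective bundle formula $\M(\mathbb{P}(E)) \simeq \M(X_\Psi) \oplus \M(X_\Psi)(1)$. In the paper's convention, the inclusion $\Psi \subseteq \Theta$ corresponds to $P_\Theta \subseteq P_\Psi$, producing a $G$-equivariant projection $\pi\colon X_\Theta \to X_\Psi$ whose fibers are all isomorphic (by $G$-equivariance together with transitivity of $G$ on $X_\Psi$). Each geometric fiber is a smooth projective variety with Poincar\'e polynomial $1+t$, hence $\mathbb{P}^1$, so $\pi$ is smooth proper with geometric fibers $\mathbb{P}^1$, i.e.\ a conic bundle.

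The essential step is to produce a \emph{generic} section of $\pi$; equivalently, to show that the generic fiber $C$ is isomorphic to $\mathbb{P}^1$ over $k(X_\Psi)$. The hypothesis provides some $k(X_\Psi)$-point $p$ of $X_\Theta$, whose image $q := \pi(p)$ is a $k(X_\Psi)$-point of $X_\Psi$, not a priori the generic point $\eta$. However, by the Borel--Tits conjugacy theorem, $G(k(X_\Psi))$ acts transitively on the set of parabolic subgroups of type $\Psi$ defined over $k(X_\Psi)$, which is identified with $X_\Psi(k(X_\Psi))$. Choosing $g \in G(k(X_\Psi))$ that carries $q$ to $\eta$, the translated point $g\cdot p$ lies over $\eta$; being a rational point of the conic $C$, it forces $C \simeq \mathbb{P}^1$.

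With a rational section in hand, the (scheme-theoretic) closure of its image is a Weil divisor in the smooth variety $X_\Theta$, corresponding to a line bundle $L$ whose restriction to each fiber has degree $1$ by flatness of $\pi$ and constancy of the Hilbert polynomial. Then $E := \pi_*L$ is a rank-$2$ vector bundle on $X_\Psi$ with $X_\Theta \simeq \mathbb{P}(E)$, and the projective bundle formula gives the asserted decomposition. The principal obstacle is the Borel--Tits step: a bare $k(X_\Psi)$-point of $X_\Theta$ need not sit over $\eta$, and it is precisely transitivity of $G(k(X_\Psi))$ on $X_\Psi(k(X_\Psi))$ that converts it into a generic section. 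Everything else, namely extending a rational section of a smooth proper conic bundle with split generic fiber to a global projective-bundle trivialisation, is a standard Weil divisor/flatness argument on the smooth total space $X_\Theta$.
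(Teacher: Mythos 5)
Your proposal is correct and reaches the same intermediate reduction as the paper (the generic fiber of $\pi\colon X_\Theta\to X_\Psi$ is a smooth projective curve with Poincar\'e polynomial $1+t$, and the given $k(X_\Psi)$-point of $X_\Theta$ shows it to be $\pp^1$), but diverges on how that rational point produces a point of the \emph{generic} fiber and on how one passes from ``generic fiber is $\pp^1$'' to the motivic decomposition. On the first of these you are filling a genuine gap that the paper leaves silent: the paper simply asserts ``$Z$ has a rational point'' as a consequence of the assumptions, whereas one really does need the Borel--Tits conjugacy theorem (transitivity of $G(k(X_\Psi))$ on $X_\Psi(k(X_\Psi))$) to translate the given point of $X_\Theta(k(X_\Psi))$ so it lies over the tautological generic point $\eta$. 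On the second, the paper closes the argument by citing \cite[Lemma~3.2 and Lemma~3.3]{PSZ}, which give a relative (nilpotence-based) descent of the $\pp^1$-decomposition along the fibration; you instead give a self-contained geometric argument, taking the closure of the rational section to get a Cartier divisor $D$ on the smooth variety $X_\Theta$, showing $L=\mathcal{O}(D)$ has fiberwise degree $1$ by flatness and local constancy of Euler characteristics, and then recognising $X_\Theta\simeq\pp(\pi_*L)$ via Grauert and the surjection $\pi^*\pi_*L\twoheadrightarrow L$, so that the projective bundle formula finishes. Both routes are valid; yours is more elementary and explicit (it avoids the nilpotence machinery of \cite{PSZ}), at the cost of being longer; the paper's is shorter and fits the toolbox it has already built. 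One minor stylistic remark: the Brauer-theoretic rephrasing you gesture at (a smooth conic bundle over a smooth base is $\pp(E)$ iff its Brauer class dies at the generic point) would also work, but the $\pi_*L$ argument you actually give is cleaner.
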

\begin{proof}
Since $\Psi\subseteq\Theta$, we have a natural map $f\colon X_\Theta\to X_\Psi$.
The fibre $Z$ of $f$ over $k(X_\Psi)$ is a twisted flag
variety over $k(X_\Psi)$. By the assumptions the Poincar\'e polynomial
$P(Z,t)=P(X_\Theta,t)/P(X_\Psi,t)=t+1$, and $Z$ has a rational point.
Therefore $Z$ is isomorphic to $\mathbb{P}^1$.

Now by \cite[Lemma~3.3]{PSZ} $f$ is a locally trivial fibration with fiber $\mathbb{P}^1$.
Therefore \cite[Lemma~3.2]{PSZ} implies the claim.
\end{proof}

This lemma with $\Psi = \{ 4 \}$ and $\Theta = \{ 2, 4 \}$ and the classification
of Tits indices immediately imply all rows of Table \ref{e6.dec} for $X_{2,4}$.

\begin{lem}\label{l2}
If $X_2$ has a zero-cycle of degree coprime to 3, then $J_3(G) = (0,0)$ or $(1,0)$ and the index of $A$ is $1$ or $3$ respectively.
\end{lem}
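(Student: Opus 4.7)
My plan is to reduce to the case where vertex $2$ lies in the Tits index of $G$ and then appeal to Tits' classification of indexes for inner type $E_6$.

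First, since $X_2$ has a zero-cycle of degree prime to $3$, there exists a finite extension $L/k$ of degree prime to $3$ with $X_2(L)\neq\emptyset$, i.e., vertex $2$ lies in the Tits index of $G_L$. I would verify that both $J_3(G)$ and the $3$-primary component of $\ind(A)$ are unchanged by such an extension---for the $J$-invariant by a restriction-corestriction argument applied to the image of $\Ch^*(G_0/B)$ in $\Ch^*(\overline{G})$ with $\ff_3$-coefficients (the composition is multiplication by $[L{:}k]$, invertible modulo $3$), and for the Brauer index by the standard behaviour of $p$-primary Brauer classes under prime-to-$p$ extensions---so one may assume that vertex $2$ is already in the Tits index of $G$.

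Second, by Tits' classification \cite{Ti66}, the possible indexes for an inner-type $E_6$ group are the empty index, $\{1,6\}$ (with anisotropic kernel of type $D_4$ and split Tits algebra), $\{2,4\}$ (with anisotropic kernel of type $A_2\times A_2$), and the full index $\Delta$. Of these, only $\{2,4\}$ and $\Delta$ contain vertex $2$. If $G$ is split, then $J_3(G)=(0,0)$ and $\ind(A)=1$, as required; otherwise $G$ has Tits index $\{2,4\}$ with anisotropic kernel $G_\an$ of type $A_2\times A_2$, whose two simple factors carry Tits division algebras $B_1$, $B_2$ of degree $3$.

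Third, in the $\{2,4\}$ case, the Tits class of $G$ in $H^2(k,\mu_3)$ is the image of the Tits class of $G_\an$ under the map on group centers $Z(\widetilde{G_\an})\to Z(\widetilde{G})$, which one checks identifies $[A]$ with a specific nontrivial $\ff_3$-linear combination of $[B_1]$ and $[B_2]$; since the $B_i$ are division algebras of degree $3$, this forces $\ind(A)=3$. For the $J$-invariant, I would combine the Picard-rationality of $X_2,X_4,X_{2,4}$ recalled earlier in the paper with the identification $\Ch^*(\overline{G})_{\ff_3}\simeq\ff_3[x_1,x_2]/(x_1^9,x_2^3)$ and the explicit rational subring of $\Ch^*(\overline{G_\an})_{\ff_3}\simeq\ff_3[y_1,y_2]/(y_1^3,y_2^3)$ dictated by the non-splitness of $B_1$ and $B_2$: the conclusion is that $x_2$ and $x_1^3$ are rational over $k$ while $x_1$ is not, which is exactly $J_3(G)=(1,0)$.

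The main obstacle will be this last step, namely pinning down precisely $J_3(G)=(1,0)$ by tracking the restriction $\Ch^*(\overline{G})_{\ff_3}\to\Ch^*(\overline{G_\an})_{\ff_3}$ and combining the Tits-algebra-based rationality of Picard elements from \cite[\S4]{PS10} with the $J$-invariants of the two anisotropic type-$A_2$ factors. The reduction step and the enumeration of Tits indexes are comparatively routine.
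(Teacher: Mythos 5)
Your plan matches the paper's route: reduce via a prime-to-$3$ extension to the case where vertex~$2$ lies in the Tits index (the paper cites \cite[Prop.~5.18(2)]{PSZ} for the invariance of $J_3$ rather than redoing the restriction-corestriction argument), then invoke Tits' classification to see that the index is $\Delta$ (so $J_3(G)=(0,0)$, $\ind A=1$) or $\{2,4\}$ with anisotropic kernel of type $A_2\times A_2$ and $\ind A=3$. The ``main obstacle'' you flag in the last step is exactly what the paper disposes of by citing \cite[Prop.~3.9(2)]{PS10}: once $\ind A=3$ gives $J_3(\PGL_1(A))=(1)$, that proposition yields $J_3(G)=(1,0)$ directly, so you can replace your proposed hand computation with the restriction $\Ch^*(\overline{G})\to\Ch^*(\overline{G_\an})$ (which in any case needs care, e.g.\ the two $A_2$-factors carry opposite Brauer classes, so one must check that the relevant combination giving $[A]$ is nonzero) by that citation.
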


\begin{proof}
As $J_3(G)$ is unchanged if we replace $k$ with an extension of degree coprime
to $3$ \cite[Prop.~5.18(2)]{PSZ}, we may assume that $X_2$ has a $k$-point.
By the classification of Tits indexes, $G$ is split or has semisimple
anisotropic kernel of type $2\A_2$.

In the second case $\ind A = 3$ and therefore $J_3(\PGL_1(A))=(1)$.
Thus, by \cite[Prop.~3.9(2)]{PS10} $J_3(G) = (1,0)$.
\end{proof}

\begin{lem}\label{l3}
The upper motives of $X_2$ and $X_4$ are isomorphic. If every zero-cycle on $X_2$
has degree divisible by $3$ and the Tits algebra of $G$ is not split, then the dimension
of its upper motive equals $20$.
\end{lem}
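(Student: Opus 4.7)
The plan is to treat the two assertions separately. For the isomorphism $M_{X_2}\simeq M_{X_4}$, I will verify that each of $X_2,X_4$ carries a zero-cycle of degree coprime to $3$ over the function field of the other, then invoke the standard consequence of Propositions~\ref{lekarp1} and~\ref{lekarp2} that upper motives are then isomorphic. For $X_2$ over $k(X_4)$: the preceding Lemma exhibits $X_{2,4}\to X_4$ as a $\mathbb{P}^1$-bundle, so sections supply $k(X_4)$-points of $X_{2,4}$ and hence of $X_2$. For $X_4$ over $k(X_2)$: I apply Lemma~\ref{l2} over $k(X_2)$ (where $X_2$ trivially has a rational point), which forces, after an extension of degree coprime to $3$, that $G_{k(X_2)}$ is split or has anisotropic kernel of type $2\A_2$; by the classification of inner $\E_6$ Tits indices, the Tits index then contains vertex $4$, so $X_4$ has a rational point.

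For the dimension, I sandwich $\dim M_{X_2}$ between two bounds. The upper bound $\dim M_{X_2}\le 20$ follows since if $\dim M_{X_2}=21$ then $M_{X_2,\bar k}$ contains a Tate summand $\ff_3(21)$, and the underlying rational projector would produce $\pt$ as the image of a rational cycle, contradicting the hypothesis. For the lower bound, since $\Pic(X_2)$ is rational for inner $\E_6$, the hyperplane class $H\in\Ch^1(\BX_2)$ is rational. I will verify $H\in\SH_{\{2\}}$: over $k(X_2)$, the class of a line through a rational point supplies a rational dual $c\in\Ch^{20}(\BX_2)$ with $\deg(H\cdot c)=1$, while the premise precludes any rational dual over $k$. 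Theorem~\ref{t1} applied with $b=1$, $\alpha=H$, $t=1$ then yields a summand $M_{X_2}(1)$ of $\M(X_2)$ with generic point $H$. Combining this with Poincar\'e self-duality of $\M(X_2)$ and the self-duality up to Tate shift of the indecomposable upper motive gives a summand $M_{X_2}(21-\dim M_{X_2})$ whose top cycle is $\pt$; by the lemma preceding Section 4 the generic point of this summand at codim $21-\dim M_{X_2}$ is a $k$-rational first-shell class. The only such classes in low codimension are spanned by $1$ (codim $0$, giving $\dim M_{X_2}=21$, excluded) and by $H$ (codim $1$, giving $\dim M_{X_2}=20$).

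The main obstacle is showing that no $k$-rational class of codimension $\ge 2$ lies in the first shell. I expect this to follow from Chow-ring computations on $\BX_2$ via the algorithm of Section~\ref{secSteenrod}: the premise forces $\deg X_2\equiv 0\pmod 3$, so $H^{20}$ and $H^{21}$ vanish modulo $3$, and the structure of $\Ch^*(\BX_2)\otimes\ff_3$ together with the image of multiplication by $H$ in higher codimensions should rule out rational first-shell duals for codim-$k$ classes with $k\ge 2$. A careful accounting may require splitting into cases by the value of $J_3(G)\in\{(1,1),(2,0),(2,1)\}$ permitted by the hypotheses, and invoking Lemma~\ref{l2} again to control rationality over $k(X_2)$ versus $k$.
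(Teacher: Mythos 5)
Your proposal diverges substantially from the paper's argument and, as you yourself acknowledge, it is incomplete.

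For the isomorphism of upper motives, your route via Propositions~\ref{lekarp1} and \ref{lekarp2} is workable, though more elaborate than needed: the paper simply observes that the classification of Tits indices for inner type $\E_6$ shows vertex $2$ lies in the Tits index if and only if vertex $4$ does, so $X_2$ and $X_4$ become isotropic over exactly the same extensions, and Karpenko's theorem immediately gives isomorphic upper motives.

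For the dimension computation you attempt a sandwich argument (upper bound $\le 20$ by a duality/zero-cycle contradiction, lower bound via shells and self-duality), but the crux of the lower bound --- ruling out rational first-shell classes in codimension $\ge 2$ --- is left unproven. You write ``I expect this to follow from Chow-ring computations'' and ``a careful accounting may require splitting into cases by the value of $J_3(G)$.'' That is a genuine gap, not a routine verification, and the case analysis you gesture at is exactly where the difficulty lies. Your upper-bound argument is also informal: ``the underlying rational projector would produce $\pt$ as the image of a rational cycle'' needs unpacking, since the generic point of the upper motive at codimension $0$ is just $1$, which is trivially rational, and extracting a rational $0$-cycle of degree $1$ from the putative $\ff_3(21)$ summand requires an explicit argument about the Poincar\'e dual summand.

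The paper avoids all of this with a short counting argument you do not use. From the Chernousov--Gille--Merkurjev decomposition of $\M(X_2)$ over $k(X_2)$, exactly six Tate summands appear, in codimensions $0,1,10,11,20,21$. Since $X_2$ has no zero-cycle of degree prime to $3$, the number of Tate summands of $M$ over $k(X_2)$ is divisible by $3$. The generator $h$ of $\Pic(X_2)$ is rational (since $\Pic$ of $X_2$ is always rational for inner $\E_6$), and Theorem~\ref{t1} then gives that $M(1)$ is also a summand of $\M(X_2)$, so $M$ and $M(1)$ between them account for all six Tate positions: three each, namely $\{0,10,20\}$ for $M$ and $\{1,11,21\}$ for $M(1)$. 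Hence $\dim M = 20$. This is self-contained and does not require the case-by-case Chow-ring analysis your plan would need to complete.
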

\begin{proof}
By hypothesis on $X_2$, $G$ is anisotropic and the Tits 3-indexes of $G_K$ as $K$ varies over all extensions
$K$ of $k$ are empty, all of $\Delta$, and $\{ 2, 4 \}$.  The claim on upper motives follows.  Moreover, as in Example \ref{tower}, there are (at most) two different big shells, the first shell $\SH_{\leqslant \{ 2 \}}$ and the last shell $\SH_{\leqslant\{1\}}$.

As before write $\U(X_2)$ for the upper motive of $X_2$.
An explicit computation of the decomposition of \cite[Theorem~7.5]{CGM} for $\M(X_2)$
shows that over $k(X_2)$ the motive of $X_2$ contains exactly six Tate motives:
$\ff_3$, $\ff_3(1)$, $\ff_3(10)$, $\ff_3(11)$, $\ff_3(20)$, and $\ff_3(21)$,
and, by assumption, the variety $X_2$ does not have a zero-cycle of degree coprime to $3$.
Therefore the number of Tate motives contained in $\U(X_2)$
over $k(X_2)$ is divisible by $3$.

Fix a generator $h$ of  the Picard group of $X_2$; it is unique up to sign.
This cycle is defined over $k$. Therefore, by Theorem~\ref{t1}
the motive $\U(X_2)(1)$ is a direct summand of $\M(X_2)$.
All this implies that $\dim\U(X_2)=20$.
\end{proof}

\begin{lem}\label{l4}
Let $J_3(G)=(j_1,j_2)$ with $j_1\ne 0$ and $M_{j_1,j_2}$ denote the upper motive of $X_2$.
If $J_3(G)\ne(1,0)$, then 
\begin{align*}
\M(X_2)&\simeq M_{j_1,j_2}\oplus M_{j_1,j_2}(1)\oplus
\bigoplus_{i\in I_1} R_{j_1,j_2}(i) \quad \text{and} \\
\M(X_4)&\simeq M_{j_1,j_2}\oplus M_{j_1,j_2}(9)\oplus
\bigoplus_{i\in I_2} R_{j_1,j_2}(i)
\end{align*}
for some multisets of indexes $I_1$ and $I_2$ (depending on $j_1, j_2$).
\end{lem}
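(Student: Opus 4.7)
The plan is to identify the common upper motive of $X_2$ and $X_4$, extract the $M$-type direct summands using Theorem~\ref{t1} together with Poincar\'e duality, and then classify the remaining summands via Proposition~\ref{p1}.

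By Lemma~\ref{l3}, the hypothesis $j_1\ne 0$ combined with $J_3(G)\ne (1,0)$ forces (via Lemma~\ref{l2}) that $X_2$ has no zero-cycle of degree coprime to $3$, so $M:=M_{j_1,j_2}$ is the upper motive of both $X_2$ and $X_4$, with $\dim M=20$. To produce $M(1)$ as a summand of $\M(X_2)$, I would take a generator $h$ of $\Ch^1(\overline{X}_2)$; since the Picard group of $X_2$ is rational for $\E_6$-varieties, $h$ is defined over $k$. Over $k(X_2)$ the variety $X_2$ has a rational point, which gives a cycle in $\Ch_1(\overline{X}_2)$ defined over $k(X_2)$ pairing with $h$ to degree $1$, so $h$ lies in the first shell $\SH_{\{2\}}$. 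Applying Theorem~\ref{t1} with $b=1$ (the generic point of $M$), $\alpha=h$, and $t=1$ then produces $M(1)$ as an indecomposable direct summand, giving $\M(X_2)\simeq M\oplus M(1)\oplus N_1$ for some motive $N_1$.

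For the $M(9)$ summand of $\M(X_4)$ I would use Poincar\'e duality rather than hunting for a codimension-$9$ rational cycle directly. Inspection of Table~\ref{e6.poin} shows that $P(M,t)$ is palindromic; combining the Poincar\'e duality isomorphism $\M(X_2)^\vee(21)\simeq\M(X_2)$ with the uniqueness of the indecomposable summand of $\M(X_2)$ containing $\Ch^0$ (namely $M$) forces $M^\vee(\dim M)\simeq M$. Since $M$ is a direct summand of $\M(X_4)$ by Lemma~\ref{l3} and $\dim X_4=29$, Poincar\'e duality on $X_4$ together with this self-duality produces $M(9)\simeq M^\vee(29)$ as another direct summand, so $\M(X_4)\simeq M\oplus M(9)\oplus N_2$.

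It remains to show that $N_1$ and $N_2$ decompose into Tate shifts of $R_{j_1,j_2}$. By Proposition~\ref{p1} each indecomposable summand of $N_i$ is a Tate shift of the upper motive $M_\Psi$ of some $X_\Psi$ with $\Psi\supsetneq\{2\}$ (resp.\ $\supsetneq\{4\}$). For $\Psi=\{2,4\}$ this upper motive again equals $M_{j_1,j_2}$ by the row for $X_{2,4}$ in Table~\ref{e6.dec}, already treated in this section. For every other such $\Psi$, the goal is to prove that $M_\Psi$ coincides with the upper motive $R_{j_1,j_2}$ of the Borel variety $X_\Delta$; by Proposition~\ref{lekarp2} applied to $X_\Psi$ and $X_\Delta$ this reduces to showing that $G$ becomes generically split modulo $3$ over $k(X_\Psi)$. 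This final step is the main technical obstacle and requires a careful case analysis using the classification of Tits indexes for inner type $\E_6$ together with the behavior of the $J$-invariant under the extensions $k\hookrightarrow k(X_\Psi)$; this is where the hypotheses $j_1\ne 0$ and $J_3(G)\ne(1,0)$ enter essentially.
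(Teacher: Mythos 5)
Your argument for $\M(X_2)$ is essentially the paper's: use that $h\in\Ch^1(\BX_2)$ is rational, observe it lies in the first shell, and apply Theorem~\ref{t1} to produce $M_{j_1,j_2}(1)$. (One small imprecision: a $k(X_2)$-rational point gives a class in $\Ch_0$, not $\Ch_1$; the fact that $h$ has a dual cycle defined over $k(X_2)$ comes instead from the fact that $\ff_3(1)$ is a Tate summand of $\M(X_2)_{k(X_2)}$ in the \cite{CGM} decomposition.)

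Your route to $M_{j_1,j_2}(9)$ in $\M(X_4)$ is genuinely different from the paper's and is attractive. You first derive the self-duality $M^\vee(\dim M)\simeq M$ from Poincar\'e duality for $\M(X_2)$ and the uniqueness of the upper motive (this is correct: $M^\vee(20)$ is an indecomposable summand of $\M(X_2)^\vee(21)\simeq\M(X_2)$ with $\Ch^0\neq 0$, hence must be $\simeq M$), and then transfer via $\M(X_4)^\vee(29)\simeq\M(X_4)$ to obtain $M(9)$ as a summand. The paper instead computes via \cite[Theorem~7.5]{CGM} that $\M(X_4)_{k(X_4)}$ contains \emph{exactly} the Tate motives $\ff_3(i)$ for $i\in\{0,9,10,19,20,29\}$. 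The difference matters: your duality argument shows $M$ and $M(9)$ each occur \emph{at least} once, but it does not exclude additional shifts $M(i)$ with $0<i<9$ (which by duality would come in pairs $M(i)\oplus M(9-i)$). The exact multiplicity is pinned down in the paper by the count of six Tate motives over $k(X_4)$, since each copy of $M$ contributes exactly three of them. This is a real gap in your proposal.

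The final step also has problems. Proposition~\ref{p1} does not say that the shell indices $\Psi$ strictly contain $\{2\}$ (resp.\ $\{4\}$). For $\E_6$ and $p=3$ the only nonempty shells of $X_2$ turn out to be $\SH_{\{2\}}$ and $\SH_{\{1\}}$ (this is established in the proof of Lemma~\ref{l3} using the classification of Tits indexes), and $\{1\}\not\supset\{2\}$. What one really needs is that the upper motive of $X_1$ coincides with the upper motive $R_{j_1,j_2}$ of the Borel variety, which holds because over $k(X_1)$ the semisimple anisotropic kernel has type $\D_4$ and hence is split modulo 3 after a degree-coprime-to-$3$ extension. You flag this as ``the main technical obstacle,'' but it is actually the shorter half of the argument; the part you treat as done (the exact count of $M$-shifts) is where the paper spends its real effort.
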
 
\begin{proof}
The formula for $X_2$ immediately follows from the proof of Lemma~\ref{l3}
and from Karpenko's theorem.

Consider now $X_4$. An explicit computation of the decomposition of \cite[Theorem~7.5]{CGM} for $\M(X_4)$
shows that over $k(X_4)$ its motive contains exactly $6$ Tate motives:
$\ff_3$, $\ff_3(9)$, $\ff_3(10)$, $\ff_3(19)$, $\ff_3(20)$, $\ff_3(29)$.
Since the upper motives of $X_2$ and $X_4$ are isomorphic, we get
$$\M(X_4)=M_{j_1,j_2}\oplus M_{j_1,j_2}(9)\oplus\bigoplus_{i\in I_2} R_{j_1,j_2}(i)$$
for some multiset of indexes $I_2$.
\end{proof}

Note that
\begin{align*}
P(\E_6/P_2,t)&=\tfrac{(t^4+1)(t^{12}-1)(t^6+t^3+1)}{t-1} \quad \text{and}\\
P(\E_6/P_4,t)&=\tfrac{(t^5-1)(t^3+1)(t^8-1)(t^6+t^3+1)(t^{12}-1)}{(t-1)(t^2-1)^2}.
\end{align*}
So, to finish the proof Tables \ref{e6.dec}--\ref{e6.multi}, it suffices to compute the Poincar\'e
polynomials of $M_{2,1}$ and $M_{1,1}$, to find motivic decompositions
for $J_3(G)=(1,0)$, and to exclude the case $J_3(G)=(2,0)$.

\begin{lem}\label{lll}
$P(M_{2,1},t)=\frac{(t^4+1)(t^{12}-1)(t^6+t^3+1)}{t^2-1}$.
\end{lem}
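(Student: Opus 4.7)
The plan is to combine Lemma~\ref{l4} with a short case analysis.

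By Lemma~\ref{l4}, $\M(X_2)\simeq M_{2,1}\oplus M_{2,1}(1)\oplus\bigoplus_{i\in I_1}R_{2,1}(i)$, which yields the polynomial identity
\[
(1+t)\,P(M_{2,1},t)+Q(t)\,P(R_{2,1},t)=P(X_2,t),
\]
where $Q(t):=\sum_{i\in I_1}t^i\in\zz[t]$ has non-negative coefficients. Since $P(X_2,t)=\tfrac{(t^4+1)(t^{12}-1)(t^6+t^3+1)}{t-1}$ is divisible by $1+t$ (as $P(X_2,-1)=0$), the quotient $P(X_2,t)/(1+t)=\tfrac{(t^4+1)(t^{12}-1)(t^6+t^3+1)}{t^2-1}$ is the polynomial in the statement. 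It therefore suffices to prove $Q\equiv 0$.

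To constrain $Q$, I would evaluate at $t=1$ and $t=-1$. From $P(X_2,1)=72$ and $P(R_{2,1},1)=27$, the parity of $72-2\,P(M_{2,1},1)=27\,Q(1)$ forces $Q(1)\in\{0,2\}$; from $P(X_2,-1)=0$ and $P(R_{2,1},-1)=3$, we get $Q(-1)=0$. By Lemma~\ref{l3}, $\dim M_{2,1}=20$, and since $M_{2,1}$ is a self-dual indecomposable summand with $P(M_{2,1})_0=1$ (the upper-motive property, using $\Ch^0(\BX_2)\simeq\ff_3$), the polynomial $P(M_{2,1},t)$ is palindromic of degree $20$ with leading coefficient $1$. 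Comparing the coefficients of $t^{21}$ on both sides of the identity then forces $\deg Q\le 4$. In the case $Q(1)=2$, the combinatorial constraints (sum $=2$, non-negative integer coefficients, $Q(-1)=0$) leave exactly the six candidates $Q(t)=t^p+t^q$ with $0\le p<q\le 4$ and $p+q$ odd.

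For each of these six candidates I would substitute into the identity, solve for $P(M_{2,1},t)$, and verify that some coefficient is negative, which is impossible for a Poincar\'e polynomial. A convenient choice of a negative coefficient is available in each case: if $p\in\{0,1\}$ then the coefficient of $t^{p+1}$ in $P(M_{2,1},t)$ equals $-1$; for $(p,q)=(2,3)$ the coefficient of $t^3$ is $-1$; and for $(p,q)=(3,4)$ the coefficient of $t^5$ is $-1$. These checks rule out $Q(1)=2$, whence $Q\equiv 0$ and the formula follows.

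The main obstacle is the last step---the finite case-by-case check---but it is a routine direct computation once the constraints $Q(1)\in\{0,2\}$, $Q(-1)=0$, and $\deg Q\le 4$ have been assembled.
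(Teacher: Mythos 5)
Your overall strategy --- deriving the polynomial identity $(1+t)\,P(M_{2,1},t)+Q(t)\,P(R_{2,1},t)=P(X_2,t)$ from Lemma~\ref{l4}, constraining $Q$ by evaluation at $t=\pm 1$ together with $\dim M_{2,1}=20$ from Lemma~\ref{l3} and palindromicity, and then finishing by a finite case check --- is a legitimate alternative to the paper's argument. However, the final step has a genuine gap: it is \emph{not} true that each of the six candidates produces a negative coefficient. For $Q(t)=1+t^3$ one computes
\[
\frac{P(X_2,t)-(1+t^3)\,P(R_{2,1},t)}{1+t}
 = t^{6}+t^{9}+t^{10}+t^{12}+t^{13}+t^{14}+t^{16}+t^{17}+t^{20},
\]
which has only non-negative coefficients (and your predicted coefficient $-1$ at $t^{p+1}=t^{1}$ is actually $0$). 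This candidate must instead be excluded because its constant term is $0$ rather than $1$, equivalently because the displayed polynomial is not palindromic of degree $20$ --- constraints you record but do not invoke in the case check. (Your predicted locations of the negative coefficient are also wrong in several of the other cases; for instance for $Q=t^2+t^3$ the first negative coefficient appears at $t^{9}$, not $t^{3}$. Those cases do at least contain a negative coefficient, so that error is only cosmetic.) The cleanest repair is to observe up front that $0\notin I_1$ (as $M_{2,1}$ is \emph{the} indecomposable summand with nonzero $\Ch^0$) and $1\notin I_1$ (since $\Ch^1(\BX_2)\cong\ff_3$ is one-dimensional and already occupied by $M_{2,1}(1)$); combined with $\deg Q\le 4$, $Q(-1)=0$ and $Q(1)\le 2$ this leaves only $Q=t^2+t^3$ and $Q=t^3+t^4$, both of which do yield a negative coefficient.

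For comparison, the paper's proof of Lemma~\ref{lll} is quite different. It invokes Theorem~\ref{t1}: if $R_{2,1}(i)$ occurs for $i=2$ or $3$, multiplying the generic point by the rational generator $h\in\Ch^1$ stays in the same shell and produces a further summand one codimension higher, so $I_1\ne\emptyset$ forces an index $\ge 4$ in $I_1$. Since $\dim R_{2,1}=16$, a summand $R_{2,1}(i)$ with $i\ge 4$ reaches codimension $\ge 20$, where the groups $\Ch^{20}(\BX_2)$ and $\Ch^{21}(\BX_2)$ are one-dimensional and already used by $M_{2,1}$ and $M_{2,1}(1)$, contradicting Corollary~\ref{c1}\eqref{f1}. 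Both arguments quietly rely on $I_1\cap\{0,1\}=\emptyset$; the paper's shell argument then collapses the remaining cases at once, whereas your approach, once repaired, reduces to an explicit two-case coefficient computation.
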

\begin{proof}
If $2\in I_1$ (in the notation of Lemma~\ref{l4}), then by Theorem~\ref{t1}, $3\in I_1$, since for any $\alpha\in\Ch^2(\overline X_2)$
one has $\alpha\cdot h\ne 0$. And if $3\in I_1$, then $4\in I_1$,
since for any $\beta\in\Ch^3(\overline X_2)$ one has $\beta\cdot h\ne 0$.

Thus, if $I_1$ is non-empty, then it contains an index $\ge 4$.
But the Poincar\'e polynomial of $R_{2,1}$ equals $(1+t^4+t^8)(t^9-1)/(t-1)$,
in particular, has dimension $16$.

But by Lemma~\ref{l4} we have $$P(X_2,t)=(t+1)P(M_{2,1},t)+t^mP(R_{2,1},t)+Q(t)$$
where $m\ge 4$, $\deg P(M_{2,1},t)=20$ by Lemma~\ref{l3} and the polynomial $Q(t)\in\zz[t]$ has non-negative
coefficients. Comparing the terms gives a contradiction.
\end{proof}

\begin{lem}\label{cm}
Assume $J_3(G)=(1,1)$. Then there exists a direct summand of the motive of $X_2$
starting in codimension $4$.
\end{lem}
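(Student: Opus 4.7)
The plan is to show that the rational class $h^4\in\Ch^4(\overline{X_2})$—where $h\in\Ch^1(X_2)$ is the hyperplane class, rational since the Picard group of $X_2$ is rational for inner type $\E_6$—is the generic point of an indecomposable direct summand of $\M(X_2)$, hence such a summand starts in codimension~$4$.

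First, I would verify that $h^4$ does not lie in the first shell $\SH_{\{2\}}$. Suppose for contradiction that it does. The upper motive $M_{1,1}$ of $X_2$ has generic point $1\in\Ch^0(\overline{X_2})$, which lies in $\SH_{\{2\}}$ (under our hypotheses $\SH_\emptyset$ is empty because $X_2$ has no zero-cycle of degree coprime to $3$, by Lemma~\ref{l3}). Then Theorem~\ref{t1} applied to $M_{1,1}$, its generic point $1$, and the rational cycle $\alpha = h^4$, would produce an indecomposable direct summand of $\M(X_2)$ isomorphic to $M_{1,1}(4)$. But $\dim M_{1,1}=20$ by Lemma~\ref{l3}, so $M_{1,1}(4)$ would contain a Tate summand $\ff_3(24)$ over a splitting field; since $\dim X_2=21$, such a summand cannot appear in $\M(X_2)$, a contradiction. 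By the shell classification in the proof of Lemma~\ref{l3}, the only remaining nonempty shell on $X_2$ is $\SH_{\{1\}}$, so $h^4\in\SH_{\{1\}}$.

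Second, to promote this shell membership into an actual direct summand with generic point $h^4$, I would use the Chernousov-Merkurjev formula (Theorem~\ref{tchme}) applied to $X_2\times X_2$: a suitable Schubert cycle $\alpha$ on the Chernousov-Merkurjev variety $Y_w$ for a well-chosen double coset $w\in W_{P_2}\backslash W/W_{P_2}$ yields a rational endomorphism $\alpha_\star$ with $\alpha_\star(1)=h^4$ up to a nonzero scalar, via the explicit formula~\eqref{ff2}. Theorem~\ref{tchme} guarantees that some power of $\alpha_\star$ is a rational projector, and Lemma~\ref{lx} (applied with this projector and the morphisms extracted from $\alpha_\star$) identifies its image as an indecomposable direct summand of $\M(X_2)$. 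By Proposition~\ref{p1}, this summand is a Tate shift of the upper motive of $X_1$, and has its generic point in codimension~$4$.

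The main obstacle is the second step: explicitly identifying the double coset $w$ and Schubert cycle $\alpha$ realizing $h^4$ as $\alpha_\star(1)$. This is a finite combinatorial search, made tractable by the elimination procedure of \ref{elim} and the multiplication rules computed by the algorithm of Section~\ref{secSteenrod} on the Cayley plane $E_6/P_2$, but it cannot be bypassed by purely abstract means, since the shell-theoretic argument alone only produces rational cycles and not rational projectors.
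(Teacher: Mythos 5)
Your first step---showing $h^4 \in \Ch^4(\BX_2)$ is not in the first shell, hence lies in $\SH_{\{1\}}$---is a correct deduction, but it is not what the paper does and it cannot carry the lemma: exactly the same argument applies when $J_3(G)=(2,1)$ (where $\dim M_{2,1}=20$ too), yet by Table~\ref{e6.dec} the motive $\M(X_2)\simeq M_{2,1}\oplus M_{2,1}(1)$ has \emph{no} summand starting in codimension~$4$ in that case. So shell membership of a rational cycle does not by itself produce a summand; all the weight must fall on the construction of an actual rational projector, and that is where your argument breaks down.

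The second step contains a concrete type error. For a rational cycle $\alpha\in\Ch^{\dim X-l(w)}(\BY_w)$, the map $\alpha_\star\colon\Ch^*(\BX)\to\Ch^*(\BX)$ of Theorem~\ref{tchme} is the realization of a degree-zero correspondence: by the definitions around \eqref{ff1}--\eqref{ff2} it preserves codimension, so $\alpha_\star(1)\in\Ch^0(\BX)$, necessarily a scalar multiple of $1$. The target equation $\alpha_\star(1)=h^4$ therefore cannot hold. What one actually needs---and what the paper verifies---is that $\alpha_\star$ kills $\Ch^i(\BX)$ for $i\le 3$ and is non-nilpotent on $\Ch^4(\BX)$; then a power of $\alpha_\star$ is a rational projector whose image starts in codimension~$4$. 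Additionally, you assume ``a suitable Schubert cycle $\alpha$ on $Y_w$'' is rational, but Schubert cycles live on $\BY_w$ and their rationality over $k$ is exactly the issue; it is not automatic.

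Finally, nowhere in your step~2 do you invoke the hypothesis $J_3(G)=(1,1)$. The paper uses it in an essential way: by \cite[Prop.~4.2]{PS10} it gives rationality of $h_1^3\in\Ch^3(\BX_1)$, which lets one take $\alpha=h_1^6\cdot c_9\in\Ch^{15}(\overline{\E_6/P_{1,2,6}})$ (the Chernousov--Merkurjev factor with $l(w)=6$, and $c_9$ a Chern class of the tangent bundle, hence always rational). A direct computation with the algorithm of Section~\ref{secSteenrod} then shows $\alpha_\star(h_2^4)=-h_2^4$ while $\alpha_\star$ vanishes on $\Ch^{\le 3}$, yielding the desired projector. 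Without locating a specific $\alpha$ whose rationality follows from $J_3(G)=(1,1)$ and carrying out the computation, the ``finite combinatorial search'' you allude to is not merely tedious but ill-posed as stated.
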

\begin{proof}
Let $X=X'=X_2$.
A direct computation of all parameters of Proposition~\ref{lchme} shows that
\begin{align*}
\CH^*(\E_6/P_2\times\E_6/P_2)\simeq\CH^*(\E_6/P_2)\oplus\CH^{*-1}(\E_6/P_{2,4})\oplus\CH^{*-6}(\E_6/P_{1,2,6})\\
\oplus\CH^{*-11}(\E_6/P_{2,4})\oplus\CH^{*-21}(\E_6/P_2),
\end{align*}
where $\E_6$ stands for the split group of type $\E_6$.

Let $h_i$ denote the generator of $\Ch^1(\BX_i)$. Since $J_3(G)=(1,1)$, by \cite[Proposition~4.2]{PS10} $h_1^3$
is rational. Consider the rational cycle
$\alpha=h_1^6\cdot c_9\in\Ch^{15}(\E_6/P_{1,2,6})$,
where $c_9$ stands for the $9$-th Chern class of the tangent bundle
to $\BX_{1,2,6}$. Another direct computation
using Section~\ref{secSteenrod}
and formulas~\eqref{ff1} and \eqref{ff2} shows that the realization
$\alpha_\star\colon\Ch^*(\BX)\to\Ch^*(\BX)$
maps $\Ch^i(\BX)$ to zero for $i\le 3$, and maps $h_2^4$ to $-h_2^4$ (mod $3$). In particular, by Theorem~\ref{tchme}
$\alpha$ defines a projector with generic point of codimension $4$.
\end{proof}

\begin{lem} \label{PM11}
$P(M_{1,1},t)={\scriptstyle t^{20}+t^{18}+t^{17}+t^{16}+t^{14}+t^{13}+
t^{12}+t^{11}+2t^{10}+t^9+t^8+t^7+t^6+t^4+t^3+t^2+1}$.
\end{lem}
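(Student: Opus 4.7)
By Lemma~\ref{l4},
\[
\M(X_2)\simeq M_{1,1}\oplus M_{1,1}(1)\oplus\bigoplus_{i\in I_1}R_{1,1}(i)
\]
for some multiset $I_1$, and Corollary~\ref{c1}\eqref{f1} translates this into the polynomial identity
\[
P(X_2,t) = (1+t)\,P(M_{1,1},t) + P(R_{1,1},t)\,T(t),\qquad T(t):=\sum_{i\in I_1}t^i,
\]
with $P(R_{1,1},t)=(1+t+t^2)(1+t^4+t^8)$. The plan is to show $T(t)=t^4+t^5+t^6+t^7$ and then to extract $P(M_{1,1},t)$ by polynomial division by $1+t$.

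\emph{Producing the four summands.} Lemma~\ref{cm} yields a direct summand of $\M(X_2)$ with generic point in codimension $4$. Since $M_{1,1}$ and $M_{1,1}(1)$ start in codimensions $0$ and $1$ respectively, this summand must be $R_{1,1}(4)$, so $4\in I_1$. Because $G$ is of type $\E_6$, the Picard group of $X_2$ is rational, hence the hyperplane class $h\in\Ch^1(\BX_2)$ is defined over $k$. Write $b$ for the generic point of $R_{1,1}(4)$; by Proposition~\ref{p1} it lies in the Borel shell $\SH_\Delta$. The cycles $bh, bh^2, bh^3$ are rational and nonzero in codimensions $5, 6, 7$, and a shell-membership check---ruling out duals over intermediate function fields $k(X_\Theta)$ with $\Theta\subsetneq\Delta$ via the classification of Tits indices of inner $\E_6$---places them in $\SH_\Delta$. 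Applying Theorem~\ref{t1} with $\alpha=h,h^2,h^3$ then produces summands $R_{1,1}(5), R_{1,1}(6), R_{1,1}(7)$, giving $\{4,5,6,7\}\subseteq I_1$.

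\emph{Ruling out further summands.} Evaluation of the Poincar\'e polynomial identity at $t=1$ gives $72 = 2\,P(M_{1,1},1)+9\,T(1)$, so $P(M_{1,1},1)\equiv 0\pmod 9$. Combining with $T(1)\ge 4$ and the lower bound $P(M_{1,1},1)\ge 3$ from Corollary~\ref{c1}\eqref{f2}---here we use that the six Tate summands of $\M(X_2)_{k(X_2)}$ identified in Lemma~\ref{l3} distribute (uniquely) as $\{0,10,20\}$ in $M_{1,1}$ and $\{1,11,21\}$ in $M_{1,1}(1)$---leaves only $P(M_{1,1},1)\in\{9,18\}$. The palindromy $T(t)=t^{11}T(1/t)$, inherited from the palindromy of $P(X_2,t), P(R_{1,1},t), P(M_{1,1},t)$, together with $T_i\ge 1$ for $i\in\{4,5,6,7\}$, reduces the case $P(M_{1,1},1)=9$ to the two distributions $(T_4,T_5,T_6,T_7)=(2,1,1,2)$ or $(1,2,2,1)$; in both, a direct check shows that some middle-degree coefficient of $(P(X_2,t)-P(R_{1,1},t)T(t))/(1+t)$ becomes negative. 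Hence $T(t)=t^4+t^5+t^6+t^7$.

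The principal obstacle is verifying the shell condition in the iterated application of Theorem~\ref{t1}: one must ensure that none of the cycles $bh^j$ admits a dual over a function field $k(X_\Theta)$ with $\Theta\subsetneq\Delta$, which would cause it to drop into a strictly smaller shell and produce a summand of a different isomorphism type. Once $T(t)$ is pinned down, polynomial division of $P(X_2,t)-(t^4+t^5+t^6+t^7)\,P(R_{1,1},t)$ by $1+t$ is mechanical and yields the claimed palindromic polynomial of degree $20$.
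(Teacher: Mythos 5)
Your overall strategy tracks the paper's closely in the first half (Lemma~\ref{l4} plus Corollary~\ref{c1}\eqref{f1} to get the identity $P(X_2,t)=(1+t)P(M_{1,1},t)+P(R_{1,1},t)T(t)$, then Lemma~\ref{cm} and iterated Theorem~\ref{t1} to force $\{4,5,6,7\}\subseteq I_1$), and the worry you flag about the shell membership of $bh^j$ is real but is handled in the paper at exactly the same level of terseness, so it is not a point of genuine divergence. Where you genuinely diverge is the argument that $I_1\subseteq\{4,5,6,7\}$: the paper works with the constraint from Corollary~\ref{c1}\eqref{f2} that $P(M_{1,1},t)-(1+t^{10}+t^{20})$ is divisible by $1+t+t^2$ (coming from the decomposition of $M_{1,1}$ over $k(X_2)$ into shifts of $R_{1,0}$), combined with the palindromy of $P(M_{1,1},t)$ and a degree count; you instead evaluate at $t=1$, observe $2\,P(M_{1,1},1)=72-9\,T(1)$ forces $P(M_{1,1},1)\in\{9,18\}$, and then do a finite case check using the palindromy $T(t)=t^{11}T(1/t)$.

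The gap in your version is that the case $P(M_{1,1},1)=9$, i.e.\ $T(1)=6$, does not reduce to only the two distributions $(T_4,\dots,T_7)=(2,1,1,2)$ or $(1,2,2,1)$. With $T_4,\dots,T_7\ge 1$, $T(1)=6$, and the palindromy $T_i=T_{11-i}$, the extra two units must be a palindromic pair $T_j=T_{11-j}=1$ with $j\in\{0,1,2,3,4,5\}$; ruling out $j=0,1$ by the one-dimensionality of $\Ch^0(\BX_2)$ and $\Ch^1(\BX_2)$ still leaves four possibilities, namely $T(t)=t^2+t^4+\cdots+t^7+t^9$, $T(t)=t^3+t^4+\cdots+t^7+t^8$, $T(t)=2t^4+t^5+t^6+2t^7$, and $T(t)=t^4+2t^5+2t^6+t^7$. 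You only dispose of the last two. The first two also happen to produce a negative coefficient in $\bigl(P(X_2,t)-P(R_{1,1},t)T(t)\bigr)/(1+t)$ (at $t^6$ and $t^5$ respectively), so the approach is salvageable by simply checking them, but as written the enumeration is incomplete and the conclusion ``Hence $T(t)=t^4+t^5+t^6+t^7$'' does not follow. Alternatively, one could first prove $2,3\notin I_1$ as the paper does, which makes the extra two cases moot.
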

\begin{proof}
If $2\in I_1$ or $3\in I_1$ (in the notation of Lemma~\ref{l4}), then the same argument as in the proof of
Lemma~\ref{lll} implies that $3,4,5,6,7\in I_1$.
We have:
$$P(X_2,t)=P(M_{1,1},t)(1+t)+P(R_{1,1},t)Q_{1,1}(t)$$
with $Q_{1,1}(t)=t^3+t^4+t^5+t^6+t^7+Q(t)$
and $$P(M_{1,1},t)=1+t^{10}+t^{20}+P(R_{1,0},t)S(t)$$
for some polynomials $Q$ and $S$ with non-negative coefficients. Comparing the terms we come to a contradiction.

Thus, $2$ and $3\not\in I_1$. By Lemma~\ref{cm} $4\in I_1$.
Therefore $5,6,7\in I_1$.

Since $2,3\not\in I_1$, these codimensions belong to the upper motive $M_{1,1}$.
Therefore $P(M_{1,1},t)=1+t^{10}+t^{20}+t^2+t^3+Q_1$ for some $Q_1\in\zz[t]$ with non-negative coefficients.
Since $P(M_{1,1})-(1+t^{10}+t^{20})$ is divisible by $1+t+t^2$, we have $Q_1=t^4+Q_2$ for some $Q_2\in\zz[t]$
with non-negative coefficients.

By symmetry of the projector, $Q_2=t^{18}+t^{17}+t^{16}+Q_3$ for some $Q_3\in\zz[t]$ with non-negative coefficients,
and this together with above polynomial identities implies that $Q_3=Q_4\cdot t^6$ for some $Q_4\in\zz[t]$,
and $\deg Q_3=15<\dim R_{1,1}+6=16$. Therefore $I_1\subset\{4,5,6,7\}$, and, thus, $I_1=\{4,5,6,7\}$.
\end{proof}

In the following statements we assume that $\car k=0$ so that me way apply Proposition \ref{prpr}. However, we will
remove this restriction in Corollary \ref{nochar0}.

\begin{lem}\label{l9}
If $J_3(G)=(1,0)$ and $\car k=0$, then $X_2$ has a zero-cycle of degree
coprime to $3$, and in particular 
$M_{1,0}\simeq\ff_3$.
\end{lem}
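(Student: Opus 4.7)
The plan is to apply Proposition~\ref{prpr} with $p = 3$ and $X = X_2$ to force a numerical contradiction. For the second clause of the lemma, if $X_2$ admits a zero-cycle of degree coprime to $3$, then $\pt \in \Ch_0(\overline{X_2})$ is rational modulo $3$, so $\pt \times 1$ is a rational projector whose image is a Tate summand $\ff_3 \subset \M(X_2)$; since $M_{1,0}$ is the unique indecomposable summand with $\Ch^0 \neq 0$, necessarily $M_{1,0} \simeq \ff_3$.

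For the main assertion I would argue by contradiction, assuming $X_2$ has no zero-cycle of degree coprime to $3$. By Lemma~\ref{l3}, $\dim M_{1,0} = 20$, and its proof identifies the Tate summands of $\M(X_2)_{k(X_2)}$ as those at shifts $I = \{0, 1, 10, 11, 20, 21\}$, together with non-Tate indecomposables $N_j$ of positive dimension. The classes $1$ and $h$ (the hyperplane class, rational since $\Pic(X_2)$ is rational for inner $\E_6$) lie in the first shell $\SH_{\{2\}}$. If a rational $\alpha \in \Ch^t(\overline{X_2}) \cap \SH_{\{2\}}$ existed with $t \geq 2$, Theorem~\ref{t1} applied to the generic point $1$ of $M_{1,0}$ and $\alpha$ would yield a direct summand of $\M(X_2)$ isomorphic to $M_{1,0}(t)$; but the Poincar\'e polynomial of the latter has degree $t + 20 > 21 = \dim X_2$, which is impossible. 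Hence $Q(t) = 1 + t$ and
\[
\frac{\sum_{i \in I} t^i}{Q(t)} = \frac{(1+t)(1 + t^{10} + t^{20})}{1+t} = 1 + t^{10} + t^{20},
\]
matching $\sum_{l=0}^{2} t^{bl}$ with $b = 10$. The conclusion of Proposition~\ref{prpr} then forces $10 = (3^n - 1)/2$, so $3^n = 21$, which has no solution.

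The principal obstacle is verifying condition~(1) of Proposition~\ref{prpr}: each $N_j$ must be defined and indecomposable over $k$. Here I would decompose $\M(X_2) \simeq M_{1,0} \oplus M_{1,0}(1) \oplus \bigoplus_i M_i$ into $k$-indecomposables by Krull--Schmidt and apply Propositions~\ref{lekarp1} and~\ref{lekarp2} to control how each $M_i$ base-changes over $k(X_2)$, using the Tits-index constraints on $G_{k(X_2)}$ implied by $J_3(G) = (1, 0)$. The goal is to show that each $M_i \notin \{M_{1,0}, M_{1,0}(1)\}$ remains indecomposable over $k(X_2)$ (contributing an $N_j$), while $M_{1,0}$ and $M_{1,0}(1)$ together account for the Tate part at $I$.
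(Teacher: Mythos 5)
Your overall strategy matches the paper's: both argue by contradiction, apply Proposition~\ref{prpr} with $b = 10$, and derive $10 \neq (3^n-1)/2$. Your computation that $Q(t) = 1+t$ (either via your Theorem~\ref{t1} dimension bound or equivalently from $\dim M_{1,0} = 20$ given by Lemma~\ref{l3}) is correct, as is your handling of the ``in particular'' clause.

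The genuine gap is exactly the one you flag: condition~(1) of Proposition~\ref{prpr}, and your sketch for closing it does not quite work and is not the route the paper takes. You propose to start from a Krull--Schmidt decomposition of $\M(X_2)$ over $k$ and then argue via Propositions~\ref{lekarp1} and~\ref{lekarp2} that each $k$-indecomposable stays indecomposable over $k(X_2)$. But the $N_j$ in Proposition~\ref{prpr} are defined intrinsically over $k(X_2)$, and a priori they need not each arise as the base change of a single $k$-indecomposable summand (some $M_i$ could split off additional Tate pieces over $k(X_2)$, shuffling the matching); establishing this correspondence is exactly what you would need to prove, not assume. The paper instead works from the other side: the explicit Chernousov--Gille--Merkurjev decomposition of $\M(X_2)$ over $k(X_2)$ identifies every non-Tate summand as a shift of $\M(\SB(A)_{k(X_2)})$, where $A$ is the Tits algebra. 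Since $J_3(G)=(1,0)$ forces $X_2$ to be non-generically-split by \cite[Th.~5.7(3)]{PS10}, one has $\ind(A_{k(X_2)})=3$, so $\M(\SB(A)_{k(X_2)})$ is indecomposable; and it is manifestly the base change of the $k$-motive $\M(\SB(A))$ (with $\ind A = 3$), which is indecomposable over $k$. This is the concrete verification of condition~(1) that your proposal is missing.
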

\begin{proof}
Assume $X_2$ has no zero-cycles of degree coprime to $3$.
Let $A$ be the Tits algebra of $G$ and $D$ the underlying division algebra. Denote by $Y$
the Severi-Brauer variety $\SB(D)$ of $D$.
Since $J_3(G)=(1,0)$, $\ind A=3$ and by \cite[Theorem~5.7(3)]{PS10}
the variety $X_2$ is not generically split, and, hence, $\ind(A_{k(X_2)})=3$.
Therefore the motive of $Y_{k(X_2)}$ is indecomposable \cite[Th.~2.2.1]{Karp:SB}.

Moreover, over $k(X_2)$ the motive $\M(X_2)$ is isomorphic to
\[
\oplus_{i=0,1,10,11,20,21}\ff_3(i)\oplus(\oplus_{j\in J}\M(Y_{k(X_2)})(j))
\]
for some multiset of indexes $J$ by \cite[Theorem~7.5]{CGM}.

Pick a generator $h$ of the Picard group of $X_2$.
The proof of Lemma~\ref{l3} shows that this is a rational cycle from the first shell.
Now all conditions of Proposition~\ref{prpr} are satisfied and the parameter $b$ in that
proposition equals $10$. This is a contradiction, because $10\ne\frac{3^n-1}{2}$ for any $n$.
\end{proof}

\begin{cor}\label{cor10}
If $\car k=0$ and $J_3(G) = (1,0)$, then
\begin{align*}
\M(X_2)&\simeq\oplus_{i=0,1,10,11,20,21}\ff_3(i)\oplus(\oplus_{j\in
J^{1,0}_2}\M(\SB(D))(j)) \quad \text{and} \\
\M(X_4)&\simeq\oplus_{i=0,1,9,10,10,11,19,20,20,21,29,30}\ff_3(i)\oplus(\oplus_{j\in J^{1,0}_4}\M(\SB(D))(j)).
\end{align*}
\end{cor}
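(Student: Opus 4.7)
The plan is to descend the decomposition of $\M(X_2)_{k(X_2)}$ given in the proof of Lemma~\ref{l9} down to $k$ via Karpenko's theorem. That proof establishes
\[
\M(X_2)_{k(X_2)} \simeq \bigoplus_{i\in\{0,1,10,11,20,21\}} \ff_3(i) \oplus \bigoplus_{j\in J} \M(\SB(A))_{k(X_2)}(j)
\]
for some multiset $J$. Since $\ind(A)=3$ and $\deg(A)=27$, the motive $\M(\SB(A))$ splits over any field extension into nine Tate shifts of its upper (Rost) motive $M(\SB(A))$, which has Poincar\'e polynomial $1+t+t^2$. Hence the right-hand side is in fact a direct sum of Tate shifts of $\ff_3$ and of $M(\SB(A))_{k(X_2)}$.

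By Karpenko's theorem (Proposition~\ref{lekarp1}), every indecomposable summand of $\M(X_2)$ over $k$ is a Tate shift of an upper motive $M_Z$ with the Tits index of $G_{k(Z)}$ containing that of $G_{k(X_2)}$. When $J_3(G)=(1,0)$, the Tits algebra $A$ has index $3$ and does not split over $k(X_2)$, so by the Tits-index classification for inner $\E_6$ the anisotropic kernel of $G_{k(X_2)}$ is of type $2\A_2$; running the classification for each candidate $Z$ then shows that the admissible upper motives are only $\ff_3$ (arising from $X_2$, $X_4$, or $X_{2,4}$, whose upper motive is $M_{1,0}=\ff_3$ by Lemmas~\ref{l9} and \ref{l3}) and $M(\SB(A))$ (arising from $X_1=\SB(A)$). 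Therefore $\M(X_2)$ is, over $k$, a direct sum of Tate shifts of $\ff_3$ and of $M(\SB(A))$; by the Krull--Schmidt theorem the multiplicities and positions of these shifts must reproduce the $k(X_2)$-decomposition upon base change, yielding exactly the stated formula with the $M(\SB(A))$-shifts indexed by $J^{1,0}_2$ of Table~\ref{e6.multi}.

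For $\M(X_4)$ I would follow an identical script: Lemma~\ref{l3} identifies the upper motive of $X_4$ with that of $X_2$, namely $\ff_3$; the Chernousov--Gille--Merkurjev decomposition (\cite[Theorem~7.5]{CGM}), applied to $X_4$ in place of $X_2$ as in the proof of Lemma~\ref{l9}, produces Tate summands at the twelve positions listed in the statement---the double entries at $10$ and $20$ arising from the cell structure of $\E_6/P_4$---together with shifts of $\M(\SB(A))_{k(X_4)}$. Karpenko's theorem then descends the decomposition to $k$ with the $M(\SB(A))$-shifts indexed by $J^{1,0}_4$.

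The main obstacle is the Tits-index argument restricting the admissible upper motives to $\ff_3$ and $M(\SB(A))$: one must verify carefully that no other inner-$\E_6$ variety $Z$ has $G_{k(Z)}$ with Tits index dominating that of $G_{k(X_2)}$ while contributing a genuinely different upper motive. Once this is in hand, the remaining steps---the Krull--Schmidt comparison and the explicit CGM computation of the Tate positions for $X_4$---reduce to the Poincar\'e polynomial identities encoded in Table~\ref{e6.multi}.
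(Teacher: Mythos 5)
Your argument for $\M(X_2)$ is essentially the right one: by Lemma~\ref{l9} the upper motive $M_{1,0}$ is $\ff_3$, Karpenko's theorem (Proposition~\ref{lekarp1}) forces every indecomposable summand of $\M(X_2)$ to be a Tate shift of $\ff_3$ or of $M(\SB(A))$, and since both of these remain indecomposable over $k(X_2)$ (because $\ind A_{k(X_2)}=3$), Krull--Schmidt transports the $k(X_2)$-decomposition from Lemma~\ref{l9}'s proof back to $k$. This is in the spirit of how the paper treats the other rows of Table~\ref{e6.dec}.

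There is, however, a genuine gap in the $\M(X_4)$ half. You assert that applying \cite[Theorem~7.5]{CGM} to $X_4$ over $k(X_4)$ ``produces Tate summands at the twelve positions listed in the statement.'' This is not what that decomposition gives: the proof of Lemma~\ref{l4} records that the CGM decomposition of $\M(X_4)_{k(X_4)}$ has \emph{exactly six} Tate summands, at codimensions $0,9,10,19,20,29$; the remaining pieces are motives of anisotropic varieties for the $2\A_2$ anisotropic kernel (twisted projective planes and their products), which do not visibly break off six more Tate factors. The CGM output depends only on the Tits index of $G_{k(X_4)}$, which is $\{2,4\}$ here just as for $J_3(G)\in\{(1,1),(2,1)\}$, so one cannot get a different count of zero-dimensional CGM pieces in the $(1,0)$ case. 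The other six Tate positions $1,11,21,30$ and the extra copies at $10,20$ must therefore be produced by a separate argument --- for instance, by observing that $\Pic(X_4)$ is rational, so the hyperplane class $h\in\Ch^1(\BX_4)$ is a rational cycle in the first shell, and then applying Theorem~\ref{t1} to each of the six Tate summands already produced in order to obtain a paired summand shifted by one. Your proposal as written collapses this two-step construction into a single claim about the CGM decomposition, which is where it fails. Once the extra six Tate summands are produced, the Karpenko plus Krull--Schmidt argument descends the $M(\SB(A))$-summands exactly as you describe for $X_2$, and the multiset $J^{1,0}_4$ falls out of the Poincar\'e polynomial bookkeeping.
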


\begin{cor}\label{cor11}
Assume that $\car k=0$. Then $J_3(G)\ne(2,0)$.
\end{cor}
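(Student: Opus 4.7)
The plan is to argue by contradiction. Assume $J_3(G) = (2, 0)$ and apply Proposition~\ref{prpr} to $X = X_2$ at $p = 3$, following the template of the proof of Lemma~\ref{l9}. Since $j_1 = 2 \ne 0$, Lemma~\ref{l2} gives that $X_2$ has no zero-cycle of degree coprime to~$3$, verifying condition~(2). By \cite[Theorem~5.7(3)]{PS10} the variety $X_2$ is not generically split, so $A_{k(X_2)}$ is non-split. The Picard generator $h$ of $X_2$ is a rational cycle of the first shell (as in the proof of Lemma~\ref{l3}), and the rational first-shell part of $\Ch^*(\overline X_2)$ is spanned by $1$ and $h$, so $Q(t) = 1 + t$. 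Over $k(X_2)$ the Tits index of $G$ contains vertex~$2$, and by the Tits classification of inner $\E_6$-indexes the only non-trivial such index is $\{2, 4\}$, with semisimple anisotropic kernel of type $\A_2 \times \A_2$. The Chernousov--Gille--Merkurjev decomposition \cite[Theorem~7.5]{CGM} then produces exactly six Tate summands of $\M(X_2)_{k(X_2)}$ at codimensions $\{0, 1, 10, 11, 20, 21\}$ --- the same six indices as in the $J_3(G) = (1, 0)$ case --- plus non-Tate summands.

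The main obstacle is verifying condition~(1) of Proposition~\ref{prpr}: each non-Tate indecomposable summand of $\M(X_2)_{k(X_2)}$ must be defined over $k$ and remain indecomposable over $k$. These summands are shifts of $\M(\SB(D))$, where $D$ is the division algebra underlying $A_{k(X_2)}$. The key technical step is to show $\ind(A_{k(X_2)}) = 9 = \ind A$, so that $D$ is the base change of the degree-$9$ division algebra over $k$ Brauer-equivalent to $A$, and $\M(\SB(D))$ is indecomposable both over $k$ and over $k(X_2)$. A drop of index from $9$ to $3$ over $k(X_2)$ would correspond to an additional distinguished vertex appearing in the Tits index of $G_{k(X_2)}$, incompatible with the classification~$\{2, 4\}$; so the index stays at $9$.

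Granting the three conditions, the ratio
\[
\frac{\sum_{i \in I} t^i}{Q(t)} = \frac{1 + t + t^{10} + t^{11} + t^{20} + t^{21}}{1 + t} = 1 + t^{10} + t^{20} = \sum_{l=0}^{2} t^{10 l}
\]
identifies $b = 10$. Proposition~\ref{prpr} then forces $10 = (3^n - 1)/2$ for some integer $n \ge 1$, i.e.\ $3^n = 21$, which is impossible. This contradiction proves $J_3(G) \ne (2, 0)$.
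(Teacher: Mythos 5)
Your plan breaks down at the key technical step, and the resulting approach cannot be repaired along the lines you propose; the paper's actual argument is different.

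You claim $\ind(A_{k(X_2)}) = 9$, reasoning that ``a drop of index from $9$ to $3$ over $k(X_2)$ would correspond to an additional distinguished vertex appearing in the Tits index of $G_{k(X_2)}$, incompatible with the classification $\{2,4\}$.'' This is exactly backwards. The Tits index $\{2,4\}$ has semisimple anisotropic kernel of type $\A_2\times\A_2$, whose Tits algebras are degree-$3$ algebras Brauer-equivalent to $A_{k(X_2)}$. Hence the Tits index $\{2,4\}$ \emph{forces} $\ind(A_{k(X_2)}) = 3$; a rank-$2$ anisotropic kernel of type $\A_2\times\A_2$ cannot carry a Tits algebra of index $9$. (This is also what the index reduction formula gives, as the paper notes.) With the correct value $\ind(A_{k(X_2)}) = 3$, the non-Tate summands $N_j$ of $\M(X_2)_{k(X_2)}$ are shifts of $\M(\SB(D))$ for a \emph{degree-$3$} division algebra $D$ over $k(X_2)$, a motive of dimension $2$. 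Now condition~(1) of Proposition~\ref{prpr} --- that each $N_j$ descends to $k$ and is indecomposable there --- is not established and is in fact problematic: over $k$ one has $\ind A = 9$, and the candidate upper motives of $G$-homogeneous varieties (e.g., $R_{2,0}$, the common upper motive of $X_\Delta$ and $\SB(A)$, of Poincar\'e polynomial $\frac{t^9-1}{t-1}$) all have dimension $8$, not $2$, so there is no obvious $k$-descent of $\M(\SB(D))$ to hand to Proposition~\ref{prpr}. You have not supplied one, and without it the hypotheses of Proposition~\ref{prpr} are unverified, so the final numerical contradiction ($b=10 \ne (3^n-1)/2$) never gets off the ground.

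The paper avoids this obstacle entirely. Starting from $J_3(G) = (2,0)$, it first identifies $\ind A = 9$ by matching the Poincar\'e polynomial $\frac{t^{3^{j_1}}-1}{t-1}$ of the common upper motive of $X_\Delta$ and $\SB(A)$. It then passes to $K = k(\SB_3(A))$, where index reduction gives $\ind A_K = 3$ and hence $J_3(G_K) = (1,0)$, so Lemma~\ref{l9} applies over $K$ and shows $(X_2)_K$ has a zero-cycle of degree coprime to $3$. Separately, $\ind A_{k(X_2)} = 3$ shows $\SB_3(A)_{k(X_2)}$ has a rational point, so the upper motives of $X_2$ and $\SB_3(A)$ coincide. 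The contradiction comes from dimensions: Lemma~\ref{l3} gives the upper motive of $X_2$ dimension $20$, while $\dim\SB_3(A) = 18 < 20$. So Proposition~\ref{prpr} is applied only where its hypotheses are cleanly available (the $(1,0)$ situation), and the actual contradiction is a dimension count, not an arithmetic constraint on $b$.
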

\begin{proof}
Let $A$ be the Tits algebra of $G$ and $D$ the underlying division algebra. The index of $A$ equals $3^i$ for some
$i=0,\ldots,3$.

Assume $J_3(G)=(2,0)$. Then the Borel variety and $\SB(A)$ have a common upper motive.
In particular, the Poincar\'e polynomial of this motive equals $\frac{t^{3^{j_1}}-1}{t-1}$.
Hence, $\ind A=3^{j_1}=9$.

Let $K=k(\SB_3(D))$. Then by the index reduction formula $\ind D_K=3$ (see \cite{SchVB92}).
Therefore $J_3(G_K)=(1,0)$. (The second entry is zero because each entry in the $J$-invariant is non-increasing under field extensions.)

Since $J_3(G)=(2,0)$, the variety $X_2$ has no zero-cycles
of degree coprime to $3$ (see Lemma~\ref{l2}). Therefore by Lemma~\ref{l9} $(X_2)_K$ has a zero-cycle of degree
$1$ mod $3$.

On the other hand, since
by the index reduction formula
$\ind D_{k(X_2)}=3$ (see \cite{MPW96}), the variety $\SB_3(D)_{k(X_2)}$ has a rational point.
Thus, by Lemma~\ref{lx} the motives $\U(X_2)$ and $\U(\SB_3(D))$ are isomorphic.

By Lemma~\ref{l3} $\dim \U(X_2)=20$. On the other hand,
\[
\dim \U(\SB_3(D))\le\dim\SB_3(D)=\dim\mathrm{Gr}(3,9)=18<20,
\]
which is a contradiction.
\end{proof}

\section{Reduction to characteristic zero}\label{sec9}

We now prove a general mechanism for transferring results from characteristic $0$
to a field of prime characteristic.

Fix  a prime number $\ell$ and $m \ge 1$.  Construct a complete discrete valuation ring $R$ with residue field $k$ of characteristic $p$ (possibly equal to 0 or $\ell$) and fraction field $K$ of characteristic zero.  In case $\ell = p$, we enlarge $R$ if necessary to include the $\ell^m$-th roots of unity.  We have a split exact sequence:
\begin{equation} \label{residue.seq}
0\xrightarrow{}H^{d+1}(k,\Z/\ell^m\Z(d))\xrightarrow{i^K_k}H^{d+1}_\nr(K,\mu_{\ell^m}^{\otimes d})
\xrightarrow{\partial_K}H^d(k, \Z/\ell^m \Z(d-1)) \xrightarrow{} 0
\end{equation}
where $H^{d+1}_\nr$ denotes the subgroup of elements $x$ such that $nx$ is killed by the maximal unramified
extension of $K$ for some $n$ not divisible by $p$, see \cite[p.~18]{GMS} if $p \ne \ell$ and
\cite[Th.~3 and p.~235]{Kato:cdv} if $p = \ell$. The explicit formulas for $i^K_k$ shows that it
sends symbols in $H^{d+1}(k, \Z/\ell^m \Z(d))$ to symbols in $\ker \partial_K$.  When $m = 1$, \cite[16.1]{GPe:wild} gives the converse that symbols in $\ker \partial_K$ are images of symbols in $H^{d+1}(k, \Z/\ell \Z(d))$. 

\begin{lem}
In the above notation an element $\xi\in H^{d+1}(k, \Z/\ell\Z(d))$ is a symbol over some finite extension of $k$
of degree not divisible by $\ell$ if and only if there is a finite extension of
$K$ not divisible by $\ell$ over which $i^K_k(\xi)$ is a symbol.
\end{lem}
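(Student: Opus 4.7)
The plan is to verify both directions of the ``if and only if'' by combining the functoriality of the inflation map with the symbol identification recorded just before the lemma (applied below to $L$ and $k_L$ in place of $K$ and $k$). Throughout I use that since $R$ is complete, every finite extension $L/K$ has a unique extension of the valuation with residue field $k_L$ and ramification/residue degrees $e, f$ satisfying $ef = [L:K]$; in particular, if $[L:K]$ is prime to $\ell$ then so are $e$ and $f = [k_L:k]$.

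For the forward implication, given $k'/k$ of degree prime to $\ell$ with $\xi_{k'}$ a symbol, I take $K'/K$ to be the unramified extension corresponding to $k'/k$, so $[K':K] = [k':k]$ is prime to $\ell$. Naturality of inflation gives $(i^K_k(\xi))_{K'} = i^{K'}_{k'}(\xi_{k'})$, and applying the symbol identification to the pair $(K', k')$ shows the right-hand side is a symbol.

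For the reverse direction, given $L/K$ of degree prime to $\ell$ with $(i^K_k(\xi))_L$ a symbol, I would show the key identity $(i^K_k(\xi))_L = i^L_{k_L}(\xi_{k_L})$. Granting this, the element lies in $H^{d+1}_\nr(L, \mu_\ell^{\otimes d})$ and is a symbol there, so the symbol identification for $(L, k_L)$ yields that $\xi_{k_L}$ is a symbol; since $[k_L:k] = f$ is prime to $\ell$, this is the desired extension.

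The main delicate point is verifying the identity $(i^K_k(\xi))_L = i^L_{k_L}(\xi_{k_L})$ when $L/K$ is ramified, since in that case the inertia subgroups of $L$ and $K$ are not transparently related. I would check it at the level of absolute Galois groups: the composition $G_L \hookrightarrow G_K \twoheadrightarrow G_k$ has image $G_{k_L}$ with kernel the inertia subgroup of $G_L$, so it factors as $G_L \twoheadrightarrow G_{k_L} \hookrightarrow G_k$. Inflating $\xi$ along this composition therefore equals first restricting $\xi$ to $G_{k_L}$ and then inflating to $G_L$, which is exactly $i^L_{k_L}(\xi_{k_L})$.
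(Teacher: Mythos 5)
The gap is in your forward (``only if'') direction: you pass from $k'/k$ of degree prime to $\ell$ to ``the unramified extension'' $K'/K$ with residue field $k'$, but such a lift exists only when $k'/k$ is separable. The paper allows $\car k = p > 0$ with $k$ possibly imperfect, so a degree-prime-to-$\ell$ extension $k'/k$ can have a nontrivial purely inseparable part (necessarily with $\ell \ne p$, since purely inseparable extensions have $p$-power degree), and then your construction simply does not apply. The paper handles exactly this remainder: after disposing of the separable part by the unramified lift (which is in effect your argument), it reduces to the case $k'/k$ purely inseparable, observes that then either $k'=k$ or $\ell\ne p$, and that in the latter case a purely inseparable extension induces an isomorphism on mod-$\ell$ Galois cohomology; hence $\xi$ is already a symbol over $k$ itself and the symbol identification for $(K,k)$ applies with no extension at all.

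Your ``reverse'' direction is exactly what the paper dismisses as the obvious ``if'' direction, and your verification of the compatibility $(i^K_k(\xi))_L = i^L_{k_L}(\xi_{k_L})$ via the factorization $G_L \twoheadrightarrow G_{k_L} \hookrightarrow G_k$ is a reasonable way to see it. One caveat: for $\ell = p$ the groups $H^{d+1}(k,\Z/\ell\Z(d))$ are Kato's groups rather than Galois cohomology of Tate twists, so $i^K_k$ is not literally inflation in that case and the compatibility has to be read off from the cited reference rather than from a Galois-group diagram chase.
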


\begin{proof}
The ``if'' direction is clear, using that symbols in the image of $i^K_k$ are images of symbols.  For ``only if'', one immediately reduces to the case where the given extension $E$ of $k$ is purely inseparable.
But, since $[E:k]$ is not divisible
by $\ell$, we have $\ell\ne p$, and the mod-$\ell$ Galois cohomology groups over $k$ and $E$ are the same, so in that case $\xi$ is already a symbol over $k$.
\end{proof}

\begin{lem}  \label{Rost99}
If $\xi \in H^{d+1}(k, \Z/2\Z(d))$ is such that $\res_{L/k}(\xi)$ is a symbol for some odd-degree extension $L$ of $k$, then $\xi$ is a symbol.
\end{lem}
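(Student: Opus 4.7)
The plan is to reduce the statement, via the norm residue isomorphism, to a descent theorem for Pfister forms through odd-degree extensions.

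By Voevodsky's proof of the mod-$2$ norm residue isomorphism theorem (the Bloch--Kato conjecture at the prime $2$), the natural map $K^M_{d+1}(k)/2 \to H^{d+1}(k,\Z/2\Z(d))$ is an isomorphism and identifies the subsets of symbols on the two sides. Combined with the Milnor conjecture on quadratic forms (settled by Orlov, Vishik, and Voevodsky), this yields a canonical isomorphism $e_{d+1}\colon I^{d+1}(k)/I^{d+2}(k)\xrightarrow{\sim} H^{d+1}(k,\Z/2\Z(d))$ which sends the class of $\lla a_0,\ldots,a_d\rra$ to the symbol $(a_0)\cup\cdots\cup(a_d)$. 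Hence $\xi$ being a symbol is equivalent to the image of $\xi$ in $I^{d+1}(k)/I^{d+2}(k)$ being represented by a $(d+1)$-fold Pfister form.

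Lift $\xi$ to an anisotropic form $\phi\in I^{d+1}(k)$ of smallest possible dimension representing $\xi$ in $I^{d+1}/I^{d+2}$. The hypothesis gives a $(d+1)$-fold Pfister form $\pi$ over $L$ with $\phi_L\perp(-\pi)\in I^{d+2}(L)$. By the Arason--Pfister Hauptsatz, this residual form is either Witt-trivial or has anisotropic dimension at least $2^{d+2}$. In the Witt-trivial case, $\phi_L$ is Witt-equivalent to $\pi$ and hence isomorphic to $\pi$ (both are anisotropic of dimension $2^{d+1}$). Then Rost's descent theorem for Pfister forms---an anisotropic form over $k$ that becomes isomorphic to a Pfister form after an odd-degree extension is itself similar to a Pfister form---shows that $\phi$ is similar to a Pfister form over $k$, so $\xi$ is a symbol.

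The main obstacle is excluding the alternative case where the residual $\phi_L\perp(-\pi)$ is anisotropic of dimension $\geq 2^{d+2}$. A naive dimension count does not rule this out, since the anisotropic dimension of $\phi_L$ over $L$ can in principle be strictly smaller than $\dim\phi$. One plausible route is to use a Scharlau transfer from $L$ to $k$ applied to the residual to produce a strictly smaller representative of $\xi$ in $I^{d+1}(k)$, contradicting the minimality of $\phi$. Alternatively, one may bypass the quadratic-form translation entirely and cite directly Rost's symbol-descent result in mod-$2$ Milnor $K$-theory, which is exactly the statement of the lemma on the $K^M_{d+1}/2$-side of the norm residue isomorphism.
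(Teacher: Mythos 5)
The paper's proof is a citation: for $\car k\ne 2$ the lemma \emph{is} Rost's result \cite{Ro99}, and for $\car k=2$ one combines that with the reduction-to-characteristic-zero lemma proved immediately before. You instead try to re-derive the symbol-descent corollary from the Pfister-form descent statement, and the gap you flag is not a loose end but the whole argument. If $\xi$ were already a symbol there would be nothing to prove; assuming it is not, any anisotropic $\phi\in I^{d+1}(k)$ with $e_{d+1}(\phi)=\xi$ fails to be similar to a Pfister form, so by the Arason--Pfister corollary $\dim\phi>2^{d+1}$. Springer's theorem keeps $\phi_L$ anisotropic of the same dimension, so $\phi_L$ cannot be Witt-equivalent to the $2^{d+1}$-dimensional form $\pi$: your ``Witt-trivial'' branch is vacuous exactly in the situation where you need it. Only the branch you admit you cannot handle occurs, and the dimension constraints there are perfectly consistent, $2^{d+2}\le\dim\bigl(\phi_L\perp(-\pi)\bigr)_{\mathrm{an}}\le\dim\phi+2^{d+1}$, so Arason--Pfister yields no contradiction. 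The Scharlau-transfer suggestion is the right spirit, but a transfer does not carry $I^{d+2}(L)$ into $I^{d+2}(k)$, and carrying this out is the substance of Rost's theorem rather than a footnote to it. Separately, your argument never touches $\car k=2$, where $H^{d+1}(k,\Z/2\Z(d))$ is $H^1(k,K_d(\ksep)/2)$ and the Witt-ring translation via Orlov--Vishik--Voevodsky is unavailable; the paper covers this by the characteristic-zero reduction. Your closing fallback --- cite Rost's symbol descent directly --- is exactly the paper's proof; the rest of the proposal does not supply a working alternative.
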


\begin{proof}
If $\car k \ne 2$, the claim concerns the Galois cohomology group $H^{d+1}(k, \Z/2\Z)$, and the lemma is a result of Rost \cite{Ro99}.  Otherwise, $\car k = 2$ and we take $R$ and $K$ as above with $\ell = 2$ and $m = 1$.  Combining Rost's result and the previous lemma completes the proof.
\end{proof}

Here is the promised reduction:
\begin{prop}\label{gille}
Let $G$ be a simple simply connected linear algebraic group over $k$ and let $\ell^m$ be the largest power of the prime $\ell$ dividing the order of the Rost invariant $r_G$.  Define $R$ and $K$ as above. Then:
\begin{enumerate}
\item There is a simple simply connected linear algebraic group $H$ over $K$ that has the same Dynkin type and the same Tits index as $G$.
\item For every $\xi \in H^1(k, G)$, there is a $\zeta \in H^1(K, H)$ so that:
\begin{enumerate}
\item[(a)] The mod-$\ell$ component of $r_G(\xi)$ is zero in $H^3(k, \Z/\ell^m\Z(2))$ (resp., a symbol in $H^3(k, \Z/\ell\Z(2))$ if and only if the mod-$\ell$ component of $r_H(\zeta)$ is zero in $H^3(K, \Z/\ell^m\Z(2))$ (resp., a symbol in $H^3(K, \Z/\ell\Z(2))$).  If the mod-$\ell$ component of $r_G(\xi)$ is a sum of $\le r$ symbols in $H^3(k, \Z/\ell^m \Z(2))$ with a common slot, then the mod-$\ell$ component of $r_H(\zeta)$ is a sum of $\le r$ symbols in $H^3(K, \Z/\ell^m\Z(2))$ with a common slot.
\item[(b)] For every finite extension $L/K$, the Tits indexes of the twisted forms $(_\zeta H)_L$ and $(_\xi G)_{\Lb}$ are equal, where $\Lb$ is the residue field of $L$.
\item[(c)] For $X_\Theta^\zeta$ a twisted flag variety for $_\zeta H$ and $X_\Theta^\xi$ the corresponding variety for $_\xi G$, 
\[
\deg \CH_0 X_\Theta^\zeta = \deg \CH_0 X_\Theta^\xi
\]
as subgroups of $\Z$.
\end{enumerate}
\end{enumerate}
\end{prop}
\begin{proof}
We can find a semisimple group scheme $\cG$ over $R$ of the same Dynkin type as $G$ whose special fiber is $G$
and whose generic fiber $\cG_K$ is also of the same Dynkin type as $G$. Denote it by $H$.  
One can lift $\xi$ to a class in $H^1_\et(R, \cG)$
which we also denote by $\xi$.
Let $\zeta$ be the image of $\xi$ in $H^1(K, \cG_K)$.
By \cite[Theorem~2]{Gille:inv} one has a commutative diagram
\[
\xymatrix@R=1pc{
H^1(K,H) \ar[r]^{r_H} & H^3(K,\qq/\zz(2)) & \ar[l] H^3(K, \Z/\ell^m \Z(2)) \\
H^1_\et(R,\cG)\ar[u]\ar[d] & & \\
H^1(k,G)\ar[r]^{r_G} & H^3(k,\qq/\zz(2))\ar[uu]_{i^K_k\circ h_*} & \ar[l] H^3(k, \Z/\ell^m \Z(2)) \ar[uu]_{\pm i^K_k} 
}
\]
where $h_*$ is an automorphism that restricts to $\pm 1$ on $H^3(k, \Z/\ell^m\Z)$, hence the first sentence of  (a).  For the second sentence, the explicit formulas for $i^K_k$ show that it sends a sum of $\le r$ symbols with a common slot to a sum of $\le r$ symbols with a common slot.

The Tits indexes of $(_\zeta H)_L$ and $(_\xi G)_{\Lb}$ are the same by \cite[Expos\'e 26, 7.15]{SGA3.3}, hence (b).  It follows that $\deg \CH_0 X_\Theta^\zeta \subseteq \deg \CH_0 X_\Theta^\xi$.  For equality, in view of (b) it suffices to check that for every finite extension $k'$ of $k$, there exists an extension $K'$ of $K$ with residue field $k'$ such that $[K':K] = [k':k]$, which is a routine exercise because the valuation is Henselian.
\end{proof}

Claim (2b) in the proposition is well known in some special cases, for example when $G$ is the special orthogonal group of a quadratic form as in \cite[Prop.~VI.1.9(1)]{Lam} or $G$ is $\PGL_n$  as in \cite[Th.~2.8(b)]{JacobWadsworth} (which does not require the valuation to be discrete).

We illustrate the proposition by applying it in its typical manner.  We number the simple roots of $\E_7$ as in \eqref{E78.roots}.
\begin{cor} \label{E7.7}
Let $G$ be a simple algebraic group of type $\E_7$ over a field $k$.  If the twisted flag variety $X_7$ has a zero-cycle of odd degree, then $X_7$ has a $k$-point.
\end{cor}

\begin{proof}
The claim holds when $\car k = 0$ by \cite[Cor.~3.5]{GiSem}, so assume $\car k$ is prime.  For $K$ as defined earlier in this section, Prop.~\ref{gille}(2c) gives that $X_7(K)$ has a zero-cycle of odd degree, hence $X_7(K) \ne \emptyset$ by ibid., hence $X_7(k) \ne \emptyset$ by Prop.~\ref{gille}(2b).
\end{proof}

\section{Applications to the Rost invariant}\label{s88}

\subsection{Type $\E_6$}

We now return to the setting of \S\ref{s8}.

\begin{lem}\label{iso.lem}
Let $G$ be a group of inner type $\E_6$ over a field $k$. 
If $J_3(G)=(0,0)$, then $G$ is isotropic.
\end{lem}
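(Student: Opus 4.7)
The plan is to combine the motivic decomposition from Table~\ref{e6.dec} with the classification of Tits indexes for inner type~$\E_6$, and then to handle a descent from a prime-to-$3$ extension.

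First, since $j_1 = 0$, the Tits algebra $A$ of $G$ is split by \cite[Proposition~4.2]{PS10}. Reading off row~$(0,0)$ of Table~\ref{e6.dec} applied to $\Theta = \{2\}$, the motive $\M(X_2) \otimes \ff_3$ decomposes as a direct sum of Tate motives, so in particular the summand $\ff_3 = \ff_3(0)$ is rational and yields a zero-cycle on $X_2$ of degree coprime to~$3$. Hence there exists a finite extension $L/k$ of degree coprime to~$3$ with $X_2(L) \neq \emptyset$, so that $G_L$ has a rational parabolic of type~$2$; by the classification of Tits indexes of inner type~$\E_6$ its Tits index is either the full (split) one or $\{2,4\}$, and the latter requires $\ind A = 3$, which is excluded since $A_L$ is split. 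Thus $G_L$ is split.

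The main obstacle will be the descent from $L$ back to~$k$: deducing that $G$ itself is isotropic over~$k$. For this I would first reduce to characteristic~$0$ via Proposition~\ref{gille}, then use that $J_3$ is preserved under extensions of degree coprime to~$3$ (\cite[Proposition~5.18(2)]{PSZ}, as already exploited in the proof of Lemma~\ref{l2}), together with the fact that the hypothesis $J_3(G) = (0,0)$ encodes the vanishing of the $3$-primary component of the Rost invariant of a simply connected cover of~$G$. For a strongly inner simply connected group of type~$\E_6$ with trivial $3$-primary Rost invariant, a proper parabolic subgroup must exist over~$k$; this last implication is the nontrivial input and the most delicate step, relying on the classification of $\E_6$ groups via cohomological invariants of degree~$3$.
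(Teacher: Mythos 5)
Your first paragraph is correct but detours: the paper's proof simply cites \cite[Corollary~6.7]{PSZ} to pass directly from $J_3(G)=(0,0)$ to ``$G$ splits over an extension of degree coprime to $3$''; this also forces the Tits algebra of $G$ to be split (since its index is a power of $3$ dividing a degree coprime to $3$). Your reading of the $(0,0)$ row of Table~\ref{e6.dec} and the invocation of the Tits classification of inner type $\E_6$ are valid, but they reproduce a consequence of the same [PSZ] input that Table~\ref{e6.dec} was built on, so no new ground is gained.

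The genuine gap is at the final step. You correctly identify the crux -- that a simply connected strongly inner $\E_6$ with trivial mod-$3$ Rost invariant is isotropic -- but you do not actually prove it or point to the result that supplies it, writing only that it ``relies on the classification of $\E_6$ groups via cohomological invariants of degree~$3$.'' This is precisely the content of Rost's theorem \cite{Rost:CR}: over a field of characteristic $\ne 2,3$, the mod-$3$ invariant $g_3$ of an exceptional Jordan algebra vanishes if and only if the algebra is reduced, which (for $G$ with split Tits algebra) translates into $X_1$ having a rational point. The paper's proof invokes exactly this and then uses Proposition~\ref{gille} to transfer the conclusion to arbitrary characteristic. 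In a blind reconstruction this citation is the heart of the matter; leaving it as an unnamed ``delicate step'' means the proof is not complete. Note also that, as written, your reduction is slightly out of order: one applies Rost's theorem in characteristic zero (or $\ne 2,3$) and then transports the statement ``$X_1$ has a rational point'' to prime characteristic via Proposition~\ref{gille}\eqref{gille.Tits}--\eqref{gille.zero}, rather than first descending from $L$ to $k$ and then reducing characteristic.
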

\begin{proof}
By \cite[Corollary~6.7]{PSZ} since $J_3(G)=(0,0)$, $G$ splits over a field extension of $k$ of degree coprime to $3$.  Therefore the Tits algebra of $G$ (of degree 27) is split, so we may speak of the Rost invariant of $G$.  Clearly, its 3-component must be zero.

If $\car k\ne 2,3$, then by \cite{Rost:CR} the variety $X_1$ has a rational point. Proposition~\ref{gille}
implies that the same holds over any field of prime characteristic. In particular, $G$ is isotropic.
\end{proof}

\begin{lem}\label{iso.prop}
Let $G$ be a group of inner type $\E_6$ and $A$ a Tits algebra of $G$.
Assume that $\ind A\le 3$. Then
$G \times k(\SB(A))$ is isotropic if and only if $X_2$ has a zero-cycle of degree not divisible by $3$.
\end{lem}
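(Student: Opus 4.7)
The plan is to split the biconditional and, within the reverse direction, handle separately the cases $\ind A = 1$ and $\ind A = 3$.

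For the direction $\Leftarrow$: assume $X_2$ has a zero-cycle of degree coprime to $3$. By Lemma~\ref{l2}, $J_3(G) \in \{(0,0),(1,0)\}$. Over $k(\SB(A))$ the Tits algebra $A$ becomes split, so the first component of the $J$-invariant drops to $0$; the second component can only decrease, and is already $0$. Hence $J_3(G_{k(\SB(A))}) = (0,0)$, and Lemma~\ref{iso.lem} gives the isotropy of $G_{k(\SB(A))}$.

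For the direction $\Rightarrow$: the classification of Tits indices for inner $\E_6$ shows that the only isotropic ones are the split index and the index with $\{2,4\}$ circled (cf.\ the proof of Lemma~\ref{l2}), so isotropy of $G_{k(\SB(A))}$ is equivalent to $(X_2)_{k(\SB(A))}$ having a rational point. In the subcase $\ind A = 1$, we have $\SB(A) \cong \pp^{26}$ and $k(\SB(A))$ is purely transcendental over $k$; specializing the rational map from $\SB(A)$ to $X_2$ induced by this $k(\SB(A))$-point at any $k$-point of $\SB(A)$ yields a $k$-rational point of $X_2$, hence a zero-cycle of degree $1$.

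The harder subcase is $\ind A = 3$. Suppose for contradiction that $X_2$ has no zero-cycle of coprime-to-$3$ degree. Combining Lemma~\ref{l2} with its converse---Lemma~\ref{iso.lem} when $J_3 = (0,0)$ and Lemma~\ref{l9} when $J_3 = (1,0)$, the latter after a reduction to $\car k = 0$ via Proposition~\ref{gille}---forces $J_3(G) = (1,1)$. On the other hand, Lemma~\ref{l2} applied over $k(\SB(A))$ to the rational point of $(X_2)_{k(\SB(A))}$ gives $J_3(G_{k(\SB(A))}) \in \{(0,0),(1,0)\}$; since $A_{k(\SB(A))}$ is split this reduces to $(0,0)$, so the invariant $j_2$ drops from $1$ to $0$ across the Severi--Brauer extension. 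The main obstacle is to rule out this drop. The approach is to exploit the motivic decomposition of $\M(X_4)$ for $J_3(G) = (1,1)$ recorded in Table~\ref{e6.dec}: the upper motive $M_{1,1}$ (whose Poincar\'e polynomial appears in Lemma~\ref{PM11}) and each shifted copy of $R_{1,1}$ would have to split into Tate summands over $k(\SB(A))$. I plan to derive a contradiction via a weak-special-correspondence argument in the spirit of Proposition~\ref{prpr}, producing a numerical obstruction of the form $b \neq (3^n-1)/2$ analogous to the decisive step $b = 10$ in the proof of Lemma~\ref{l9}.
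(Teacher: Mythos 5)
Your overall scaffolding — split the biconditional, reduce the interesting direction to the case $\ind A = 3$, and observe that an anisotropic $G$ with $X_2$ having no coprime-to-$3$ zero-cycle, Tits algebra of index $3$, but $G_{k(\SB(A))}$ isotropic, would force $J_3(G) = (1,1)$ — is correct and matches the paper's setup. Your argument for $\Leftarrow$ (computing $J_3(G_{k(\SB(A))}) = (0,0)$ directly and citing Lemma~\ref{iso.lem}) is valid and is, if anything, a little cleaner than the paper's, which tensors the given odd-degree extension with $k(\SB(A))$ and invokes the index-$1$ case.

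The genuine gap is in the crux subcase $\ind A = 3$: you correctly isolate that the obstruction to a proof is the apparent drop of $j_2$ from $1$ to $0$ across the Severi--Brauer extension, but you leave the contradiction as a plan (``I plan to derive a contradiction via a weak-special-correspondence argument''). That plan is not the route the paper takes, and it is unlikely to succeed as stated. A Proposition~\ref{prpr}-type argument applied to $X_2$ or $X_4$ would produce the same numerics $b = 10$ (or $b = 9$) whether $J_3(G) = (1,0)$ or $(1,1)$, yet groups with $J_3 = (1,1)$ do exist; so the hypotheses of Proposition~\ref{prpr} must fail in the $(1,1)$ case (in fact the needed $k(Y)$-point for the application of Proposition~\ref{lekarp2} is unavailable), and you would first have to identify and repair exactly that failure using your extra hypothesis ``$G_{k(\SB(A))}$ isotropic''. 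The paper instead goes straight to the punch line: since $J_3(G_{k(\SB(A))}) = (0,0)$, the variety $X_\Delta$ has a coprime-to-$3$ zero-cycle over $k(\SB(A))$, while $\SB(A)$ has a rational point over $k(X_\Delta)$; hence by Karpenko's theory the upper motives of $X_\Delta$ and $\SB(A)$ are isomorphic. But their Poincar\'e polynomials are $(1+t^4+t^8)(t^{3^{j_1}}-1)/(t-1)$ and $(t^{\ind A}-1)/(t-1)$, which are never equal — an immediate contradiction that requires no weak-special-correspondence machinery at all. Separately, your write-up restricts the Proposition~\ref{gille} reduction to the appeal to Lemma~\ref{l9}, but leaves the positive-characteristic case of the whole ``only if'' direction untreated; the paper handles it with a nontrivial argument (presenting $G$ as a twist of an isotropic $G'$, killing the mod-$3$ Rost invariant by a central twist via \cite{Peyre:deg3}, \cite{Karp:cod}, and \cite{GQ}, then lifting to characteristic zero). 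Without that, the argument proves the lemma only over fields of characteristic zero.
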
 
\begin{proof}
Suppose first that $\car k = 0$, $G$ is anisotropic, $G_{k(\SB(A))}$ is isotropic, and every zero-cycle of $X_2$ has degree divisible by 3.  We know by Lemma~\ref{l9}, Corollary~\ref{cor11}, and Lemma \ref{iso.lem} that $j_2=1$.
Since $G_{k(\SB(A))}$ is isotropic and $A_{k(\SB(A))}$ is split, $X_2$ has a zero-cycle over $k(\SB(A))$ of degree 1 or 2, hence $J_3(G_{k(\SB(A))})=(0,0)$ by Lemma \ref{l2}.

On the other hand, $\ind A_{k(X_\Delta)}=1$. Therefore, the upper motives
$\U(X_\Delta)$ and $\U(\SB(A))$ are isomorphic. Their Poincar\'e polynomials equal
$$(1+t^4+t^8)(t^{3^{j_1}}-1)/(t-1)$$ and $(t^{\ind A}-1)/(t-1)$ respectively.
In particular, they are not equal for any values of $j_1$ and $\ind A$.
Contradiction, so the ``only if" direction is proved if $\car k = 0$ and $G$ is anisotropic; this is the crux case.

If $G$ is isotropic, then it is split or has semisimple anisotropic kernel of type $2\A_2$ or $\D_4$.
In the first two cases, $X_2$ has a rational point and in the third case it has a point over a quadratic extension
of $k$.  Thus we have proved ``only if" when $\car k = 0$ or $G$ is isotropic.

So consider the case where $\car k$ is a prime $p$, $G_{k(\SB(A))}$ is isotropic, and $G$ is anisotropic;
in particular, $A$ is not split, hence, by our assumptions has index $3$.
Then there is a simply connected isotropic group $G'$
(with anisotropic kernel of type $2\A_2$) and a class $\eta \in H^1(k, G')$ such that $G$ is isomorphic to
 $G'$ twisted by $\eta$.  We control the mod-3 portion $r_{G'}(\eta)_3$ of the Rost invariant of $\eta$, which belongs
 to $H^3(k, \zz/3\zz(2))$.  Clearly, $G'$ is split by $k(\SB(A))$, so our hypothesis on $G$ gives that $k(\SB(A))$
 kills $r_{G'}(\eta)_3$.  It follows that $r_{G'}(\eta)_3 = (\zeta) \cdot [A]$ for some 
$\zeta \in k^{\times} / k^{\times 3}$ by \cite{Peyre:deg3} and \cite[Prop.~5.1]{Karp:cod},
 hence by \cite{GQ} we may replace $\eta$ by a twist by the class of a cocycle with values in the center of $G'$ and so assume 
that $r_{G'}(\eta)_3$ is zero.

One can find a simply-connected group $H$ of inner type $\E_6$
over a field $K$ of characteristic $0$ lifting $G'$ and $\zeta\in H^1(K,H)$ lifting $\eta$ as in
Proposition~\ref{gille}. In particular, $r_H(\zeta)_3=0$. Denote by $A_H$ the Tits algebra of $H$.
By \cite{Rost:CR} the twisted form is isotropic over $K(\SB(A_H))$,
and, thus, by Proposition~\ref{gille}(2c) and the characteristic zero case, we have proved the  ``only if'' part.

Now suppose that there is an extension $L/k$ of degree not divisible by 3 so that $X_2(L)$ is not empty.
If $A$ has index $1$, then $J_3(G) = (0,0)$ by Lemma \ref{l2}, and so $G$ is $k$-isotropic by Lemma~\ref{iso.lem}.  If $A$ has index $3$, then $L \otimes_k k(\SB(A))$ is a field of dimension not divisible by $3$ over $k(\SB(A))$,
hence the ``if" statement follows by the index $1$ case.
\end{proof}

\begin{rem}                                  
In case $\car k \ne 2$, one can use the Rost invariant to define a class $r(G) \in H^3(k, \zz/2\zz)$ depending only
on $G$, see \cite[\S7]{GGi}.  If $L/k$ is an extension such that $X_2(L)$ is nonempty, then certainly $r(G)$ is
killed by $L$, hence $[L:k] r(G) = 0$.  It follows that  $\deg \CH_0(X_2)$ is contained in $o(r(G))\zz$,
for $o(r(G))$ the order of $r(G)$, which is $1$ or $2$.  One can show that \emph{the conditions in Proposition \ref{iso.prop} are equivalent to $\deg \CH_0(X_2) = o(r(G)) \zz$.}
\end{rem}

\begin{cor} \label{nochar0}
Lemma~\ref{l9}, Corollary~\ref{cor10}, and Corollary~\ref{cor11} hold in any characteristic.
\end{cor}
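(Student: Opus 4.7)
The plan is to reduce each of the three statements from characteristic $p>0$ to the characteristic-zero case already handled, using Proposition~\ref{gille}. A first observation is that Corollaries~\ref{cor10} and~\ref{cor11} can be derived from Lemma~\ref{l9} in a characteristic-free manner: the proof of Corollary~\ref{cor10} combines Lemma~\ref{l9} (which gives $M_{1,0}\simeq\ff_3$) with the Chernousov--Gille--Merkurjev decomposition of $\M(X_2)$ over $k(X_2)$ and Karpenko's theorem (Proposition~\ref{lekarp1}), all valid in any characteristic; and the proof of Corollary~\ref{cor11} applies Lemma~\ref{l9} to $G_K$ for $K=k(\SB_3(A))$, then invokes the index-reduction formula together with a dimension mismatch between the upper motive of $X_2$ and that of $\SB_3(A)$, again characteristic-free. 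So the real task is to establish Lemma~\ref{l9} in characteristic~$p$.

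For this, given $G$ of inner type $\E_6$ over $k$ with $\car k=p>0$ and $J_3(G)=(1,0)$, I would apply Proposition~\ref{gille} to the simply connected cover of $G$ (admissible because for inner $\E_6$ at $p=3$ the $J_3$-invariant is well-defined on the twisted form and agrees between the simply connected and adjoint quotients). This yields a characteristic-zero group $H$ over $K$ together with $\zeta\in H^1(K,H)$ arising from a lift of $\xi\in H^1(k,G)$ via a common $R$-form $\cG$. The key claim will then be that $J_3(H_\zeta)=(1,0)$. Granting this, the characteristic-$0$ Lemma~\ref{l9} produces a zero-cycle of degree coprime to $3$ on $X_2^{H_\zeta}$, and Proposition~\ref{gille}\eqref{gille.zero} transfers this to a zero-cycle of dividing (hence coprime to~$3$) degree on $X_2^G$, finishing Lemma~\ref{l9} in characteristic~$p$.

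The main obstacle is justifying $J_3(H_\zeta)=J_3(G)$. The first entry $j_1=v_3(\ind A)$ should be preserved because the Tits algebra lifts to an Azumaya algebra over $R$, and both its generic and special fibers carry the same index (equivalently, Brauer classes lift injectively along $\Br(k)\hookrightarrow\Br(R)\to\Br(K)$ preserving index). For the second entry $j_2$, I would argue by specialization: the twisted Borel variety $_\xi\cG/B$ is smooth and projective over $R$, and the Chow ring of the split Borel variety of $\E_6$ over a separable closure is canonically identified in both characteristics (being cellular with a Schubert basis independent of the ground field, as discussed in Section~\ref{secSteenrod}). One then needs to show that a homogeneous cycle in this common absolute Chow ring is $k$-rational for the special fiber if and only if its canonical lift is $K$-rational for the generic fiber; this should follow by spreading out rational cycles over $R$ in both directions, using smooth base change. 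Once this equivalence of rationality is in place, the definition of $J_p$ in~\ref{Jinv.def} forces $J_3(H_\zeta)=J_3(G)$, and the reductions outlined in the first two paragraphs conclude the proof.
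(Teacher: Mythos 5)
Your first paragraph is on target and matches the paper: it is indeed enough to prove Lemma~\ref{l9} in positive characteristic, after which Corollaries~\ref{cor10} and~\ref{cor11} follow formally. However, your strategy for transporting Lemma~\ref{l9} has a genuine gap, and the paper takes a different route that is designed to avoid exactly this issue.

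The gap is in the claim that $J_3({}_\zeta H)=J_3(G)$. As you note, the direction ``$K$-rational implies $k$-rational'' is available via the specialization map $\CH(\cX_K)\to\CH(\cX_k)$ (extend a cycle from the generic fiber to the $R$-model $\cX$, then take Gysin pullback to the special fiber). But the converse --- that a $k$-rational cycle on $\cX_k$ lifts to an $R$-cycle and hence to a $K$-rational cycle --- is precisely what does \emph{not} hold in general: the Gysin pullback $\CH(\cX)\to\CH(\cX_k)$ is not surjective (compare the familiar jumping of Picard rank under specialization). So specialization only gives you $J_3({}_\zeta H)\ge J_3(G)$ entrywise. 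Combined with the Azumaya/index argument for $j_1$, this leaves open the possibility $J_3({}_\zeta H)=(1,1)$, and in that case the characteristic-zero Lemma~\ref{l9} would not apply and you could not produce the needed zero-cycle on $X_2^{{}_\zeta H}$. Proposition~\ref{gille}(\ref{gille.Tits}) controls Tits indexes over \emph{finite} extensions of $K$, but that does not rule out $(1,1)$ either, since the content of Lemma~\ref{l9} is exactly that $J_3=(1,0)$ forces isotropy over a degree-coprime-to-3 extension, which is what one is trying to establish.

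The paper sidesteps the problem by not attempting to transport the $J$-invariant at all. The lifting machinery of Proposition~\ref{gille} controls the \emph{Rost invariant} cleanly (part~\eqref{gille.symb}), and this control is already packaged into Lemmas~\ref{iso.lem} and~\ref{iso.prop}, both of which are stated and proved without any restriction on $\car k$. The actual proof of Corollary~\ref{nochar0} then runs: $J_3(G)=(1,0)$ gives $\ind A = 3$ and, since $J$ is non-increasing under field extensions and $j_1$ drops to $0$ over $k(\SB(A))$, also $J_3(G_{k(\SB(A))})=(0,0)$; then Lemma~\ref{iso.lem} shows $G_{k(\SB(A))}$ is isotropic, and Lemma~\ref{iso.prop} yields the zero-cycle on $X_2$ of degree coprime to $3$. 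In short, you want to carry the $J$-invariant across the lift, but the lifting proposition only gives you the Rost invariant and Tits indexes; the paper reformulates the step that needs lifting entirely in those terms.
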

\begin{proof}
Clearly, it suffices to prove only Lemma~\ref{l9}, so assume $J_3(G)=(1,0)$.  Then $G$ is split by an extension of degree not divisible by 9 \cite[Prop.~6.6]{PSZ}, so $\ind A=3$ and
$J_3(G_{k(\SB(A))})=(0,0)$.
Therefore by Lemma~\ref{iso.lem} $G_{k(\SB(A))}$ is isotropic and by Lemma~\ref{iso.prop}
$X_2$ has a zero-cycle of degree coprime to $3$.
\end{proof}

\begin{cor} \label{E6ind}
Let $G$ be a group of inner type $\E_6$ with Tits algebra $A$.
If $G \times k(\SB(A))$ 
is isotropic, then $A$ has index dividing $3$.
\end{cor}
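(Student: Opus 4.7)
The plan follows the strategy of Lemma~\ref{iso.prop} and Corollary~\ref{cor11}: reduce to characteristic zero and then derive a contradiction by forcing an equality of upper motives whose Poincar\'e polynomials cannot match. If $\ind A = 1$ there is nothing to prove, so assume $\ind A > 1$; the goal is to derive a contradiction from $\ind A \ge 9$. Applying Proposition~\ref{gille} with $\ell = 3$, one may lift $G$ to a group $H$ over a characteristic zero field $K$, together with a cocycle $\zeta$, such that $_\zeta H$ has Tits algebra of the same $3$-primary index as $A$ and such that isotropy of $(_\zeta H)_{K(\SB(A_H))}$ is equivalent to isotropy of $G_{k(\SB(A))}$ by part (2)(b); hence assume $\car k = 0$.

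The first step is to establish $J_3(G_{k(\SB(A))}) = (0,0)$. If $X_2$ admits a zero-cycle of degree coprime to $3$, Lemma~\ref{l2} gives $\ind A \le 3$ and we are done, so assume every zero-cycle of $X_2$ has degree divisible by $3$. Over $k(\SB(A))$ the Tits algebra splits, so the first entry of $J_3(G_{k(\SB(A))})$ is zero. Since $G_{k(\SB(A))}$ is isotropic with split Tits algebra, Tits' classification of Tits indices for inner $\E_6$-groups forces the index to be $\{1,6\}$ or $\Delta$, and in either case $X_2$ acquires a zero-cycle of degree $1$ or $2$ over $k(\SB(A))$, as in the end of the proof of Lemma~\ref{iso.prop}. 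Lemma~\ref{l2} applied over $k(\SB(A))$ then yields $J_3(G_{k(\SB(A))}) = (0,0)$.

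From $J_3(G_{k(\SB(A))}) = (0,0)$ and \cite[Cor.~6.7]{PSZ}, $G_{k(\SB(A))}$ splits over a finite extension of degree coprime to $3$, so $X_\Delta$ has a zero-cycle of degree coprime to $3$ over $k(\SB(A))$. Conversely $A$ splits over $k(X_\Delta)$, so $\SB(A)$ acquires a rational point there. Combining Propositions~\ref{lekarp1} and \ref{lekarp2} identifies the upper motives of $X_\Delta$ and $\SB(A)$ modulo $3$; from Table~\ref{e6.poin} the former has Poincar\'e polynomial $R_{j_1,j_2}(t) = \frac{(t^{3^{j_1}}-1)(t^{4\cdot 3^{j_2}}-1)}{(t-1)(t^4-1)}$ while the latter has $(t^{\ind A}-1)/(t-1)$, and a direct factorization shows these agree only when $j_2 = 0$. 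Then Corollary~\ref{cor11} restricts $j_1 \in \{0,1\}$, forcing $\ind A \in \{1,3\}$ and contradicting $\ind A \ge 9$. The main obstacle I anticipate is the motivic-identification step: extracting a genuine isomorphism of upper motives from the existence of zero-cycles of degree coprime to $3$ in both directions, which requires Karpenko's characterization of motivic equivalence for projective homogeneous varieties rather than a mere summand relation.
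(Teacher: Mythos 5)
Your overall strategy matches the paper's: compare the Poincar\'e polynomials of the upper motives of $X_\Delta$ and $\SB(A)$, which must coincide once one knows $J_3(G_{k(\SB(A))})=(0,0)$. And your variation of the final step is sound: you extract $j_2=0$ and $\ind A=3^{j_1}$ directly from the polynomial identity and then use Corollary~\ref{cor11} to exclude $j_1=2$, whereas the paper first derives $j_2=1$ from Lemmas~\ref{l9}, \ref{iso.lem} and Corollary~\ref{cor11} and then observes the polynomials cannot match. Both routes give the same contradiction.

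However, your reduction to characteristic zero has a genuine gap. You invoke Proposition~\ref{gille}(2)(b) to claim that isotropy of $(_\zeta H)_{K(\SB(A_H))}$ is equivalent to isotropy of $G_{k(\SB(A))}$. But that part of Proposition~\ref{gille} speaks only of \emph{finite} extensions $L/K$ (and their residue fields $\bar L$); it says nothing about the function field $K(\SB(A_H))$. In fact this is exactly why the characteristic-$p$ case of Lemma~\ref{iso.prop} requires a separate and more delicate argument (a Rost-invariant computation using Peyre and Karpenko's results) that is tied to the hypothesis $\ind A \le 3$ and does not generalize to $\ind A\ge 9$. The paper sidesteps this by \emph{not} reducing to characteristic zero for this corollary: Corollary~\ref{nochar0} shows that Lemma~\ref{l9} and Corollary~\ref{cor11} already hold in all characteristics, and the characteristic-zero branch of the proof of Lemma~\ref{iso.prop} (which never uses $\ind A\le 3$) can therefore be run verbatim over $k$ to conclude that $X_2$ has a zero-cycle of degree coprime to $3$, whence $\ind A\mid 3$ by Lemma~\ref{l2}. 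If you replace your Proposition~\ref{gille} step by the observation that the needed lemmas are characteristic-free via Corollary~\ref{nochar0}, the rest of your argument goes through.

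(A minor additional remark: the step ``Combining Propositions~\ref{lekarp1} and \ref{lekarp2} identifies the upper motives of $X_\Delta$ and $\SB(A)$'' is not quite what those two propositions say, since $\SB(A)$ is not a $G$-homogeneous variety; the correct reference is Karpenko's general theory of upper motives for geometrically split varieties satisfying Rost nilpotence. The paper also states the isomorphism without proof, so this is not a point of difference, but you should be aware that \ref{lekarp1}--\ref{lekarp2} as stated do not literally cover this case.)
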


\begin{proof}
Since Lemma~\ref{l9} and Corollary~\ref{cor11} hold in any characteristic, we
can repeat the first two paragraphs of the proof of Lemma~\ref{iso.prop} without any restriction on the characteristic
of $k$ to see that $X_2$ has a zero-cycle of degree not divisible by 3.
\end{proof}

We summarize the relationship between the mod-3 $J$-invariant of $G$ and its Tits index and Tits algebra in Table \ref{e6.dict}.
We use here that by \cite[Prop.~5.3]{Jun11} $j_1=1$ iff $\ind A=3$.
\begin{table}[hbt]
\begin{center}
\begin{tabular}{c|ccccc}
$J_3(G)$&$(0,0)$&$(1,0)$&$(0,1)$&$(1,1)$&$(2,1)$ \\ \hline
\rb{Tits index of $G$}&\rb{split}&\begin{picture}(5,2)
    \multiput(0.5,0.5)(1,0){5}{\circle*{\darkrad}}
    \put(2.5,1.5){\circle*{\darkrad}}

    \put(0.5,0.5){\line(1,0){4}}
    \put(2.5,1.5){\line(0,-1){1}}

    \put(2.5,0.5){\circle{\lrad}}
    \put(2.5,1.5){\circle{\lrad}}
\end{picture}
&\multicolumn{3}{c}{\rb{$\cdots$ anisotropic $\cdots$}}\\
index of $A$&1&3&1&3&9 or 27 
\end{tabular}                           
\caption{Dictionary relating the mod-3 $J$-invariant of $G$, the Tits index of $G$ over a 3-closure of $k$, and the Tits algebra $A$ of $G$} \label{e6.dict}
\end{center}
\end{table}

\begin{prop}\label{kere6}
Let $G$ be a simply connected group of inner type $\E_6$ over $k$ such that $X_2$ has a zero-cycle of degree $1$.
Write $Z$ for the center of $G$.
\begin{enumerate}
\item \label{kere6.1} The Rost invariant $r_G$ is injective on the image of $H^1(k, Z) \to H^1(k, G)$.
\item \label{kere6.2} For $\xi\in H^1(k,G)$, if the mod-$3$ component of the Rost invariant
$r_G(\xi)$ is a symbol,
then $$\gcd\{[L:k]\mid L\text{ kills }\xi\}=o(r_G(\xi)).$$
\end{enumerate}
\end{prop}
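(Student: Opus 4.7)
The hypothesis that $X_2$ has a zero-cycle of degree $1$ forces, via Lemma~\ref{l2}, $J_3(G)\in\{(0,0),(1,0)\}$, so the Tits algebra $A$ has index at most $3$; the center $Z$ of $G$ is $\mu_3$.

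For (1), the plan is to use the known formula for the restriction of the Rost invariant to the center: the composite
\[
H^1(k,Z)\xrightarrow{\iota}H^1(k,G)\xrightarrow{r_G}H^3(k,\qq/\zz(2))
\]
factors through $H^3(k,\mu_3^{\otimes 2})$ and equals $\eta\mapsto\pm\,\eta\cup[A]$ for $[A]\in H^2(k,\mu_3)$ the Tits algebra class; this is standard for simply connected groups of inner type $\E_6$. If $r_G(\iota(\eta))=0$, then $\eta\cup[A]=0$; since $\ind A\le 3$, the Merkurjev--Suslin description of the kernel of $\cup[A]$ gives $\eta\in\Nrd(A^\times)\cdot k^{\times 3}/k^{\times 3}$. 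The long exact sequence for $1\to Z\to G\to G/Z\to 1$ identifies $\ker\iota$ with the image of the connecting map $(G/Z)(k)\to H^1(k,Z)$, which is precisely this reduced-norm subgroup. Hence $\iota(\eta)=0$.

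For (2), the divisibility $o(r_G(\xi))\mid\gcd\{[L:k]:L\text{ kills }\xi\}$ is the standard corestriction argument: $[L:k]\,r_G(\xi)=\cor_{L/k}\res_{L/k}r_G(\xi)=0$. For the reverse direction I would construct a killing field $L/k$ of degree exactly $d:=o(r_G(\xi))$. Decompose $r_G(\xi)=r_G(\xi)_2+r_G(\xi)_3$; by hypothesis the mod-$3$ part is a symbol $(a)\cup\beta\in H^3(k,\mu_3^{\otimes 2})$, killed by $L_3:=k(\sqrt[3]{a})$ of degree dividing $3$. For the mod-$2$ part, under the zero-cycle hypothesis on $X_2$ I aim to show $r_G(\xi)_2\in H^3(k,\zz/2)$ is killed by some $L_2/k$ of degree $\le 2$, by identifying it with an Arason-type invariant of a quadratic form canonically attached to $\xi$ in the spirit of \cite[\S7]{GGi} and using Lemma~\ref{Rost99} to descend symbols across odd-degree extensions where needed. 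The composite $L:=L_2\otimes_k L_3$ then has degree exactly $d$ and kills $r_G(\xi)$. Since the zero-cycle hypothesis persists over $L$, the proof is completed by a kernel claim: under our hypothesis, $\ker r_G\subseteq\iota(H^1(k,Z))$, which over $L$ yields $\xi_L\in\iota(H^1(L,Z_L))$, and then part (1) applied over $L$ gives $\xi_L=0$.

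The main obstacle is establishing the kernel inclusion $\ker r_G\subseteq\iota(H^1(k,Z))$ under the zero-cycle hypothesis. I would attack this by combining the rigid motivic classification from Section~\ref{s8} (Table~\ref{e6.dec} for $J_3\in\{(0,0),(1,0)\}$ pins down the decompositions of $\M(X_2)$ and $\M(X_4)$) with a norm-principle or Chernousov-style argument showing that a cocycle with trivial Rost invariant must preserve the Tits index of $G$ and so arises from a central twist. A secondary difficulty is the mod-$2$ symbol property of $r_G(\xi)_2$, which is not formal and may require an ad hoc analysis specific to inner type $\E_6$, again using the invariant $r(G)\in H^3(k,\zz/2)$ of \cite[\S7]{GGi}.
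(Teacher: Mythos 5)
Your sketch of part (1) tracks the paper up to a crucial point, but then asserts more than you prove: you claim that $\ker\iota$ \emph{equals} the reduced-norm subgroup of $k^\times/k^{\times 3}$. The long exact sequence identifies $\ker\iota$ with the image of $(G/Z)(k)\to H^1(k,Z)$, but identifying that image with the reduced norms of $A$ modulo cubes is precisely the non-formal step, and you do not supply it. The paper needs (and establishes) only one inclusion: if $x\cup[A]=0$ then $x=N_{L/k}(y)$ for a cubic subfield $L\subset A$; over $L$ the algebra $A$ splits, hence $G_L$ splits, so $y\in\ker\bigl(H^1(L,Z)\to H^1(L,G)\bigr)$; and then the Gille--Merkurjev Norm Principle (applicable because $G/Z$ is $L$-rational) pushes this down to $x\in\ker\iota$. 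Without the norm principle or an equivalent, your claimed description of $\ker\iota$ is a gap.

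Your plan for part (2) diverges substantially from the paper's and, as you yourself flag, hinges on the unproved ``kernel claim'' $\ker r_G\subseteq\iota(H^1(k,Z))$. The paper never proves or uses that statement. Instead, after reducing to $r_G(\xi)=0$ and to $X_2(k)\ne\emptyset$ (both reductions using multiplicativity of the gcd), it argues by cases on $\ind A$: if $A$ is split then $G$ is split and the known triviality of the Rost kernel for split simply connected $\E_6$ gives $\xi=0$ outright; if $\ind A=3$ then a cubic splitting field of $A$ splits $G$ and kills $\xi$, contributing degree $3$, while on the other side ${}_\xi G$ is split over $k(\SB(A))$ (again by the split-$\E_6$ kernel), so Lemma~\ref{iso.prop} supplies extensions $L_i$ of degrees with $\gcd$ coprime to $3$ over which $X_2({}_\xi G)$ has a point; Tits's Witt-type theorem then makes $\xi_{L_i}$ a central twist, and part (1) over $L_i$ kills it. Combining the two families of killing fields gives $\gcd=1$. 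Note that the paper never produces a \emph{single} killing field of degree exactly $o(r_G(\xi))$ as your $L_2\otimes_k L_3$ plan requires; it only controls the gcd, which is all the statement needs. Your proposed route to the kernel claim via the motivic classification of Section~\ref{s8} and a Chernousov-type argument is considerably harder than, and not needed by, the argument above.
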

\begin{proof}
Write $A$ for the Tits algebra of $G$.  If $A$ is split, then $G$ is split
and $H^1(k, Z)$ has zero image in $H^1(k, G)$, so \eqref{kere6.1} holds.  If
$A$ has index 3, then $H^1(k, Z)$ is identified with $k^\times / k^{\times 3}$
and the composition $$H^1(k, Z) \to H^1(k, G) \to H^3(k, \Q/\Z(2))$$ is $x \mapsto \pm x \cdot [A]$ by \cite{GQ}.  By twisting, it suffices to show that this map has zero kernel.  But if $x \cdot [A]$ is zero, then $x$ is a reduced norm from $A$, i.e., there is a cubic extension $L$ of $k$ in $A$ so that $x = N_{L/k}(y)$ for some $y \in L$ by Merkurjev-Suslin if $\car k \ne 3$ and by \cite[Th.~6a]{Gille:inv} if $\car k =3$.  Now $L$ splits $A$, so $G$ is $L$-split and $y$ is in the kernel of $H^1(L, Z) \to H^1(L, G)$.  As $G/Z$ is rational as a variety over $L$, the Gille-Merkurjev Norm Principle implies that $x$ is in the kernel of $H^1(k, Z) \to H^1(k, G)$, completing the proof of \eqref{kere6.1}.

As for \eqref{kere6.2}, one quickly reduces to the case where $r_G(\xi)$ is zero (because the mod-2 and 
  mod-3 components of $r_G(\xi)$ are symbols --- for 2 this is by Lemma \ref{Rost99}), $X_2$ has a rational point, and $A$ has index 3.
There is a cubic extension of $k$ splitting $A$, hence splitting $G$, hence killing $\xi$.  On the other hand, ${_\xi}G \times k(\SB(A))$ is split, so by Lemma \ref{iso.prop} the ${_\xi}G$-variety $X_2$ has a point over extensions $L_1, \ldots, L_r$ such that $\gcd\{ [L_i:k] \}$ is not divisible by 3.  Over each $L_i$, $\xi$ is in the kernel of the map $H^1(L_i, G) \to H^1(L_i, G/Z)$ by Tits's Witt-type Theorem, so is equivalent to the class of a cocycle $z$ with values in $Z$.  By \eqref{kere6.1}, $\xi$ is killed by $L_i$.  This proves \eqref{kere6.2}.
\end{proof}

\subsection{Type $\E_7$.}

For use in this subsection and the next, we recall that the simple roots of $\E_7$ and $\E_8$ are numbered like this:
\begin{equation} \label{E78.roots}
\begin{picture}(0,1.9)
\put(0,0.4){\makebox(0,0.4)[b]{$\E_7$}}
\end{picture}
\begin{picture}(7,1.9)
    \multiput(1,0.7)(1,0){6}{\circle*{\darkrad}}
    \put(3,1.45){\circle*{\darkrad}}

    \put(1,.7){\line(1,0){5}}
    \put(3,1.45){\line(0,-1){0.75}}
    
    \put(1,0){\makebox(0,0.4)[b]{$1$}}
    \put(2,0){\makebox(0,0.4)[b]{$3$}}
    \put(3,0){\makebox(0,0.4)[b]{$4$}}
    \put(4,0){\makebox(0,0.4)[b]{$5$}}
    \put(5,0){\makebox(0,0.4)[b]{$6$}}
        \put(6,0){\makebox(0,0.4)[b]{$7$}}
    \put(3.3,1.3){\makebox(0,0.4)[b]{$2$}}
\end{picture} \quad
\begin{picture}(8,1.9)
    \multiput(1,0.7)(1,0){7}{\circle*{\darkrad}}
    \put(3,1.45){\circle*{\darkrad}}

    \put(1,.7){\line(1,0){6}}
    \put(3,1.45){\line(0,-1){0.75}}
    
    \put(1,0){\makebox(0,0.4)[b]{$1$}}
    \put(2,0){\makebox(0,0.4)[b]{$3$}}
    \put(3,0){\makebox(0,0.4)[b]{$4$}}
    \put(4,0){\makebox(0,0.4)[b]{$5$}}
    \put(5,0){\makebox(0,0.4)[b]{$6$}}
        \put(6,0){\makebox(0,0.4)[b]{$7$}}
                \put(7,0){\makebox(0,0.4)[b]{$8$}}
    \put(3.3,1.3){\makebox(0,0.4)[b]{$2$}}
\end{picture}
\ \begin{picture}(1,1.9)
\put(0,0.4){\makebox(0,0.4)[b]{$\E_8$}}
\end{picture}
\end{equation}
A group $G$ of type $\E_7$ has (essentially) one Tits algebra, as explained in \cite[6.5.1]{Ti71}.  It is a central simple algebra of exponent dividing 2 and index dividing 8.

\begin{prop}
Let $G$ be an anisotropic group of type $\E_7$ with Tits algebra $H$. If $G_{k(\SB(H))}$ is split, then $\ind H=2$.
\end{prop}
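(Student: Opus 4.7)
By Tits's classification \cite{Ti66}, the Tits algebra of a group of type $\E_7$ has exponent dividing $2$ and index in $\{1, 2, 4, 8\}$; since $G$ is anisotropic, $H$ is nonsplit and so $\ind H \in \{2, 4, 8\}$. Work throughout with $\ff_2$-coefficients. My plan is to identify the upper motive $M_{X_7}$ of the $27$-dimensional cominuscule Freudenthal variety $X_7$ with the upper motive $M_{\SB(H)}$ of $\SB(H)$, and then exclude $\ind H \in \{4, 8\}$ by a Poincar\'e polynomial argument.

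For the key identification, observe that vertex $7$ is cominuscule with Tits algebra $H$, so $X_7(K) \ne \emptyset$ iff $H \otimes_k K$ is split for any field $K \supseteq k$. In particular, $H$ splits over $k(X_7)$, so $\SB(H)_{k(X_7)}$ is a projective space and its upper motive $M_{\SB(H), k(X_7)}$ is the Tate motive $\ff_2$. Since $X_7$ is smooth projective with a $k(X_7)$-point, $\ff_2$ is an indecomposable direct summand of $\M(X_7)_{k(X_7)}$. On the other hand, $X_7(k(\SB(H))) \ne \emptyset$ because $G_{k(\SB(H))}$ is split by hypothesis. Both hypotheses of Proposition~\ref{lekarp2} are thus met (with $X = X_7$, $Y = \SB(H)$, $M = \M(X_7)$, $N = M_{\SB(H)}$), and we conclude that $M_{\SB(H)}$ is a direct summand of $\M(X_7)$ over $k$. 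Since $M_{\SB(H)}$ is indecomposable with $\Ch^0 \ne 0$, Krull--Schmidt and the defining property of the upper motive give $M_{X_7} \simeq M_{\SB(H)}$. By the standard description of the upper motive of a $p$-primary Severi--Brauer variety, this yields $P(M_{X_7}, t) = 1 + t + \cdots + t^{\ind H - 1}$.

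To conclude $\ind H = 2$, I would then rule out the polynomials $1 + t + t^2 + t^3$ and $1 + t + \cdots + t^7$ as possible Poincar\'e polynomials of the upper motive of $X_7$ for anisotropic $G$ of type $\E_7$. This step uses Corollary~\ref{c1}\eqref{f2} applied to $X_7$ with $\Psi = \{7\}$, combined with the explicit Chernousov--Gille--Merkurjev decomposition of $\M(X_7)_{k(X_7)}$ and the classification of admissible $J_2$-invariants for $\E_7$ from \cite{PSZ}. The main obstacle is this final exclusion, which is the $\E_7$-analog of the Poincar\'e polynomial matching carried out for $\E_6$ in the proof of Lemma~\ref{iso.prop} and Corollary~\ref{cor11}; it requires the multiplicative structure of $\Ch^*(\overline{X_7})$ at $p = 2$ as made computable by the algorithm of Section~\ref{secSteenrod}.
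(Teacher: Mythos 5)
Your identification of $M_{X_7}\simeq M_{\SB(H)}$ via Proposition~\ref{lekarp2} is correct and closely parallels the paper's first step; the paper works with the Borel variety $X_\Delta$ instead of $X_7$, but both choices succeed because each of $X_7$ and $X_\Delta$ splits $H$ and each has a point over $k(\SB(H))$. One small misstatement: ``$X_7(K)\ne\emptyset$ iff $H\otimes_k K$ is split'' is not an equivalence in general --- Tits's example cited in the paper gives an anisotropic group of type $\E_7$ over $\qq_p(t)$ with trivial Tits algebras, so a trivial Tits algebra does not force a rational point on $X_7$. You only use the correct implication (that $H$ is split over $k(X_7)$), so this does not damage the argument, but the phrasing is wrong.

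The genuine gap is the final step, which you flag as the ``main obstacle'' and do not carry out. Having $P(M_{X_7},t)=1+t+\cdots+t^{\ind H-1}$, you propose to exclude $\ind H\in\{4,8\}$ via the Chernousov--Gille--Merkurjev decomposition, Corollary~\ref{c1}\eqref{f2}, and the multiplication table of $\Ch^*(\overline{X_7})$. That route is unnecessarily heavy, and the paper's finish shows it can be avoided entirely. The missing idea is that $M_{\SB(H)}$ is \emph{also} the upper motive of the Borel variety $X_\Delta$ (by the same Proposition~\ref{lekarp2} argument: $H$ is split over $k(X_\Delta)$ and $X_\Delta(k(\SB(H)))\ne\emptyset$), and the Poincar\'e polynomial of the upper motive of $X_\Delta$ is given by the $J$-invariant theory of \cite{PSZ}: for $\E_7$ at $p=2$ it equals $(1+t)^{j_1}(1+t^3)^{j_2}(1+t^5)^{j_3}(1+t^9)^{j_4}$ where $J_2(G)=(j_1,j_2,j_3,j_4)$ with each $j_i\in\{0,1\}$. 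Since $\ind H$ is a power of $2$ and $1+t+\cdots+t^{2^m-1}=(1+t)(1+t^2)\cdots(1+t^{2^{m-1}})$ contains no factor $1+t^3$, $1+t^5$, or $1+t^9$, equating the two expressions immediately forces $j_1=1$, $j_2=j_3=j_4=0$, and $\ind H=2$. This is the paper's two-line conclusion; your proof stops short of it and replaces it with an open-ended computation that is not needed.
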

\begin{proof}
Let $J_2(G)=(j_1,j_2,j_3,j_4)$, $j_i=0,1$, be the $J$-invariant of adjoint
$\E_7$ (see \cite[Section~4.13]{PSZ}).
Since $G$ is anisotropic and $G_{k(\SB(H))}$ is split, $j_1=1$ by
\cite[Proposition~4.2]{PS10}.

Moreover, the upper motives of the variety of Borel subgroups $X_\Delta$ and of $\SB(H)$
are isomorphic. Their Poincar\'e polynomials equal 
\[
(1+t)(1+t^3)^{j_2}(1+t^5)^{j_3}(1+t^9)^{j_4} \quad \text{and} \quad \dfrac{t^{\ind H}-1}{t-1}.
\]
Since they are equal, we have $j_2=j_3=j_4=0$ and $\ind H=2$.
\end{proof}

The following lemma provides a crucial computation for the proof of Theorems \ref{prope6} and  \ref{pro816} below, which settle Rost's question described in the introduction.  The proof involves a computer calculation, which we did via two independent methods: the one described in section \ref{secSteenrod} and the one in \cite{DuZ07}.  Alternatively, the paper \cite{KI} computes the Steenrod operations on $\Ch(\BX_\Delta)$, and presumably the computer calculation here could be replaced by an argument connecting their computation with $X_7$.

\begin{lem}\label{le7}
Assume that the variety $X_7$ does not have a zero-cycle of odd degree,
the Tits algebras of $G$ are split, and $\car k\ne 2$.
Then $\ff_2(9)$ is a direct summand of the motive $\U(X_7)$ over $k(X_7)$.
\end{lem}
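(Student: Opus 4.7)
The plan is to construct a rational cycle in the first shell of $X_7$ over $k(X_7)$, in codimension $9$. Over $k(X_7)$ the variety $X_7$ acquires a rational point, so the upper motive of $X_{7,k(X_7)}$ is $\ff_2$; by the corollary following Theorem~\ref{t1}, any rational cycle in the first shell in codimension $9$ will then produce $\ff_2(9)$ as an indecomposable direct summand of $\M(X_7)_{k(X_7)}$. Concretely, I need to exhibit rational cycles $b \in \Ch^9(\overline{X_7})$ and $a \in \Ch^{18}(\overline{X_7})$ with $\deg(a \cdot b)=1$ in $\ff_2$; it suffices to have them defined already over $k$.

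Since the Tits algebras of $G$ are split, the ample generator $h \in \Ch^1(\overline{X_7})$ of the Picard group is $k$-rational by \cite[\S4]{PS10}, so every power $h^i$ is rational. The naive guess $(b,a) = (h^9, h^{18})$ does not work: one has $\deg(h^9 \cdot h^{18}) = \deg h^{27}$, and the hypothesis that $X_7$ carries no odd-degree zero-cycle forces this degree to be even. A more refined rational cycle in codimension $9$ is therefore needed.

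Two further sources of rational cycles on $\overline{X_7}$ are available. First, the Chern classes $c_j(TX_7)$ of the tangent bundle are rational over $k$, because $TX_7$ is a $G$-equivariant vector bundle on $X_7$. Second, Steenrod operations $S^j$ (well-defined mod $2$ since $\car k\ne 2$) commute with pullbacks and therefore preserve rationality. Applying the algorithm of Section~\ref{secSteenrod}, I would compute the multiplication table for $\Ch^*(\E_7/P_7) \otimes \ff_2$ in the Schubert basis, determine the expansions of $c_j(TX_7)$ and of their Steenrod images, and search among $\ff_2$-combinations of products $h^j \cdot c_{9-j}(TX_7)$ and their Steenrod squares for a candidate $b$ whose product with $h^{18}$ has odd degree.

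The main obstacle is the calculation itself: in codimension $9$ there are many Schubert classes on $\E_7/P_7$, and the pairing mod $2$ with $h^{18}$ must be evaluated exactly. This is best handled on a computer, and for reliability the output should be cross-checked by an independent implementation such as the Bott--Samelson algorithm of \cite{DuZ07}. Once an appropriate $b$ is produced, it lies in the first shell by construction, and the corollary after Theorem~\ref{t1} yields the desired $\ff_2(9)$ summand.
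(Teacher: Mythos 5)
The conclusion of the lemma concerns the \emph{upper motive} $M$ of $X_7$ over $k$: the intended claim, as it is used in Lemma~\ref{l10}, is that $\ff_2(9)$ is a summand of $M_{k(X_7)}$. The ``$M(X_7)$'' in the statement is a misprint for $M$. Your plan instead targets the much weaker statement that $\ff_2(9)$ is a summand of $\M(X_7)_{k(X_7)}$ — but that follows immediately from the \cite[Thm.~7.5]{CGM} decomposition over $k(X_7)$, where the eight Tate summands $\ff_2(0),\ff_2(1),\ff_2(9),\ff_2(10),\ff_2(17),\ff_2(18),\ff_2(26),\ff_2(27)$ are read off directly, and it does not say anything about which indecomposable summand of $\M(X_7)$ over $k$ swallows $\ff_2(9)$ after base change.

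More importantly, your proof strategy runs in exactly the wrong direction. You want to produce a $k$-rational cycle $b\in\Ch^9(\BX_7)$ lying in the first shell; the actual proof hinges on showing that \emph{no such $b$ exists}. Note first that your own observation about $\deg(h^9\cdot h^{18})$ generalizes: any product of two $k$-rational cycles of complementary codimension is a $k$-rational zero-cycle, hence of even degree by hypothesis, so the particular pairing $(b,a)$ you ask for over $k$ is impossible from the start. Working with the $\ff_2$-basis $\{h^9,e_5h^4,e_9\}$ of $\Ch^9(\BX_7)$, the paper shows via Steenrod operations that $e_5h^4$, $e_9$, and $e_5h^4+e_9$ are \emph{not} rational (if any were, multiplying its $S^8$ by a suitable rational class in codimension $10$ would give a rational zero-cycle of degree $1$, contradicting the hypothesis). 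So $h^9$ is the only $k$-rational class in codimension $9$, and a Poincar\'e-duality computation shows $h^9$ is not dual to the first-shell class $Z_{[1,3,4,2,5,4,3,1,7,6,5,4,2,3,4,5,6,7]}$ in codimension $18$, so $h^9$ is not in the first shell. Precisely this absence forces the conclusion: by Theorem~\ref{t1} and Proposition~\ref{p1}, a summand of $\M(X_7)$ starting in codimension $9$ in the first shell would have to be $M(9)$ with a $k$-rational first-shell generic point in $\Ch^9$, which does not exist; hence the $\ff_2(9)$ appearing over $k(X_7)$, which does lie in the first shell, cannot be the generic point of any shift of $M$ and must therefore sit inside $M_{k(X_7)}$. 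Had your computer search actually turned up a rational first-shell class in codimension $9$, it would have contradicted the lemma rather than proved it.
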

\begin{proof}
Let $h\in\Ch^1(\BX_7)$, $e_5\in\Ch^5(\BX_7)$, and $e_9\in\Ch^9(\BX_7)$
denote some Schubert cycles. Then independently of the choice of these
cycles, the elements $h^9$, $e_5h^4$, and $e_9$ form an $\ff_2$-basis
of $\Ch^9(\BX_7)$. Note also that the cycle $h$ is rational, since the
Tits algebras of $G$ are split.

We claim that the cycles $e_5h^4$, $e_9$, and $e_5h^4+e_9$ are not rational.

Indeed, a direct computation of Steenrod operations modulo $2$
shows that
$$e_5h^5\cdot S^8(e_5h^4)=e_9h\cdot S^8(e_9)=(e_5h^5+e_9h)S^8(e_5h^4+e_9)=\pt,$$
where $\pt$ denotes the class of a rational point on $\BX_7$.
Since by our assumptions $X_7$ has no zero-cycles of odd degree, the only rational cycle
in $\Ch^9(\BX_7)$ is $h^9$.

But the cycle $h^9$ does not lie in the first shell.
Indeed, an explicit computation of the decomposition
of \cite[Theorem~7.5]{CGM} for $\M(X_7)$ shows that over $k(X_7)$ its motive
contains exactly the following Tate motives:
$\ff_2$, $\ff_2(1)$, $\ff_2(9)$, $\ff_2(10)$, $\ff_2(17)$, $\ff_2(18)$,
$\ff_2(26)$, and $\ff_2(27)$, and that the cycle from the dual codimension
that corresponds to the Tate motive $\ff_2(9)$ equals
$Z_{[1,3,4,2,5,4,3,1,7,6,5,4,2,3,4,5,6,7]}$
in the notation of Section~\ref{secSteenrod}.
A direct computation using Poincar\'e duality shows that this cycle is not dual to $h^9$.

Since generic points of direct summands of $X_7$ are rational, no shift of
$\U(X_7)$ of $X_7$ starts in codimension $9$. Therefore the Tate motive $\ff_2(9)$, which
belongs to the first shell, must be a summand of $\U(X_7)_{k(X_7)}$.
\end{proof}

\begin{lem}\label{l10}
Assume that $G$ is anisotropic, the variety $X_7$ has no zero-cycles of odd degree, the
Tits algebras of $G$ are split, and $\car k\ne 2$. Then the height of $X_1$ equals $3$
and $G_{k(X_1)}$ has semisimple anisotropic kernel of type $\D_6$.
\end{lem}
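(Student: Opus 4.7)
The plan is to pin down the non-empty shells of $X_1$ (with $\ff_2$-coefficients) and check there are exactly three. By Proposition~\ref{p1}, each non-empty shell $\SH_\Psi$ supports a family of indecomposable summands of $\M(X_1)$ all isomorphic to Tate shifts of the upper motive $M_{X_\Psi}$, so the number of non-empty shells equals the number of distinct upper motives appearing in the decomposition of $\M(X_1)$ up to Tate shift.

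For the lower bound (height $\ge 3$), I exhibit three non-empty shells. The first, $\SH_{\{1\}}$, contains $1\in\Ch^0(\BX_1)$ dualised by the class of a rational point, both available over $k(X_1)$. For the second, Lemma~\ref{le7} gives $\ff_2(9)$ as a Tate summand of $\M(X_7)_{k(X_7)}$, which (by Rost nilpotence, valid here because $M_{X_7}$ is generically split) lifts to an indecomposable direct summand of $\M(X_7)$ starting in codimension $9$. Using Tits's classification of indexes of inner anisotropic $\E_7$ with split Tits algebras to verify that $X_7$ acquires a point over $k(X_1)$ and symmetrically $X_1$ acquires one over $k(X_7)$, two applications of Proposition~\ref{lekarp2} then transfer a shift of $M_{X_7}$ into $\M(X_1)$; this summand lies in a shell distinct from $\SH_{\{1\}}$ since $M_{X_1}$ and $M_{X_7}$ have different Poincar\'e polynomials. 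The third shell comes from a deeper $\SH_\Theta$ with $\Theta\supsetneq\{1\}$ (e.g.\ the upper motive of the Borel variety or of $X_{\{1,7\}}$), reflecting cycles on $\BX_1$ whose rationality requires more generic splitting of $G$; its existence is forced by the asymmetry of the Poincar\'e polynomial $P(X_1,t)$ modulo $P(M_{X_1},t)+P(M_{X_7},t)\cdot(\text{shift})$.

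For the upper bound (height $\le 3$), I apply Karpenko's Proposition~\ref{lekarp1}: every indecomposable summand of $\M(X_1)$ is a Tate shift of $M_{X_\Theta}$ for some $\Theta$ such that the Tits index of $G_{k(X_\Theta)}$ contains the Tits index of $G_{k(X_1)}$. Under our hypotheses, the Tits classification restricts this to a finite explicit list of candidates. I then balance the polynomial identity of Corollary~\ref{c1}(1),
\[
P(X_1,t)=\sum_\Theta P(M_{X_\Theta},t)\,Q_\Theta(t), \qquad Q_\Theta\in\zz_{\ge 0}[t],
\]
against the explicit Poincar\'e polynomial of $X_1$ for $\E_7$ and the three Poincar\'e polynomials identified above, to conclude that $Q_\Theta=0$ for every additional candidate $\Theta$.

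The main obstacle will be step two of the lower bound: producing the summand of $\M(X_1)$ isomorphic to a shift of $M_{X_7}$. This requires the Tits-index input (to justify the mutual rationality $X_1(k(X_7))\ne\emptyset$ and $X_7(k(X_1))\ne\emptyset$) plus the careful transfer via Proposition~\ref{lekarp2} of an indecomposable summand first from $\M(X_7)_{k(X_7)}$ down to $\M(X_7)$ and then across to $\M(X_1)$. Secondarily, the upper-bound Poincar\'e polynomial check is bookkeeping-heavy given the dimension of $\E_7/P_1$.
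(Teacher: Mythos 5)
Your strategy (directly exhibit three non-empty shells, then bound from above via Corollary~\ref{c1}(1)) is a legitimate-sounding plan, but the execution has several genuine gaps, and the paper's actual argument is structured quite differently.

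The paper's proof is short: from \cite[Th.~5.7(6)]{PS10} both $X_1$ and $X_7$ are not generically split; the Tits classification then forces the height of $X_1$ to be $2$ or $3$ (this one observation already gives both of your bounds simultaneously); one assumes height $2$, in which case the upper motives of $X_1$ and $X_7$ must coincide; and then Lemma~\ref{le7}, together with a direct CGM computation that $\ff_2(9)$ is \emph{not} a summand of $\M(X_1)$ over $k(X_7)$, gives a contradiction. Your proposal never uses the crucial second computation.

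The concrete problems with your argument are these. First, the claim that ``$X_7$ acquires a point over $k(X_1)$ and symmetrically $X_1$ acquires one over $k(X_7)$'' is not a consequence of the Tits classification; under the lemma's hypotheses it is in fact exactly what \emph{fails} in the height-$3$ case (the anisotropic kernel over $k(X_1)$ is $\D_6$, so the Tits index over $k(X_1)$ is $\{1\}$ and $X_7$ remains anisotropic; dually, over $k(X_7)$ the kernel is $\E_6$ and $X_1$ remains anisotropic). So the two applications of Proposition~\ref{lekarp2} you propose are not available. Second, even granting that mutual rationality, you would then have $M_{X_1}\cong M_{X_7}$ by Karpenko's theory, which directly contradicts your next assertion that ``$M_{X_1}$ and $M_{X_7}$ have different Poincar\'e polynomials''; the two sentences cannot both hold. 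Third, you misread Lemma~\ref{le7}: it asserts that $\ff_2(9)$ is a Tate summand of the upper motive $M$ of $X_7$ \emph{over $k(X_7)$}, not that it lifts to an indecomposable summand of $\M(X_7)$ over $k$ starting in codimension $9$ --- over $k$ the upper motive $M$ is indecomposable and starts in codimension $0$, so there is no such lift. Finally, the ``third shell forced by the asymmetry of the Poincar\'e polynomial'' is not an argument; nothing is computed, and there is no a priori reason the balance cannot be achieved with only two shells. The upper-bound bookkeeping is also not carried out. None of these pieces as written would survive contact with the actual Tits-index data for inner $\E_7$ with split Tits algebras.
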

\begin{proof}
The $(2, \{ 1 \})$-indexes of $G$ are $\{ 1 \}$, $\Psi := \{ 1, 6, 7 \}$, and $\Delta$, so by Example \ref{tower}
the nonempty shells on $X_1$ are the first shell, which is contained in $\SH_{\leqslant \Psi}$, which is contained
in the last shell.

By \cite[Th.~5.7(6)]{PS10} the varieties $X_1$ and $X_7$ are not generically split.
Therefore by the Tits classification \cite{Ti66} the height of $X_1$ is $2$ or $3$.
Assume that it is two.
Then the upper motives $\U(X_7)$ and $\U(X_1)$ are isomorphic.

By Lemma~\ref{le7} the motive $\U(X_7)$ has the property
that $\ff_2(9)$ is its direct summand over $k(X_7)$.
On the other hand, a direct computation using \cite[Th.~7.5]{CGM} shows that
$\ff_2(9)$ is not a direct summand of the motive of $X_1$ over $k(X_7)$.
Contradiction.
\end{proof}                            

\begin{thm}\label{prope6}
Let $G_0$ be a split, simply connected group of type $\E_7$,
$\xi\in H^1(k,G_0)$, and $G={}_\xi(G_0)$.
The following conditions are equivalent
\begin{enumerate}
\item \label{prope6.rost} $6r_{G_0}(\xi)=0$ and the mod-$2$ component of
$r_{G_0}(\xi)$ is a symbol;
\item \label{prope6.var} the $G$-variety $X_7$ has a rational point;
\item \label{prope6.cocycle} the element $\xi$ lifts to $H^1(k,\E_6)$, where $\E_6$ stands
for the split simply connected group of type $\E_6$.
\end{enumerate}
\end{thm}

This result settles the question raised in the last sentence of \cite{Sp06}, which asked whether there exists a criterion on $r_{G_0}(\xi)$ for whether \eqref{prope6.cocycle} holds.

The proof uses the following notion: suppose $\alpha \!: H \to G_0$ is a homomorphism of absolutely simple simply connected groups.  Then the composition $H^1(-, H) \xrightarrow{\alpha} H^1(-, G_0) \xrightarrow{r_{G_0}} H^3(-, \Q/\Z(2))$ equals $n_\alpha r_H$ for some nonnegative integer $n_\alpha$ called the \emph{Dynkin index} of $\alpha$, see \cite[pp.~122, 123]{GMS} for its basic properties.  It can be calculated from root system data as follows.  Let $T$ be a maximal torus of $G_0$ and $U$ be a maximal torus in $H$ such that $\alpha(U) \subseteq T$.  There is a unique Weyl-invariant quadratic form $q$ on the coroot lattice $\Hom(\Gm, T)$ that takes the value 1 on short coroots, and the Dynkin index $n_\alpha$ is the value of $q$ on the image under $\alpha$ of a short coroot in $\Hom(\Gm, U)$.

\begin{proof}[Proof of Theorem \ref{prope6}]
Assume \eqref{prope6.rost}, and that \eqref{prope6.var} fails; we seek a contradiction.
By Proposition~\ref{gille} we may assume that $\car k=0$.
A $X_7$ has no rational point, it has no zero-cycle of odd degree as in Cor.~\ref{E7.7}.

By Lemma~\ref{l10}, the anisotropic kernel
of $G_{k(X_1)}$ has type $\D_6$ and, thus, equals $\mathrm{Spin}(q)$
for a $12$-dimensional quadratic form $q$ with trivial discriminant
and trivial Clifford invariant.  Because the inclusion $\D_6 \subset \E_7$ has Dynkin index 1,
the Arason invariant
of $q$ is also a symbol. This gives a contradiction with \cite[Lemma~12.5]{Ga09}, hence \eqref{prope6.rost} $\Rightarrow$ \eqref{prope6.var}.

\eqref{prope6.cocycle} obviously implies \eqref{prope6.rost}.
Assume \eqref{prope6.var}. By Tits's Witt-type theorem, $\xi$ is equivalent to
the class of a cocycle taking values in the parabolic subgroup $P_7$. Let $L$ be the Levi
part of $P_7$. By \cite[Exp. XXVI, Cor.~2.3]{SGA3.3} $H^1(k,P_7)=H^1(k,L)$.
Then $\xi\in H^1(k,L)$ comes from $H^1(k,\E_6)$
by the exact sequence $$1\to\E_6\to L\to\mathbb{G}_m\to 1$$
and by Hilbert 90.
\end{proof}

The split simply connected group $\E_6$ contains a split group $\Gtwo$ of that type, and the inclusion $\Gtwo \subset \E_6$ has Dynkin index 1.

\begin{cor} \label{E7.rinv}
The map $H^1(k, \Gtwo) \to H^1(k, G_0)$ identifies $H^1(k, \Gtwo)$ with the subset 
\[
\{ \xi \in H^1(k, G_0) \mid \text{$r_{G_0}(\xi)$ is a symbol in $H^3(k, \Z/2\Z(2))$} \}.
\]
For each such $\xi$, the kernel of the Rost invariant $H^1(k, {_\xi(G_0)}) \to H^3(k, \Z/12\Z(2))$ is zero.
\end{cor}

In the statement, $G_0$ is split simply connected of type $\E_7$, as in Theorem \ref{prope6}.  But note that the statement holds verbatim if $G_0$ is instead taken to be $\E_6$.

\begin{proof}
The Rost invariant $r_{\Gtwo}$ identifies $H^1(k, \Gtwo)$ with the set of symbols in $H^3(k, \Z/2\Z(2))$, cf.~\cite[p.~44]{GMS}.  Hence, as the total inclusion $\Gtwo \subset \E_6 \subset G_0$ has Dynkin index 1, the image of $H^1(k, \Gtwo)$ is contained in the displayed set.  Conversely, if $\xi, \xi'$ are in the displayed set and $r_{G_0}(\xi) = r_{G_0}(\xi')$, then $\xi, \xi'$ come from $H^1(k, \E_6)$ by Theorem \ref{prope6}, hence $\xi = \xi'$ by the analogous property for $\E_6$.  The second claim follows from the first by twisting.
\end{proof}

\begin{rem}
This corollary includes as a special case that $\ker r_{G_0} = 0$.
So it gives a fourth proof of this statement, with the first three being Garibaldi (2001), Chernousov (2003), and Petrov-Semenov \cite{PS10}.
\end{rem}

It is conjectured that
the Rost invariant $H^1(k, {_\xi(G_0)}) \to H^3(k, \Z/12\Z(2))$ has zero kernel whenever $\xi$ satisfies Theorem \ref{prope6}\eqref{prope6.rost}, see \cite[11.11]{G:lens}.

\begin{cor} \label{E7.HPC}
If $\car k = 0$, $k(\sqrt{-1})$ has cohomological dimension $\le 2$, and $r_{G_0}(\xi)$ is a symbol in $H^3(k, \Z/2\Z)$, then the natural map 
\[
H^1(k, {_\xi(G_0)}) \rightarrow \prod_{\text{orderings $v$ of $k$}} H^1(k_v, {_\xi(G_0)})
\]
has zero kernel.
\end{cor}

That is, the ``Hasse Principle Conjecture II" holds for the group $_\xi(G_0)$.  This is new.  The analogous statement in prime characteristic is Serre's ``Conjecture II", which is known for these groups by, e.g., \cite{Gille:sc}.

\begin{proof}[Proof of Cor.~\ref{E7.HPC}]
The hypothesis on $k$ gives that $$H^3(k, \Q/\Z(2)) = H^3(k, \Z/2\Z),$$ and the claim is obvious from Corollary \ref{E7.rinv} and the injectivity of the map $H^3(k, \Z/2\Z) \to \prod H^3(k_v, \Z/2\Z)$.
\end{proof}

We can also prove a new case of the local-global principle studied in \cite{ParPreeti}.  A \emph{global field} $k$ is a number field or a finite extension $\ff_p(t)$. We write $k_v$ for the completion of $k$ with respect to a valuation $v$.

\begin{cor} \label{E7.PP}
Let $C$ be a proper, smooth, and geometrically integral curve over a global field $k$.  If 
\begin{enumerate}
\item \label{E7.PP.k} $G$ is the base change to $k(C)$ of a simply connected group of type $\E_7$ with trivial Tits algebras over $k$; or 
\item \label{E7.PP.gen} $G = {_\xi(G_0)}$ for some $\xi \in H^1(k(C), G_0)$ such that $r_{G_0}(\xi)$ is a symbol in $H^3(k(C), \Z/2\Z(2))$,
\end{enumerate}
then the natural map
\[
H^1(k(C), G) \to \prod_{\text{valuation $v$ of $k$}} H^1(k_v(C), G)
\]
has zero kernel.
\end{cor}

\begin{proof}
Suppose we are in case \eqref{E7.PP.gen} and $x \in H^1(k(C), G)$ has zero image in $H^1(k_v(C), G)$ for all $v$.
Then $r_G(x)$ has zero image under $$H^3(k(C), \Z/12\Z(2)) \to \prod_v H^3(k_v(C), \Z/12\Z(2)),$$ hence $r_G(x)$ equals zero by \cite[Th.~0.8(2)]{Kato:Hasse}, and 
Corollary~\ref{E7.rinv} gives the claim.

In case \eqref{E7.PP.k}, let $\xi \in H^1(k, G_0)$ be such that $G = {_{\xi} G_0} \times k(C)$.  Then $r_{G_0}(\xi)$ belongs to $H^3(k, \Q/\Z(2)) = H^3(k, \Z/2\Z(2))$, so it is necessarily a symbol, i.e., \eqref{E7.PP.k} is a special case of \eqref{E7.PP.gen}.
\end{proof}

With this same kind of proof, combined with the arguments from \cite[\S4.3]{HHK:CMH}, one can also prove a local-global principle for $_\xi(G_0)$ as in Corollaries \ref{E7.HPC} and \ref{E7.PP}, but with the field replaced by the function field of a curve over a complete discretely valued field.

\begin{lem}\label{l11}
Let $q$ be a regular $12$-dimensional quadratic form with trivial discriminant
over a field $k$ with $\car k\ne 2$ such that the respective special orthogonal
group has  $J$-invariant $(0,1,0)$. Then $q$ is isotropic.
\end{lem}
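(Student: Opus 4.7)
The plan is to argue by contradiction. Suppose $q$ is anisotropic; set $G=\Spin(q)$, which is of inner type $\D_6$, and let $X$ denote the projective quadric $\{q=0\}$, a smooth variety of dimension $10$. Because $\dim q$ is even and $\mathrm{disc}(q)$ is trivial, $\Ch^*(\overline X)\otimes\ff_2$ has its usual Schubert basis with a two-dimensional piece in middle codimension, and $P(X,t)$ is known explicitly.

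The first step is to convert $J_2(G)=(0,1,0)$ into a concrete Poincar\'e polynomial for the upper motive $M$ of $X$ with $\ff_2$-coefficients. For $\D_6$ at $p=2$ the degrees of the generators of $\Ch^*(\overline G)\otimes\ff_2$ are explicit (see \cite[\S4]{PSZ}, keeping in mind the caveat for type $\D$ from \ref{Jinv.def}), and the single nonzero slot $j_2=1$ pins $P(M,t)$ to a specific short product. This is the crucial input.

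Next I would feed this into Corollary~\ref{c1} and Theorem~\ref{t1}. After subtracting from $P(X,t)$ the shifts of $P(M,t)$ corresponding to rational elements of the first shell of $X$, the remainder must be a non-negative combination of Poincar\'e polynomials of the upper motives $M_\Theta$ for $\emptyset\subsetneq\Theta$. By Proposition~\ref{p1} each $M_\Theta$ is the upper motive of $X_\Theta$, which by Karpenko's theorem (Proposition~\ref{lekarp1}) corresponds to some admissible Tits index of an anisotropic kernel of type $\D_6$. The Tits classification leaves only a short list of candidates.

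The main obstacle, and crux of the argument, is to exclude every one of these remaining candidates. The expected conclusion is that, in each case, Theorem~\ref{t1} applied to a rational element of the first shell together with a suitable power of the hyperplane class $h$ forces a Schubert cycle $l_i$ of low codimension on $\overline X$ to be rational. In particular, a rational $l_0\in\Ch_0(\overline X)$ supplies an odd-degree zero cycle on $X$, hence by Springer's theorem a $k$-rational point, contradicting the assumed anisotropy of $q$ and thereby proving the lemma.
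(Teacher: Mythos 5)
Your proposal has a genuine gap, and the route you sketch is not the one the paper takes. First, a small point: the group appearing in the statement is the special orthogonal group of $q$, not $\Spin(q)$; the $J$-invariant for type $\D$ at $p=2$ depends on the isogeny type (see \ref{Jinv.def}), so this matters. More seriously, the "crucial input" you describe — reading off $P(M,t)$ for the upper motive $M$ of the quadric $X$ directly from $J_2(G)=(0,1,0)$ — does not work: $X=X_1$ is not generically split here (the form has height two, splitting pattern $(2,4)$), so what the $J$-invariant determines is the Poincar\'e polynomial of the upper motive $N$ of the \emph{Borel} variety, namely $P(N,t)=1+t^3$, not the upper motive of $X$ itself. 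Finally, the step you call the crux — using Theorem~\ref{t1} to "force a Schubert cycle $l_i$ of low codimension to be rational" — is not what Theorem~\ref{t1} does. Multiplying a generic point by rational cycles (powers of $h$) shifts direct summands within a shell and produces generic points lying along $1, h, h^2, \ldots$; it never lands on the dual Schubert classes $l_i$, and there is no mechanism in the shell machinery to declare $l_0$ rational. Without some further input, the Poincar\'e-polynomial bookkeeping via Corollary~\ref{c1} and Karpenko's theorem does not reach a contradiction.

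The missing idea is the norm-variety dimension constraint. The paper computes (via \cite[Th.~7.5]{CGM}) that $\M(X_{k(X)})\simeq\bigoplus_{i=0,1,9,10}\ff_2(i)\oplus\bigoplus_{i=2}^5 N_{k(X)}(i)$, observes that the rational cycles in the first shell are exactly $1$ and $h$ (so $Q(t)=1+t$), and then invokes Proposition~\ref{prpr}, whose hypotheses are precisely these data. That proposition, which rests on Rost's theorem that a variety carrying a weak special correspondence has dimension $p^n-1$ (Lemma~\ref{le62}, via Corollary~\ref{cor1}, or alternatively Vishik's \cite[Th.~2.1]{Vi10} for quadrics to relax $\car k=0$ to $\car k\ne 2$), forces the parameter $b=\frac{1+t+t^9+t^{10}}{1+t}\big|_{\text{exponent}}=9$ to equal $2^n-1$ for some $n$. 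Since $9$ is not of that form, $q$ cannot be anisotropic. You should incorporate this dimension constraint; without it the contradiction cannot be reached by shell and Poincar\'e-polynomial considerations alone.
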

\begin{proof}
Assume that $q$ is anisotropic.

Let $G$ be the orthogonal group corresponding to $q$.
By \cite[Prop.~4.2]{PS10} the Clifford invariant of $q$ is trivial.
Therefore by the classification of $12$-dimensional quadratic forms $q$
has splitting pattern $(2,4)$. Let $Q=X_1$ be the projective quadric given
by $q=0$ and $h\in\Ch^1(\BX_1)$ the unique Schubert cycle.

By Example \ref{quadric.eg}, there are exactly two (non-empty) shells on $Q$, namely, $\SH_{\leqslant\{1\}}$ (the first
shell) and $\SH_{\leqslant\{3\}}$.
The powers $h^i\in\Ch^i(\BX_1)$ are rational and lie in the first shell if
$i=0,1$ and in $\SH_{\leqslant\{3\}}\setminus\SH_{\leqslant\{1\}}$ if $i=2,3,4,5$.

Since $J_2(G)=(0,1,0)$,
the Poincar\'e polynomial of the upper motive $\U(X_\Delta)$ of the Borel variety
equals $t^3+1$. Moreover, since $q$ has height two, $\U(X_\Delta)_{k(Q)}$ is indecomposable.

We have the following motivic decomposition over $k(Q)$:
$$\M(Q_{k(Q)})\simeq \oplus_{i=0,1,9,10}\ff_2(i)\bigoplus\oplus_{i=2}^5\U(X_\Delta)_{k(Q)}(i).$$

So, all conditions of Proposition~\ref{prpr} are satisfied and the parameter $b$ of that
Proposition equals $9$. This is a contradiction, since $9\ne 2^n-1$ for any $n$.
(In the proof of Proposition~\ref{prpr} in case $X$ is a projective quadric, one can use
\cite[Theorem~2.1]{Vi10} instead of Corollary~\ref{cor1}. Then the restriction $\car k=0$
is substituted by the restriction $\car k\ne 2$.)
\end{proof}

\begin{prop}\label{je7}
Let $G$ be an adjoint group of type $\E_7$ with $J_2(G)=(0,1,0,0)$ over a field $k$ with
$\car k\ne 2$. Then $X_7$ has a rational point.
\end{prop}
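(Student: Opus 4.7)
The plan is to argue by contradiction and reduce to Lemma~\ref{l11} via Lemma~\ref{l10}, paralleling the strategy of Proposition~\ref{prope6}. Suppose $X_7(k) = \emptyset$. Since $j_1 = 0$, \cite[Prop.~4.2]{PS10} shows that the Tits algebras of $G$ are split, so $G$ is the adjoint quotient of a simply connected inner form of split $\E_7$ and the Picard group of every projective homogeneous $G$-variety is rational. Because the $G$-variety $X_7$ coincides with the corresponding variety for the simply connected cover, Corollary~\ref{zerocy} applies and gives that $X_7$ has no zero-cycle of odd degree.

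An inspection of the Tits indices of inner type $\E_7$ with split Tits algebras forces $G$ to be anisotropic: the non-vanishing of $j_2 = 1$ combined with $j_3 = j_4 = 0$ excludes every isotropic index. The hypotheses of Lemma~\ref{l10} are thereby met, so the height of $X_1$ equals three, and by the Tits classification the semisimple anisotropic kernel of $G_{k(X_1)}$ is of Dynkin type $\D_6$. Splitness of the Tits algebras and the inner character of the $\D_6$ subsystem identify this kernel with $\Spin(q)$ for a regular 12-dimensional quadratic form $q$ over $k(X_1)$ having trivial discriminant and trivial Clifford invariant.

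To finish, one tracks the $J$-invariant through the transition $G \leadsto \Spin(q)$. By the standard compatibility of $J_2$ with passage to the semisimple anisotropic kernel over $k(X_1)$, the non-zero entry $j_2 = 1$, attached to the degree-$3$ generator of $\Ch^*(\overline G)\otimes\ff_2$, descends to the corresponding degree-$3$ generator of $\Ch^*(\overline{\Spin_{12}})\otimes\ff_2$, while the vanishing of $j_3$ and $j_4$ forces the remaining generators of the mod-$2$ Chow ring of $\overline{\Spin_{12}}$ to be rational. Hence $J_2(\Spin(q)) = (0,1,0)$, and Lemma~\ref{l11} forces $q$ to be isotropic, contradicting the anisotropy of $\Spin(q)$. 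Therefore $X_7$ has a rational point.

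The principal obstacle is the last step: tracing the $J$-invariant under the transition to the $\D_6$ subsystem requires a careful comparison of the explicit presentations of $\Ch^*(\overline{\E_7^{\adj}})\otimes\ff_2$ and $\Ch^*(\overline{\Spin_{12}})\otimes\ff_2$ (as described in \cite[Table~4.13]{PSZ}) and verification that rationality of cycles is preserved under the inclusion of the $\D_6$ subsystem. The Tits-classification argument establishing anisotropy of $G$ is a secondary, more routine obstacle.
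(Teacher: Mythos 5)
Your approach matches the paper's in broad outline (Corollary~\ref{zerocy} $\to$ Lemma~\ref{l10} $\to$ passing to $k(X_1)$ $\to$ Lemma~\ref{l11}), but there is a genuine gap in the reduction to the anisotropic case. You assert that ``an inspection of the Tits indices of inner type $\E_7$ with split Tits algebras forces $G$ to be anisotropic: the non-vanishing of $j_2=1$ combined with $j_3=j_4=0$ excludes every isotropic index.'' This is not something the Tits classification gives you for free. A group of inner type $\E_7$ with split Tits algebras and Tits index $\{1\}$ has anisotropic kernel of type $\D_6$ attached to a $12$-dimensional quadratic form with trivial discriminant and trivial Clifford invariant, and there is no a priori obstruction coming from the $J$-invariant or from the Tits table to such a $G$ having $J_2(G)=(0,1,0,0)$ while $X_7(k)=\emptyset$. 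Ruling out this configuration is exactly the content of Lemma~\ref{l11}, and the paper indeed treats ``$G$ isotropic with anisotropic kernel of type $\D_6$'' as a separate case, killed by a direct application of Lemma~\ref{l11} over $k$ before turning to the anisotropic case. Your proof silently drops that case, so as written it does not cover all groups satisfying the hypotheses.

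A secondary, smaller issue: when you ``track the $J$-invariant through the transition $G\leadsto\Spin(q)$'' over $k(X_1)$, you need first to know that $J_2(G_{k(X_1)})$ is still $(0,1,0,0)$ rather than having dropped to $(0,0,0,0)$. Since $J_2$ can only decrease under field extension, the only alternative is $(0,0,0,0)$; the paper excludes it by noting that $G_{k(X_1)}$ is not split (which follows because $X_7$ has no zero-cycle of odd degree), and only then invokes \cite[Cor.~5.19]{PSZ} to pass from $J_2(G_{k(X_1)})=(0,1,0,0)$ to $J_2(G')=(0,1,0)$ for the anisotropic kernel $G'$. Your proposal compresses these two steps into a single heuristic about ``descent of generators'' without verifying the first one. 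Both gaps are repairable with the paper's arguments, but as written the proof is incomplete.
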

\begin{proof}
Since $j_1=0$, the Tits algebras of $G$ are split.

If $G$ is isotropic with anisotropic kernel of type $\D_6$, then we get
a contradiction with Lemma~\ref{l11}, so 
assume that $G$ is anisotropic.  Then $X_7$ has no zero-cycles of odd degree by Corollary \ref{E7.7}, so by Lemma~\ref{l10} the height of $X_1$ equals $3$ and
the semisimple anisotropic kernel $G'$ of $G_{k(X_1)}$ has type $\D_6$. 

Since the $J$-invariant
is non-increasing under field extensions and since $G_{k(X_1)}$ is not split,
the $J$-invariant of $G_{k(X_1)}$ also equals $(0,1,0,0)$.
Therefore by \cite[Cor.~5.19]{PSZ} we have $J_2(G')=(0,1,0)$, and again we get a contradiction with Lemma~\ref{l11}.
\end{proof}

\begin{cor}\label{co815}
Let $C$ be a smooth projective irreducible curve over $\qq_p$, $G_0$
be a split simply-connected group of type $\E_7$ over
$\qq_p(C)$ and $\xi\in H^1(\qq_p(C),G_0)$. Then: 
$6r_{G_0}(\xi)=0$ iff $_\xi(G_0)$ is isotropic.
\end{cor}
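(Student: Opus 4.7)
The plan is to combine Proposition~\ref{prope6} with Tits's classification of indices for inner type $\E_7$ and a known symbol theorem for degree-$3$ Galois cohomology of $p$-adic function fields. Since $k = \qq_p(C)$ already has characteristic zero, no reduction via Proposition~\ref{gille} is needed.

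For the direction ``$_\xi G$ isotropic $\Rightarrow 6r_G(\xi) = 0$'', I would appeal to Tits's classification~\cite{Ti66}: besides the split and the anisotropic indices, the only isotropic Tits indices for inner type $\E_7$ are the one with circled node $\{7\}$ (anisotropic kernel $\E_6$) and the one with circled nodes $\{1,6,7\}$ (anisotropic kernel $3\A_1$). In every isotropic case node~$7$ lies in the Tits index, so the $_\xi G$-variety $X_7$ has a rational point, and the implication \eqref{prope6.var}$\Rightarrow$\eqref{prope6.rost} of Proposition~\ref{prope6} yields $6r_G(\xi) = 0$.

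For the converse, I would first observe that the Dynkin index of simply connected $\E_7$ is $12$, so we may decompose $r_G(\xi) = r_2 + r_3$ with $r_2$ of order dividing $4$ and $r_3$ of order dividing $3$. The hypothesis $6r_G(\xi) = 0$ is then equivalent to $r_2 \in H^3(k, \Z/2\Z(2))$. Next I would invoke the theorem of Parimala and Suresh showing that every class in $H^3(\qq_p(C), \Z/2\Z(2))$ is a symbol when $p$ is odd; the case $p = 2$ can be handled via an analogue using the residue sequence~\eqref{residue.seq} to reduce to the residue field. Thus the mod-$2$ component of $r_G(\xi)$ is a symbol, condition~\eqref{prope6.rost} of Proposition~\ref{prope6} holds, $X_7$ acquires a rational point, and $_\xi G$ is isotropic.

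The hardest part will be the uniform verification of the symbol theorem for $H^3(\qq_p(C), \Z/2\Z(2))$ in all residue characteristics of $\qq_p$; the case $p = 2$ lies outside the original Parimala--Suresh setting and requires more delicate handling.
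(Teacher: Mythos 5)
Your plan matches the paper's in one direction but has a real error in the other, and a misplaced worry in between.

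\textbf{The ``if'' direction.} Your argument is based on the claim that every isotropic Tits index for inner type $\E_7$ circles node $7$. This is false, and the paper itself relies on a counterexample: the valid nontrivial Tits indices for $\E_7$ (see~\cite{Ti66}) circle $\{1\}$ (anisotropic kernel $\D_6$), $\{7\}$ ($\E_6$), $\{1,6\}$ ($\D_4\times\A_1$), $\{1,6,7\}$ ($\D_4$), or $\{1,3,4,6\}$ ($3\A_1$), and in three of these five node $7$ is \emph{not} circled. (The index circling only node $1$, with anisotropic kernel $\D_6$, is used repeatedly in Lemma~\ref{l10} and Proposition~\ref{prope6}.) Also, circling $\{1,6,7\}$ gives anisotropic kernel $\D_4$, not $3\A_1$ as you state. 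So you cannot conclude via Proposition~\ref{prope6}\eqref{prope6.var} that $X_7(k)\ne\emptyset$; the ``if'' part needs a direct argument, e.g.\ that in each isotropic case $\xi$ factors through a Levi whose anisotropic kernel has Rost invariant of order dividing $6$ (order $6$ for the $\E_6$ kernel, a power of $2$ dividing $2$ for the others), so $6r_G(\xi)=0$.

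\textbf{The ``only if'' direction.} Your reduction to ``$r_2$ is a symbol, then apply Proposition~\ref{prope6}'' is exactly the paper's argument. However, your worry about $p=2$ is partly misdirected. Since $\qq_p(C)$ has characteristic~$0$ for \emph{every} prime $p$, the relevant group is plain Galois cohomology $H^3(\qq_p(C),\Z/2\Z)$ in all cases; there is no characteristic-$2$ subtlety. What is true is that \cite[Th.~3.9]{PaSu98} assumes $p\ne 2$; the paper handles $p=2$ simply by citing the Heath-Brown--Leep bound on $u(\qq_p(C))$, which holds uniformly in $p$. Your proposed fix via the residue sequence~\eqref{residue.seq} would not work as stated: that sequence applies to a complete discretely valued field with a designated residue field, and $\qq_p(C)$ is not one, so there is no single ``residue field'' to reduce to. The clean fix is just to cite Heath-Brown--Leep.

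You are correct that no reduction via Proposition~\ref{gille} is needed here, since $\qq_p(C)$ is already of characteristic zero.
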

\begin{proof}
Note first that the order of the Rost invariant $r_{G_0}$ is $12$ and that ``if" is easy.

Assume $6r_{G_0}(\xi) = 0$.  Then the mod-$4$ component of the Rost invariant of $\xi$ lies in $H^3(k(C),\zz/2)$ and 
so is a symbol by \cite[Th.~3.9]{PaSu98} or \cite{Leep}.
Theorem \ref{prope6} gives that $_\xi(G_0)$ is isotropic.
\end{proof}

We remark that, using the theory of Bruhat-Tits buildings J.~Tits shows in
\cite[Proposition~2(B)]{Ti90} that there is an anisotropic group of type
$\E_7$ with trivial Tits algebras over $\qq_p(t)$. That is, there exists
$\xi\in H^1(\qq_p(t),G_0)$ such that $_\xi(G_0)$ is anisotropic.

\begin{thm}\label{pro816}
Let $G_0$ be a split, simply connected group of type $\E_7$
over a field $k$, $\xi\in H^1(k,G_0)$, and $G={}_\xi(G_0)$. 
The following conditions are equivalent
\begin{enumerate}
\item \label{pro816.rost}
there is an odd-degree
extension $L/k$ so that $r_{G_0}(\xi_L)$ is a sum of two symbols in $H^3(L,\zz/2\zz(2))$ with a common slot;
\item \label{pro816.var} the $G$-variety $X_1$ has a zero-cycle of odd degree;
\item \label{pro816.3} $G$ becomes isotropic over an odd-degree extension of $k$.
\end{enumerate}
\end{thm}

\begin{proof}
The implication \eqref{pro816.var} $\Rightarrow$ \eqref{pro816.3} is trivial and \eqref{pro816.3} $\Rightarrow$ \eqref{pro816.rost}
is \cite[p.~70, Prop.~A.1]{G:lens}.  So assume \eqref{pro816.rost}; we prove \eqref{pro816.var}, 
We may replace $k$ with $L$ and so assume that $r_{G_0}(\xi)$ is a sum of two symbols in $H^3(k, \zz/2\zz(2))$.  We may assume that $\car k = 0$ by Proposition \ref{gille}.

The $J$-invariant $J_2(G)$ is $(0,0,0)$, $(1,0,0)$, $(1,1,0)$,
or $(1,1,1)$ because $G$ is simply connected. In the first two cases, $X_7$
has a rational point (by Cor.~\ref{E7.7} and Prop.~\ref{je7} respectively),
hence \eqref{pro816.var}. So we can assume that $J_2(G)=(1,1,j_3)$
for some $j_3$.  

By hypothesis, there is a regular quadratic form $q$ over $k$ of dimension $12$
whose Arason invariant equals $r_{G_0}(\xi)$.  We assume that $q$ is anisotropic, for otherwise $r_{G_0}(\xi)$ is a symbol and $X_7$ has a rational point by Theorem \ref{prope6}, hence \eqref{pro816.var}.  
We denote the corresponding
projective quadric by $Q$. 
Over $k(X_7)$ the Rost invariant $r_{G_0}(\xi)$ is a symbol, hence 
the form $q$ is isotropic over $k(X_7)$. Conversely,
the Rost invariant of $\xi$ over $k(Q)$ is a symbol, so 
by Theorem \ref{prope6} $X_7$ has a $k(Q)$-point.  Therefore the upper motives $\U(X_7)$ and $\U(Q)$ are
isomorphic. Moreover, $\M(Q)\simeq\U(Q)\oplus\U(Q)(1)$.
Therefore, since $X_7$ has height $2$, we have 
\[
\M(X_7)\simeq\U(Q)\oplus\U(Q)(1)\oplus\U(Q)(17)\oplus\U(Q)(18)\oplus
\oplus_{i\in I}\U(X_\Delta)(i),
\]
where $I$ is some multiset of indexes. The Poincar\'e polynomial of $\U(X_\Delta)$
equals $$(t^3+1)(t^5+1)(t^9+1)^{j_3},$$
and $P(X_7,t)-(1+t+t^{17}+t^{18})P(\U(Q),t)$ is divisible by $P(\U(X_\Delta),t)$.
An easy computation shows then that $j_3=0$.

Consider now the variety $X_1$ over $K:=k(X_1)$.
A direct computation using \cite[Theorem~7.5]{CGM} gives the following
decomposition over $K$:
\[
\M(X_1)\simeq\ff_2\oplus \M(X'_3)(1)\oplus \M(X'_6)(8)\oplus \M(X'_3)(17)\oplus\ff_3(33),
\]
where $X'_3$ and $X'_6$ are $\Spin(q)$-homogeneous varieties of types $3$ and $6$
(here the enumeration of simple roots comes from the embedding $\D_6<\E_7$, i.e.,
$X'_3$ is a connected component of the maximal orthogonal Grassmannian and $X'_6$
is the variety of isotropic planes).

The variety $X'_3$ is generically split. Therefore $\M(X'_3)$ is
a direct sum over $k$ of Tate twists of the motive $\U(X_\Delta)$.
The variety $X'_6$ has height $2$ and is a direct sum over $k$ of Tate shifts of
the motives $\U(X_\Delta)$ and $\U(Q)$.

But $J_2(G_{k(X_1)})=(1,1,0)$ by Lemma~\ref{l10} and Proposition~\ref{je7}.
Therefore the motives $\U(X_\Delta)_{k(X_1)}$ and $\U(Q)_{k(X_1)}$ are indecomposable.            
If $X_1$ has no zero-cycles of odd degree, then we can apply Proposition~\ref{prpr}, which gives  a contradiction,
since $33\ne 2^n-1$ for any $n$.  Therefore, \eqref{pro816.rost} $\Rightarrow$ \eqref{pro816.var}.
\end{proof}

\begin{rem}
For $G = {_\xi}(G_0)$ of type $\E_7$, it is known that (1) for any extension $L$ of $k$ such that $X_7(L) \ne \emptyset$, we have $\res_{L/k}(4r_{G_0}(\xi)) = 0$ iff $X_1 \times L$ has a zero-cycle of degree not divisible by 3 (by \cite{Rost:CR} and Prop.~\ref{gille}) and (2) there exists a separable extension $K$ of $k$ of dimension 1 or 2 such that $X_7(K) \ne \emptyset$ (by \cite[12.13]{G:lens} and Prop.~\ref{gille}).  As $4r_{G_0}(\xi) \in H^3(k, \Z/3\Z(2))$, combining these observations gives: \emph{$X_1$ has a zero-cycle of degree 1 iff $X_1$ has a zero-cycle of odd degree and $4r_{G_0}(\xi) = 0$.}
\end{rem}

Table \ref{e7.dict} summarizes what we have proved about the relationships between the Rost invariant,
and the Tits index for groups of type $\E_7$ at the prime $2$; it also gives a description of $J$-invariant for simply connected groups of type $\E_7$ for fields of characteristic $0$.  The equivalence
for the $J$-invariant $(1,1,0)$ follows from the proof of Theorem~\ref{pro816}.

\begin{table}[hbt]
\[
\begin{array}{c|cccc}
J_2(G)&(0,0,0)&(1,0,0)&(1,1,0)&(1,1,1) \\ \hline
\rb{\text{Tits index of $G$}}&\rb{\text{split}}&
\begin{picture}(7,1.9)
    \multiput(1,0.7)(1,0){6}{\circle*{\darkrad}}
    \put(1,0.7){\circle{\lrad}}
    \put(5,0.7){\circle{\lrad}}
      \put(6,0.7){\circle{\lrad}}
     \put(3,1.45){\circle*{\darkrad}}

    \put(1,.7){\line(1,0){5}}
    \put(3,1.45){\line(0,-1){0.75}}
\end{picture}&
\begin{picture}(7,1.9)
    \multiput(1,0.7)(1,0){6}{\circle*{\darkrad}}
    \put(1,0.7){\circle{\lrad}}
     \put(3,1.45){\circle*{\darkrad}}

    \put(1,.7){\line(1,0){5}}
    \put(3,1.45){\line(0,-1){0.75}}
\end{picture}
&\rb{\text{anisotropic}}\\
r_{G_0}(\xi)&0&\parbox{1.3in}{nonzero symbol in $H^3(K, \Z/2\Z(2))$}
&\parbox{1.5in}{sum of two symbols with a common slot in $H^3(K, \Z/2\Z(2))$}&\text{otherwise}
\end{array}
\]
\caption{Dictionary relating the mod-$2$ $J$-invariant of $G$, the Tits index
of $G$ over a $2$-closure $K$ of $k$, and the Rost invariant $r_{G_0}(\xi_K)$,
for $G_0$ split simply connected of type $\E_7$.} \label{e7.dict}
\end{table}

\begin{rem}
For completeness' sake, we mention the analogous results for a group $G$ of type $\E_7$ at the prime $3$.  (The case of primes $> 3$ being trivial.)
There is an extension $L$ of $k$ of degree not divisible by $3$ over which $G$ has trivial Tits algebras and
the homogeneous variety $X_7$ has a rational point \cite[13.1]{G:lens}.
It follows that the mod-$3$ component of $r(G_L)$ is a symbol.  The mod-$3$ component of $r(G_L)$ is zero iff
$X_{1,6,7}$ has an $L$-point.
\end{rem}

\subsection{Type $\E_8$.}  

Recall the following known result:

\begin{prop}\label{kere8}
Let $G_0$ be a split group of type $\E_8$ over a field $k$, $\xi\in H^1(k,G_0)$,
$G={}_\xi(G_0)$ and $p$ an odd prime.
If the mod-$p$ component of $r_{G_0}(\xi)$ is zero,
then $G$ is split over a field extension of degree not divisible by $p$.
\end{prop}

The proposition is trivial for $p \ge 11$ and $p = 7$ amounts to noting that $7$ does not divide $120$, see \cite[p.~1135]{Ti:deg}.  The cases $p = 3, 5$ are more substantial and are the main results of two papers of Chernousov, see \cite{Ch:mod5} or \cite[Prop.~15.5]{G:lens} for the mod-5 case and \cite{Ch:mod3} for the mod-3 case.    We give a short proof of the $p = 3$ case using the methods of this paper.

\begin{proof}[Proof of Prop.~\ref{kere8} for $p = 3$]
By Proposition~\ref{gille} we can assume that $\car k=0$. Replacing $k$ by an extension 
of degree coprime to $3$, we can assume that the Rost invariant $r_{G_0}(\xi)$ is zero.
 
Consider the variety $X$ of parabolic subgroups of $G$ of type $7$. By the
classification of Tits indexes, $G$ has a parabolic of type $8$ over $k(X)$,
hence the semisimple anisotropic kernel of $G_{k(X)}$ is contained in
a simply connected subgroup of type $\E_6$.  But the Rost invariant of the split $\E_6$ has zero kernel, so it follows that $X$ is generically split.

Therefore by \cite[Th.~5.7]{PS10} $J_3(G)=(0,0)$, hence by
\cite[Prop.~3.9(3)]{PS10} $G$ splits over a field extension of degree coprime to $3$.
\end{proof}

The conclusion of Prop.~\ref{kere8} is false in general for the omitted prime $p = 2$, e.g., in case $k = \mathbb{R}$. For $p = 2$, one needs to inspect also the degree 5 invariant constructed in \cite{Sem09}.

\begin{lem}\label{ee8}
Let $G$ be a group of type $\E_8$ over a field $k$ with $\car k=0$.                 
If $J_3(G)=(1,0)$, then $X_8$ is isotropic over a field extension of degree coprime to $3$.
\end{lem}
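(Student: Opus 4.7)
The plan is to parallel the proof of Lemma~\ref{l9} for $\E_6$, arguing by contradiction via Proposition~\ref{prpr} applied to $X = X_8$ and $p = 3$. So I would suppose, for contradiction, that every zero-cycle on $X_8$ has degree divisible by $3$, and aim to derive a contradiction by computing the parameter $b$ predicted by Proposition~\ref{prpr}(3) and verifying that the answer is not of the form $(3^n-1)/2$.

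The first step is to analyze $\M(X_{8,K})$ over $K = k(X_8)$. Since $J_3$ is non-increasing under field extensions and the variety $X_8$ should not be generically split at $p=3$ when $J_3(G)=(1,0)$ (an analog of the fact for $\E_6/P_2$ used in Lemma~\ref{l9}), one gets $J_3(G_K) = (1,0)$. Combining the Chernousov--Gille--Merkurjev decomposition \cite[Thm.~7.5]{CGM} with this constraint, I would write
\[
\M(X_{8,K}) \simeq \bigoplus_{i \in I} \ff_3(i) \oplus \bigoplus_{j \in J} N_j,
\]
where each $N_j$ is a Tate shift of the generalized Rost motive $R$ with Poincar\'e polynomial $1 + t + t^2$ associated to $J_3=(1,0)$ for $\E_8$. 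This motive $R$ is defined over $k$ and remains indecomposable over $K$, verifying hypotheses~(1) and~(2) of Proposition~\ref{prpr}. The real content here is enumerating the index set $I$ of Tate summands.

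Next I would compute $Q(t)$, the Poincar\'e polynomial of the rational first-shell cycles in $\Ch^*(\overline{X_8})$. Since $\E_8$ has trivial center, all its Tits algebras are split, so the Picard generator $h \in \Ch^1(\overline{X_8})$ is rational and lies in the first shell; using Section~\ref{secSteenrod} one identifies precisely which higher degree rational cycles also lie in the first shell. Once $I$ and $Q(t)$ are determined, the polynomial identity $\sum_{i \in I} t^i / Q(t) = 1 + t^b + t^{2b}$ required by hypothesis~(3) of Proposition~\ref{prpr} extracts a specific value of $b$. One then verifies that $b \notin \{(3^n - 1)/2 : n \ge 1\} = \{1, 4, 13, 40, 121, \ldots\}$, contradicting the conclusion of Proposition~\ref{prpr}.

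The main obstacle will be the explicit computation in the first step: determining the Tate summand index set $I$ for the CGM decomposition of $(\E_8, P_8)$. Unlike the $\E_6/P_2$ case, where the Tate indexes $\{0, 1, 10, 11, 20, 21\}$ arise directly, enumerating Tates for $(\E_8, P_8)$ requires a careful analysis of the double-coset space $W_{P_8} \backslash W / W_{P_8}$ and how each coset representative interacts with the Tits index of $G_K$. The algorithms of Section~\ref{secSteenrod}, combined with the identification of the non-Tate summands with shifts of the single Rost motive $R$ defined over $k$ (analogous to Corollary~\ref{cor10} but for $\E_8$), should make this tractable.
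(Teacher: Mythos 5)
Your plan is essentially the paper's proof: contradiction via Proposition~\ref{prpr} applied to $X_8$ over $k(X_8)$, using the Picard generator (rational because $\E_8$ has trivial Tits algebras) to pin down the first shell, and deriving a contradiction from the value of $b$. The paper cites \cite[Th.~5.7(8)]{PS10} to get that $X_8$ is not generically split and has height $2$, then reads off the Tate indices $\{0,1,28,29,56,57\}$ from the CGM decomposition, so that $\sum_{i\in I}t^i = (1+t)(1+t^{28}+t^{56})$, $Q(t)=1+t$, $b=28$, and $28\neq\tfrac{3^n-1}{2}$.

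One error worth flagging: you assert the non-Tate summands are shifts of a Rost motive with Poincar\'e polynomial $1+t+t^2$. That is the $\E_6$ case, where the mod-$3$ Chow ring of $\overline G$ has generators in degrees $(d_1,d_2)=(1,4)$; for $\E_8$ at $p=3$ the degrees are $(4,10)$, so the upper motive $N$ of $X_\Delta$ for $J_3=(1,0)$ has Poincar\'e polynomial $1+t^4+t^8$ (this is used explicitly in the paper's proof of Proposition~\ref{E8symb}). The slip is harmless here, since only the Tate index set $I$ and the fact that all $N_j$ are shifts of one indecomposable motive defined over $k$ enter the contradiction — but the numbers you would obtain for $J$ and for the degree-count bookkeeping would be wrong if you used $1+t+t^2$. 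You also leave the enumeration of $I$ (hence the determination of $b=28$) to a future CGM computation; that step is the whole arithmetic content and must be carried out before the contradiction is actually obtained.
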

\begin{proof}
Assume that $X_8$ has no zero-cycles of degree coprime to $3$.
By \cite[Theorem~5.7(8)]{PS10} $X_8$ is not generically split.
Therefore, since $J_3(G)=(1,0)$, the motive $\U(X_\Delta)_{k(X_8)}$ is indecomposable.

We have the following motivic decomposition over $k(X_8)$:
$$\M(X_8)\simeq\oplus_{i=0,1,28,29,56,57}\ff_3(i)\oplus \left( \oplus_{j\in J}\U(X_\Delta)(j) \right)$$
for some multiset of indexes $J$.

Moreover, the Picard group of $X_8$ is rational, since the Tits algebras of $G$ are split. It follows that
the (unique) generator of the Picard group lies in the first shell.
This leads to a contradiction with Proposition~\ref{prpr}, since $28\ne (3^n-1)/2$ for any $n$.
\end{proof}

With Lemma~\ref{ee8} in hand, we can significantly strengthen Prop.~\ref{kere8}
by giving criteria for $r_{G_0}(\xi)$ to be a symbol over an extension of
degree not divisible by some odd prime $p$. For $p \ge 5$, this happens for
every $\xi$ (see \cite[14.7, 14.13]{G:lens} for the case $p=5$).
For $p = 3$, we have:

\begin{thm} \label{E8symb}
Let $G_0$ be a split group of type $\E_8$ over a field $k$, $\xi\in H^1(k,G_0)$,
and $G={}_\xi(G_0)$.
The following conditions are equivalent:
\begin{enumerate}
\item \label{E8symb.1} $r_{G_0}(\xi)$ is a symbol over a field extension of degree coprime to $3$;
\item \label{E8symb.2} The $G$-homogeneous variety $X_{7,8}$ is isotropic over a field extension of degree coprime to $3$;
\item \label{E8symb.3} $G$ is isotropic over a field extension of degree coprime to $3$.
\end{enumerate}
\end{thm}

\begin{proof}
We assume  \eqref{E8symb.1} and prove \eqref{E8symb.2}.
Without loss of generality we can assume that the even and the mod-$5$ components of
the Rost invariant of $\xi$ are zero.

By Proposition~\ref{gille} we can assume that $\car k=0$.
Assume $r_{G_0}(\xi)$ is a symbol over a field extension of degree coprime to $3$; by Prop.~\ref{kere8}
we can assume that it is not zero. Consider its generic splitting variety $D$. The upper motive of $D$ is
a generalized Rost motive $R$ with Poincar\'e polynomial $1+t^4+t^8$
(see e.g. \cite{NSZ}).

Let $X_\Delta$ denote the variety of Borel subgroups of $G$.
Then it is obvious that $R$ splits over $k(X_\Delta)$. On the other hand,
the kernel of the Rost invariant is trivial modulo $3$ by Prop.~\ref{kere8}.
Therefore the upper motives of $D$ and $X_\Delta$ are isomorphic.
Thus, $J_3(G)=(1,0)$. By Lemma~\ref{ee8} $X_8$ is isotropic over a field
extension $L$ of degree coprime to $3$. But then $X_7$ is also isotropic over
an extension of $L$ of degree dividing $2$.

Finally, \eqref{E8symb.2} obviously implies \eqref{E8symb.3}, and \eqref{E8symb.3}
implies \eqref{E8symb.2} by the classification of possible Tits indexes in
\cite{Ti66}.  Property \eqref{E8symb.2} easily implies \eqref{E8symb.1}.
\end{proof}

\begin{rem}
If one attempts to sharpen the theorem by deleting the text ``over a field extension of degree coprime to 3'',
the implication $(2)\Rightarrow(1)$ still holds but $(1)\Rightarrow(2)$ fails.
Indeed, for $\xi \in H^1(\mathbb{R}, G_0)$ such that $_\xi(G_0)$ is the compact $\E_8$, $r_{G_0}(\xi)$ is zero.
\end{rem}

Table~\ref{e8.dict} summarizes what we have proved about the relationship
between the Rost invariant, $J$-invariant, and Tits indexes for groups of type
$\E_8$ at the prime $3$. Note that we proved the
equivalent description for Tits indexes and the Rost invariant over fields of arbitrary characteristic,
and the equivalent description for the $J$-invariant only for fields of characteristic $0$.

Note also that for every $k$, there is a versal torsor
$\xi \in H^1(K, G_0)$ for some extension $K/k$, and that for such a $\xi$,
$J_3(G)$ is maximal \cite[p.~1036]{PSZ}, i.e., $(1,1)$.
By Theorem \ref{E8symb}, the $3$-torsion part of $r_{G_0}(\xi)$ is not a symbol in $H^3(L, \Z/3\Z(2))$ for every extension $L/K$ of degree not divisible by $3$.

\begin{table}[hbt]
\begin{tabular}{c|ccc}
$J_3(G)$&$(0,0)$&$(1,0)$&$(1,1)$\\ \hline
\rb{Tits index of $G$}&\rb{split}&
\begin{picture}(8,1.9)
    \multiput(1,0.7)(1,0){7}{\circle*{\darkrad}}
    \multiput(6,0.7)(1,0){2}{\circle{\lrad}}
    \put(3,1.45){\circle*{\darkrad}}

    \put(1,.7){\line(1,0){6}}
    \put(3,1.45){\line(0,-1){0.75}}
\end{picture}
&\rb{anisotropic}\\
$r_{G_0}(\xi)$&$0$&nonzero symbol&otherwise
\end{tabular}
\caption{Dictionary relating the mod-$3$ $J$-invariant of $G$, the Tits index
of $G$ over a $3$-closure $K$ of $k$, and the Rost invariant of $r_{G_0}(\xi_K)$,
for $G_0$ split of type $\E_8$.} \label{e8.dict}
\end{table}

\begin{cor} \label{E8.QpC}
Let $C$ be a smooth projective irreducible curve over $\qq_p$ with $p\ne 3$.
If $G$ is a group of type $\E_8$ over $\qq_p(C)$, then the $G$-variety $X_{7,8}$ is isotropic over a field extension of
degree coprime to $3$.
\end{cor}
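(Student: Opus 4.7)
The plan is to verify condition~(1) of Proposition~\ref{E8symb} and then conclude via the implication $(1)\Rightarrow(2)$ of the same proposition. Write $k := \qq_p(C)$ and decompose $r_G(\xi) \in H^3(k,\qq/\zz(2))$ as $r_2 + r_3 + r_5$ according to its $\ell$-primary components; these three primes suffice because the Rost multiplier of $\E_8$ is $60$.

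First I would pass to a prime-to-$3$ closure $k^{(3)}$ of $k$, i.e.\ the fixed field of a pro-$3$-Sylow subgroup of $\mathrm{Gal}(\ksep/k)$. Since $\mathrm{Gal}(\ksep/k^{(3)})$ is pro-$3$ and the Galois modules $\mu_{2^m}^{\otimes 2}$, $\mu_{5^m}^{\otimes 2}$ have order coprime to $3$, a standard restriction-corestriction argument gives $H^i(k^{(3)},\mu_{\ell^m}^{\otimes 2})=0$ for $i\ge 1$ and $\ell\in\{2,5\}$. Thus $r_2$ and $r_5$ vanish over $k^{(3)}$, and by finiteness of these classes the vanishing is already realized over some finite subextension $L/k$ of degree coprime to $3$.

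Over $L$ we have $r_G(\xi_L) = r_3|_L \in H^3(L,\Z/3\Z(2))$. Since $p\ne 3$, the symbol-length-one theorem for $H^3(\qq_p(C),\mu_3^{\otimes 2})$---the mod-$3$ analog of the Parimala--Suresh result \cite{PaSu98} invoked in the proof of Corollary~\ref{co815}---shows that every element of $H^3(k,\mu_3^{\otimes 2})$ is a symbol. Hence $r_3$ is a symbol over $k$, so $r_3|_L$ is a symbol over $L$ and $r_G(\xi_L)$ is a symbol. Proposition~\ref{E8symb} then gives that the ${}_\xi G$-variety $X_{7,8}$ has a rational point over $L$, and $[L:k]$ is coprime to $3$ by construction.

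The main obstacle is justifying the mod-$3$ symbol-length input for $H^3$ of the function field of a $p$-adic curve; this is the only place the hypothesis $p\ne 3$ enters essentially, and one must cite a result (such as later work of Parimala--Suresh or Saltman) that extends \cite{PaSu98} to mod-$3$ coefficients.
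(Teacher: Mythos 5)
Your approach matches the paper's: invoke a symbol-length result for $H^3$ of a $p$-adic curve to verify condition \eqref{E8symb.1} of Proposition~\ref{E8symb}, then apply the implication $(1)\Rightarrow(2)$. The paper cites \cite[Theorem~3.5]{PaSu10} for the key input, which is exactly the ``later work of Parimala--Suresh'' you anticipate needing. Two small cautions. First, the cited result gives only that each class in $H^3(\qq_p(C),\Z/3)$ becomes a symbol \emph{after a coprime-to-$3$ extension}, not that it is already a symbol over $k$ as you assert; this weaker statement still suffices, since you may enlarge $L$ by a further coprime-to-$3$ extension without harm. Second, Proposition~\ref{E8symb} concludes that $X_{7,8}$ is isotropic over \emph{some} coprime-to-$3$ extension of the base, not over $L$ itself, so your final sentence should compose one more extension; again this changes nothing since degrees coprime to $3$ are closed under composition. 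Your explicit pass to the prime-to-$3$ closure to kill $r_2$ and $r_5$ is sound and is what the paper does implicitly (the WLOG step inside the proof of Proposition~\ref{E8symb} absorbs it), so your write-up is in fact a bit more careful than the paper's two-sentence version.
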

\begin{proof}
By \cite[Theorem~3.5]{PaSu10} each element in $H^3(\qq_p(C),\zz/3)$ is a 
symbol over a field extension of degree coprime to $3$. Therefore by
Theorem \ref{E8symb} the variety $X_{7,8}$ is isotropic over a field extension
of degree coprime to $3$.
\end{proof}

\medskip

{\small\noindent{\textbf{Acknowledgements.}}
The authors sincerely thank Charles De Clercq, Wilberd van der Kallen, Geordie Williamson, Maksim Zhykhovich,
and the referees for their remarks.
The first author's research was partially supported by NSF grant DMS-1201542, NSA grant H98230-11-1-0178, Emory University, and the Charles T.~Winship Fund.
The second and the third authors gratefully acknowledge the support of the MPIM Bonn and of
the SFB/Transregio 45 Bonn-Essen-Mainz.
The second author was also supported by JSC ``Gazprom Neft'' and RFBR 13-01-91150, 13-01-00429, 13-01-92699.}

\bibliographystyle{chicago}

\end{document}